\documentclass{article}
\usepackage{times, amsmath, amsthm, amssymb, graphicx,mathrsfs, amsfonts}
\input xy
\xyoption{all}
\setlength{\parindent}{0pt}

\setlength{\textwidth}{6.5in}
%\addtolength{\hoffset}{-2cm}
\setlength{\textheight}{8in}
\setlength{\voffset}{.5in}
\setlength{\footskip}{.5in}
\setlength{\topmargin}{0pt}
\setlength{\headheight}{0pt}
\setlength{\headsep}{0pt}
\setlength{\marginparsep}{0pt}
\setlength{\marginparwidth}{0pt}
\setlength{\oddsidemargin}{0pt}

\theoremstyle{definition}
\newtheorem{thm}{Theorem}[subsection]
\newtheorem{cor}[thm]{Corollary}
\newtheorem{lem}[thm]{Lemma}
\newtheorem{pro}[thm]{Proposition}

\newtheorem{con}[thm]{Conjecture}

\newtheorem{dfc}[thm]{Definition-Construction}

\newtheorem{df}[thm]{Definition}
\newtheorem{rem}[thm]{Remark}
\newtheorem{exa}[thm]{Example}

\newcommand{\mmA}{\mathscr{A}}
\newcommand{\mmB}{\mathscr{B}}

\newcommand{\mmM}{\mathscr{M}}

\newcommand{\mB}{\mathcal{B}}
\newcommand{\mD}{\mathcal{D}}
\newcommand{\mC}{\mathcal{C}}
\newcommand{\mA}{\mathcal{A}}

\newcommand{\mO}{\mathcal{O}}

\newcommand{\mM}{\mathcal{M}}
\newcommand{\mI}{\mathcal{I}}
\newcommand{\mJ}{\mathcal{J}}
\newcommand{\mF}{\mathcal{F}}

\newcommand{\mL}{\mathcal{L}}
\newcommand{\mS}{\mathcal{S}}
\newcommand{\mT}{\mathcal{T}}
\newcommand{\mEnd}{\mathcal{E}nd}

\newcommand{\bD}{\mathbb{D}}

\DeclareSymbolFont{AMSb}{U}{msb}{m}{n}
\DeclareMathSymbol{\boldk}{\mathord} {AMSb}{"7C}

\newcommand{\git}{/ \hspace{-3pt} / \hspace{-1pt}}
\begin{document}
\title{\addtocounter{footnote}{1} \'Etale Splittings of Certain Azumaya Algebras on Toric and Hypertoric Varieties in Positive Characteristic}
\author{Theodore J. Stadnik, Jr. \footnote{Author partially supported by NSF grant DMS-0636646}\\
	Northwestern University \\}
\date{\today}
\maketitle

\begin{abstract}
    \centering
    \begin{minipage}{0.6\textwidth}
    For a smooth toric variety $X$ over a field of positive characteristic, a $T$-equivariant \'{e}tale cover $Y \rightarrow T^*X^{(1)}$ trivializing the sheaf of crystalline differential operators on $X$ is constructed.  This trivialization is used to show that $\mD$ is a trivial Azumaya algebra along the fibers of the moment map $\mu: T^*X^{(1)} \rightarrow \mathfrak{t}^{*(1)}$.  This result is then extended to certain Azumaya algebras on hypertoric varieties, whose global sections are analogous to central reductions of the hypertoric enveloping algebra.  A criteria for a derived Beilinson-Bernstein localization theorem is then formulated.
    \end{minipage}
\end{abstract}

\tableofcontents

\section{Introduction}

\hspace{10pt} In the seminal article \cite{MVdB}, I. Mussen and M. Van den Bergh study central reductions of the hypertoric enveloping algebra in characteristic zero.  The following article will construct these central reductions in positive characteristic as the global sections of a sheaf of Azumaya algebras on the corresponding hypertoric variety.  It is shown that this Azumaya algebra is split along the fibers of the natural resolution of singularities $\mmM \rightarrow \mmM_0$.  This gives a hypertoric variant of a result by R. Bezrukavnikov, I. Mirkovic, and D. Rumynin (\cite[5.1.1]{BMR}).  The work of Braden-Licata-Proudfoot-Webster (\cite{PW}) and Bellamy-Kuwabara (\cite{BeK}) demonstrates there is a good theory of category $\mO$ and localization in characteristic 0.  Prompted by this fact, the formalism of \cite{BMR} is combined with the positive characteristic Grauert-Riemenschneider theorem to prove that a derived Beilinson-Bernstein theorem holds whenever the localization functor has finite homological dimension.  In order to accomplish these theorems, it is first necessary to consider the sheaf of differential operators on a smooth toric variety.\\

\hspace{10pt} The sheaf of crystalline differential operators, $\mD$, on a smooth variety $X/\mathbb{F}_p$ has traditionally been studied in number theory.  It was classically known that $\mD$ is a trivial Azumaya algebra when restricted to the zero section $X^{(1)} \rightarrow T^*X^{(1)}$.  This fact is extensively used in the study of Grothendieck's sheaf of differential operators $\bD$ (See \cite{Ha}). Until the appearance of \cite{BMR}, this was the most popular application of the theory.  In \cite{BMR}, it was shown that $\mD$ is an Azumaya algebra over the space $T^*X^{(1)}$ when $X$ is a smooth variety over an algebraically closed field of characteristic $p$.  They also construct a flat (but wildly ramified) cover $T^{*,1}X \rightarrow T^*X^{(1)}$ which trivializes $\mD$.  Based on a choice of a splitting for the Cartier operator, A. Ogus and V. Vologodsky (\cite{OV}) succeeded in trivializing $\mD$ on an \'etale cover.  The authors of \cite{BMR} also discovered that $\mD_{G/B}$ restricted to the fibers of the moment map $\mu: T^*{G/B}^{(1)} \rightarrow \mathfrak{g}^{*(1)}$ is trivial for $G$ semi-simple and all $p$ large enough.  In the same paper it is also shown there is an equivalence of categories $D^b(U(\mathfrak{g})^{\lambda}) \cong D^b(\mD^{\lambda}_{G/B})$ for regular weights $\lambda$, a derived version of Beilinson-Bernstein localization.\\

\hspace{10pt} This paper will look to generalize these results in the case of toric and hypertoric varieties, using only linear algebra and the construction of the flat cover.  In the case that $T \subset X$ is a toric variety, a $T$-equivariant \'etale cover $Y \rightarrow T^*X^{(1)}$ which $T$-equivariantly trivializes $\mD$ is constructed.  It is based upon the observation that the Euler operator $x\partial_x$ on $\mathbb{A}^1$ satisfies the Artin-Schreier equation (with $\mO_{T^*X^{(1)}}$-coefficients) $p(W)=W^p-W-x^p\partial^p$ and is $T$-invariant.  Its relation to the work of Ogus-Vologodsky is discussed in remark \ref{OVex}.  A slightly different description of $Y$ is used to proves that $\mD$ is trivial when restricted to the fibers of the moment map $\mu: T^*X^{(1)} \rightarrow \mathfrak{t}^{*(1)}$.  This construction is then used to create analogous \'etale covers splitting certain Azumaya algebras on hypertoric varieties.  The global sections of these Azumaya algebras are the characteristic $p$ analogues of the algebras studied in \cite{PW}, which the authors show have a good theory of category $\mO$.  Given that characteristic $p$ representation theory of a semi-simple Lie group loosely mimics category $\mO$ in characteristic $0$, it is reasonable to expect there to be a derived Beilinson-Bernstein localization theorem. \\

\hspace{10pt}The layout of this article is as follows. The third section contains basic linear algebra computations at points of $T^*{\mathbb{A}^n}^{(1)}$.  The linear algebra is organized so that most of the results from the subsequent subsections require little effort to prove.  These linear algebra computations are then used to prove a certain $T$-equivariant irreducible \'etale cover $Y \rightarrow T^*{\mathbb{A}^n}^{(1)}$ trivializes $\mD$.  This construction is then generalized to an arbitrary smooth toric variety and used to show $\mD$ is trivial on the fibers of the moment map.  In section four, Azumaya algebras on hypertoric varieties are constructed.  The global sections of these Azumaya algebras are the positive characteristic versions of the central reductions of the hypertoric enveloping algebra studied in \cite{PW}.  As in the previous section, an equivariant \'etale splitting is constructed and used to show triviality along the fibers of the symplectic resolution.  Finally, some results about a derived Beilinson-Berstein localization theorem are discussed.  The appendix contains characteristic-free proofs of well-known criteria for the behavior of subtorus actions on semistable points as well as some notes on the conventions for moment maps.

\section{Conventions}

Fix $\boldk$ an algebraically closed field of positive characteristic $p$.\\

Multi-index notation will be used. If $I \in \mathbb{Z}^{\oplus n}$ then $\partial^I = \partial_{x_1}^{i_1} ... \partial_{x_n}^{i_n}$ where $i_j$ is the $j$-th component of $I$.\\

Recall that for any scheme $X$ over $\mathbb{F}_p$, there is a natural map of $\mathbb{Z}$-schemes $F_{X/\mathbb{F}_p}: X \rightarrow X$ given by $\mO_X \rightarrow \mO_X: f \mapsto f^p$.  As $\boldk$ is an algebraically closed field of positive characteristic, $F_{\boldk/\mathbb{F}_p}$ is an invertible map.\\

If $X \rightarrow \boldk$ is a $\boldk$-scheme, then denote $X^{(1)}$ by the $\boldk$-scheme $X \rightarrow \boldk \shortstack[c]{$F_{\boldk/\mathbb{F}_p}^{-1}$\\ $\rightarrow$} \boldk$.  The natural map $F_{X/\mathbb{F}_p}$ is now a map of $\boldk$-schemes $X \rightarrow X^{(1)}$.  As $\boldk$ is fixed throughout this paper, denote it simply by $F_X$ or $F$.\\

For any closed point $x \in X$ denote the residue field at $x$ by $k(x)$.\\

\section{$\mD$-modules on toric varieties}
\subsection{Some linear algebra of Euler operators on $\mathbb{A}^n$}

In \cite{BMR}, is it is shown that on any smooth variety over $\boldk$, the sheaf of crystalline differential operators is an Azumaya algebra over its center.  Equivalently, the fiber of this sheaf at each closed point is a matrix algebra.  In this section, the action of the Euler operators on the point modules will be investigated in the case $X=\mathbb{A}^n$ and yield a slightly different description of fiber of $\mD$ at each point.\\

The main reference for the notation of this section is \cite{BMR}, a condensed review follows.

\begin{itemize}
 \item $\mD=\boldk  \langle \{x_i, \partial_i\} \rangle$ is the Weyl algebra of differential operators on $\mathbb{A}^n$.  Its center $Z(\mD)$ is $\boldk[\{x_i^p, \partial_i^p\}]$.
\item ${T^*\mathbb{A}^n}^{(1)} \cong Spec(Z(\mD))$ (\cite[1.3.2]{BMR}) and $T^{*,1}\mathbb{A}^n \cong Spec(\boldk[\{x_i, \partial_i^p\}])$ (\cite[2.1]{BMR}). $\mD$ is a sheaf of algebras over both of these spaces.
\item If $\zeta = (\vec{b},\omega) \in {T^*\mathbb{A}^n}^{(1)}$ a closed point and $\vec{a}$ the unique $p$-th root of $\vec{b}$ (component by component), then it lies under the closed point $\xi =(\vec{a},\omega) \in T^{*,1}\mathbb{A}^n$
\item For points as above, $\mD_{\zeta} = \mD \otimes_{Z} k(\zeta)$ and $\delta^{\xi} = \mD \otimes_{\boldk[\{x_i, \partial_i^p\}]} k(\xi)$, where in the latter formation of tensor product $\mD$ is given the right module structure over $T^{*,1}\mathbb{A}^n$.\\
\end{itemize}

As all spaces in this section are affine and all sheaves quasi-coherent, to ease notation there will be no distinction between a sheaf and its global sections.\\

\begin{lem}\cite[2.2.1]{BMR}\label{BMRlem} 
The natural left action of $\mD_{\zeta}$ on $\delta^{\xi}$ yields an isomorphism of algebras,
$$\mD_{\zeta} \cong End_k(\delta^{\xi})$$

Moreover, $\delta^{\xi}$ has a $\boldk$-basis $\{\partial^I\}_{I \in \{0,...,p-1\}^n}$ and the selected elements of $\mD_{\zeta}$ act on this basis in the following manner:

\begin{eqnarray*} (x_k -a_k) \cdot \partial^I &=& I_k  \partial^{I-e_k} \\ 
\partial_k \cdot \partial^I &=& 
\begin{cases} \partial^{I+e_k} & \text{if $I_k < p -1$} \\ \omega(\partial_k)^p  \partial^{I-(p-1)e_k} & \text{if $I_k = p-1$}
\end{cases}
\end{eqnarray*}

where $a_k$ is the $k$-th component of $\vec{a}$, $I_k$ the $k$-th component of $I$, and $e_k$ the standard $k$-th basis vector in $\mathbb{F}_p^n=\{0,...,p-1\}^n$.
\end{lem}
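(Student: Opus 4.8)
The plan is to describe $\delta^\xi$ explicitly as a $\boldk$-vector space, compute the action of $x_k$ and $\partial_k$ by bare-hands commutator manipulations, and then promote this to the algebra isomorphism by a surjectivity argument together with a dimension count (or, more cheaply, by invoking the Azumaya property recalled above). First I would set $R=\boldk[\{x_i,\partial_i^p\}]\subset\mD$ and note that this is a polynomial ring in $2n$ variables, since $[\partial_i^p,x_j]=\delta_{ij}\,p\,\partial_i^{p-1}=0$ in characteristic $p$. The key structural input is that $\mD$ is free as a right $R$-module with basis $\{\partial^I:I\in\{0,\dots,p-1\}^n\}$: starting from the PBW monomials $x^J\partial^K$, one repeatedly applies $\partial_k^m x_k=x_k\partial_k^m+m\,\partial_k^{m-1}$ to move each $x_k$ to the right of each $\partial_k$ and writes $\partial_k^{K_k}=\partial_k^{(K_k\bmod p)}(\partial_k^{p})^{\lfloor K_k/p\rfloor}$, exhibiting $x^J\partial^K$ as an $R$-combination of the $\partial^I$ with $I\in\{0,\dots,p-1\}^n$; freeness then follows by comparing ranks over $Z(\mD)=\boldk[\{x_i^p,\partial_i^p\}]$, over which $\mD$ is free of rank $p^{2n}$ and $R$ is free of rank $p^n$. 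Since $k(\xi)=R/\mathfrak m_\xi$ with $\mathfrak m_\xi$ generated by the $x_i-a_i$ and the $\partial_i^p-\omega(\partial_i)^p$, tensoring gives that $\delta^\xi=\mD\otimes_R k(\xi)$ is the $\boldk$-span of the linearly independent vectors $\partial^I:=\partial^I\otimes 1$, so $\dim_{\boldk}\delta^\xi=p^n$.

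Next I would read off the two formulas directly inside $\mD\otimes_R k(\xi)$. Since the $\partial_i$ commute, $\partial_k\cdot\partial^I$ is literally $\partial^{I+e_k}$; when $I_k<p-1$ this is again one of the chosen basis vectors, and when $I_k=p-1$ one factors off $\partial_k^p\in R$, which acts on $k(\xi)$ by the scalar $\omega(\partial_k)^p$, leaving $\omega(\partial_k)^p\,\partial^{I-(p-1)e_k}$. For $x_k$, the relation above rearranges to $x_k\partial^I=\partial^I x_k-I_k\,\partial^{I-e_k}$ in $\mD$ (only the $k$-th slot participates), and passing to $\delta^\xi$, where the right-hand copy of $x_k\in R$ acts by the scalar $a_k$, produces the displayed identity for $(x_k-a_k)\cdot\partial^I$ up to the sign convention chosen for $[\partial_k,x_k]$.

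Finally, for the isomorphism I would prove surjectivity of the tautological map $\mD_\zeta\to\mathrm{End}_{\boldk}(\delta^\xi)$ and compare dimensions. From the formulas, each $(x_k-a_k)\partial_k$ acts diagonally, sending $\partial^I$ to $(I_k+1)\,\partial^I$ (up to overall sign), so the commuting operators $(x_1-a_1)\partial_1,\dots,(x_n-a_n)\partial_n$ are simultaneously diagonalizable with pairwise distinct joint eigenvalues on the $p^n$ basis vectors; Lagrange interpolation then exhibits the rank-one projector $E$ onto $\boldk\cdot\partial^0$ in the image. Using that $\partial^I\cdot\partial^0=\partial^I$, that $\prod_k(x_k-a_k)^{J_k}$ annihilates every basis vector $\partial^{I}$ with $I\not\ge J$ componentwise, and that it carries $\partial^J$ to a nonzero multiple of $\partial^0$ (each factor $J_k!\not\equiv 0$ since $J_k\le p-1$), one checks that $\partial^I\,E\,\prod_k(x_k-a_k)^{J_k}$ is a nonzero scalar multiple of the matrix unit $E_{I,J}$; hence the map is onto. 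Since $\mD$ is locally free of rank $p^{2n}$ over its center with residue field $k(\zeta)=\boldk$, both algebras have $\boldk$-dimension $p^{2n}$ and the surjection is an isomorphism. I expect the only genuinely delicate point to be this production of the matrix units; it can, however, be sidestepped entirely by invoking the Azumaya property recalled in the preamble, which forces $\mD_\zeta$ to be central simple of dimension $p^{2n}$, whence any nonzero $\boldk$-algebra map out of it is injective and, by equality of dimensions, bijective. Everything else — including the reduction from general $n$ to $n=1$, since $\mD$, $R$, $Z(\mD)$, $k(\xi)$ and $\delta^\xi$ all factor as $n$-fold tensor products and $\mathrm{End}$ is compatible with such products, and the routine tracking of scalars and signs — is formal.
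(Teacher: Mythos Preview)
The paper does not give its own proof of this lemma; it is quoted from \cite[2.2.1]{BMR} and used as a black box for the computations that follow. Your argument is a correct, self-contained verification. The right-$R$-module freeness of $\mD$ on $\{\partial^I\}_{I\in\{0,\dots,p-1\}^n}$ and the resulting description of $\delta^\xi$ are set up cleanly, and the commutator manipulations are accurate --- indeed, with the standard convention $[\partial_k,x_k]=1$ one finds $(x_k-a_k)\cdot\partial^I=-I_k\,\partial^{I-e_k}$, so the sign caveat you insert is well placed (the discrepancy is harmless for everything the paper does with the lemma). Your route to the isomorphism, producing matrix units from the simultaneously diagonalized operators $(x_k-a_k)\partial_k$ together with a dimension count, is a pleasant direct argument; the Azumaya shortcut you mention at the end is closer to how \cite{BMR} organize things and is equally valid.
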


This lemma will be used to investigate the left action of the Euler operators $E_k=x_k \partial_k$ on $\delta^{\xi}$.  More concretely, to determine their characteristic and minimal polynomials.\\

\begin{df} The \textit{$(k,\tau)$ principal Euler block} (at $\xi=(\vec{a}, \omega)$), $T_k(\tau)$, is the $p \times p$ matrix defined by:\\
$$T_k(\tau) = \begin{bmatrix}
1-\tau 	& 0 			& 0 			& \cdots & 0 			& a_k \omega(\partial_k)^p 	\\
a_k 			& 2-\tau 	& 0 			& \cdots & 0 			& 0 												\\
0 			& a_k 			& 3-\tau 	&        & 0 			& 0 												\\
\vdots 	&  				& \ddots 	& \ddots & 					 & \vdots 										\\
\vdots  &         &         & \ddots & \ddots   & \vdots                \\
0				& 0				& 0				&  0		 & a_k			& -\tau											\\
\end{bmatrix}$$\\
\end{df}

\begin{pro}\label{matrixpro} Giving the basis $\{\partial^I\}_{I \in \{0,...,p-1\}}$ of $\delta^{\xi}$ the usual lexicographic ordering, left multiplication by $E_k-\tau$ on $\delta^{\xi}$ is represented in this ordered basis by the $p^{n-1} \times p^{n-1}$ block matrix:\\
$$ \widetilde{T_k(\tau)} = A_k \begin{bmatrix}
T_k(\tau) & 0 & \cdots & 0\\
0 &T_k(\tau) &  & 0\\
\vdots & & \ddots & \vdots \\
0 & 0 & \cdots & T_k(\tau) \\
\end{bmatrix} A_k^{-1}
$$\\
where $A_k^{-1}$ is some permutation matrix occurring from a reordering of the basis.\\
\end{pro}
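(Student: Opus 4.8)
The plan is to compute the matrix of left multiplication by $E_k - \tau = x_k\partial_k - \tau$ on the basis $\{\partial^I\}_{I \in \{0,\dots,p-1\}^n}$ directly from Lemma \ref{BMRlem}, and then recognize that it is block-diagonal after a reordering of coordinates that isolates the $k$-th index. First I would record the action of $E_k$ on a single basis vector. Writing $E_k = x_k\partial_k = (x_k - a_k)\partial_k + a_k\partial_k$ and applying the two formulas of Lemma \ref{BMRlem}, one gets, for $I_k < p-1$,
\[
E_k \cdot \partial^I = (x_k - a_k)\cdot \partial^{I+e_k} + a_k \partial^{I+e_k} = (I_k + 1)\,\partial^I + a_k\,\partial^{I+e_k},
\]
while for $I_k = p-1$,
\[
E_k \cdot \partial^I = \omega(\partial_k)^p\,(x_k - a_k)\cdot \partial^{I - (p-1)e_k} + a_k\,\omega(\partial_k)^p\,\partial^{I-(p-1)e_k}
= a_k\,\omega(\partial_k)^p\,\partial^{I-(p-1)e_k},
\]
using that the $(I - (p-1)e_k)$-th coordinate is $0$, so the $(x_k - a_k)$-term vanishes. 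Hence $E_k$ — and therefore $E_k - \tau$ — only couples basis vectors whose multi-indices differ in the $k$-th slot; the remaining $n-1$ coordinates are left untouched.

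Next I would make the block structure precise. Group the basis $\{\partial^I\}$ according to the value of $I' := (I_j)_{j \neq k} \in \{0,\dots,p-1\}^{n-1}$; there are $p^{n-1}$ such groups, each of size $p$, and $E_k - \tau$ preserves each group. Within the group indexed by a fixed $I'$, order the $p$ vectors by $I_k = 0, 1, \dots, p-1$. The formulas above show that on this ordered sub-basis, $E_k - \tau$ acts by: sending the $m$-th vector (with $I_k = m$, $0 \le m \le p-2$) to $(m+1-\tau)$ times itself plus $a_k$ times the $(m+1)$-st vector, and sending the last vector ($I_k = p-1$) to $(-\tau)$ times itself plus $a_k\omega(\partial_k)^p$ times the first vector. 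Reading off the matrix in this basis (columns indexed by the input vector) gives exactly $T_k(\tau)$: the diagonal entries $1-\tau, 2-\tau, \dots, (p-1)-\tau, -\tau$, the subdiagonal entries all equal to $a_k$, and the single top-right corner entry $a_k\omega(\partial_k)^p$. So in the basis ordered first by $I'$ and then by $I_k$, the matrix of $E_k - \tau$ is the block-diagonal matrix with $p^{n-1}$ copies of $T_k(\tau)$ down the diagonal. Passing from this ordering to the original lexicographic ordering of $\{\partial^I\}$ is a permutation of basis vectors, which conjugates the block-diagonal matrix by the corresponding permutation matrix $A_k$; this yields the displayed formula for $\widetilde{T_k(\tau)}$.

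The only genuinely delicate point is the bookkeeping: being careful that $x_k\partial_k = (x_k-a_k)\partial_k + a_k\partial_k$ is the right decomposition to feed into Lemma \ref{BMRlem} (so that both terms land in the given action formulas), and correctly handling the wrap-around case $I_k = p-1$, where one must check that the $(x_k - a_k)$ contribution vanishes because the shifted index has $k$-th component $0$ and $(x_k-a_k)$ lowers that component (multiplying by $I_k = 0$). Everything else is a transcription of the recursion into matrix form and a relabeling of basis vectors, so there is no real obstacle beyond organizing the indices; I would not belabor the explicit form of the permutation $A_k$, since only its existence as a permutation matrix is needed.
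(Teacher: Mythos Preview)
Your proposal is correct and follows essentially the same approach as the paper: decompose $E_k = (x_k-a_k)\partial_k + a_k\partial_k$, apply the action formulas of Lemma~\ref{BMRlem} to obtain the same two-case recursion, and then observe that only the $k$-th index moves so the matrix is block-diagonal in an ordering that isolates that index. The only difference is cosmetic: the paper specifies $A_k$ explicitly as the permutation swapping the $k$-th and $n$-th components of the multi-index (so that the case $k=n$ becomes the base case), whereas you argue directly for general $k$ and invoke only the existence of the permutation.
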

\begin{proof}
\begin{eqnarray*}
(E_k - \tau)\cdot \partial^{I} & = & (x_k\partial_k - \tau)\partial^I\\
															 & = & [(x_k-a_k)\partial_k + a_k \partial_k - \tau]\partial^I\\
															 & = & \begin{cases} (x_k-a_k)\partial^{I+e_k} + a_k \partial^{I+e_k}-\tau \partial^I & \text{ if $I_k < p -1$}\\
															                     (x_k-a_k)\omega(\partial_k)^p \partial^{I-(p-1)e_k} + a_k \omega(\partial_k)^p\partial^{I-(p-1)e_k}-\tau \partial^I & \text{ if $I_k=p-1$}\end{cases}\\
															 & = & \begin{cases} (I_k+e_k)\partial^{I} + a_k \partial^{I+e_k}-\tau \partial^I & \hspace{117pt} \text{ if $I_k < p -1$}\\
															                      a_k \omega(\partial_k)^p\partial^{I-(p-1)e_k}-\tau \partial^I & \hspace{117pt} \text{ if $I_k=p-1$} \end{cases}\\
															 & = & 	\begin{cases} (I_k+e_k-\tau)\partial^{I} + a_k \partial^{I+e_k} & \hspace{128pt} \text{ if $I_k < p -1$}\\
															                      a_k \omega(\partial_k)^p\partial^{I-(p-1)e_k}-\tau \partial^I & \hspace{128pt} \text{ if $I_k=p-1$} \end{cases}\\													                    															                 
\end{eqnarray*}

The result is obvious in the case of $k=n$.  Let $A_k$ be the change of basis matrix described by the following:\\

Let $\sigma_k \hspace{-3pt}:\hspace{-3pt} \{0,...,p-1\}^{n} \rightarrow \{0,...,p-1\}^{n}$ be the permutation which interchanges the $n$-th component and the $j$-th component. Define a linear transformation by the rule $A_k(\partial^I)=\partial^{\sigma_k(I)}$.  $A_k$ takes the lexicographically ordered basis to the lexicographic basis where the $k$-th component has the least weight instead of the $n$-th component.

The result is now clear.\\
\end{proof}
\begin{cor}\label{charpoly}
If $\tau_k$ is a solution to the equation $T^p-T-a_k^p\omega(\partial_k)^p=0$, then the characteristic polynomial of left multiplication by $E_k-\tau_k$ on $\delta^{\xi}$ is

$$-(\lambda^p-\lambda)^{p^{n-1}}$$

and the minimal polynomial is 
$$\lambda^p-\lambda$$
\end{cor}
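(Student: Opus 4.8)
The plan is to reduce the computation to the single $p\times p$ principal Euler block $T_k(\tau_k)$ and then evaluate one determinant. By Proposition \ref{matrixpro}, left multiplication by $E_k-\tau_k$ on $\delta^{\xi}$ is conjugate (via the permutation matrix $A_k$) to the block-diagonal matrix $\widetilde{T_k(\tau_k)}$ with $p^{n-1}$ equal diagonal blocks $T_k(\tau_k)$. Conjugation changes neither polynomial, and for a block-diagonal matrix the characteristic polynomial is the product of the blocks' characteristic polynomials while the minimal polynomial is their least common multiple; since the blocks coincide, it suffices to show $\det(\lambda\,\mathrm{Id}-T_k(\tau_k))=\lambda^p-\lambda$. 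Granting this, the characteristic polynomial of $E_k-\tau_k$ on $\delta^{\xi}$ is $(\lambda^p-\lambda)^{p^{n-1}}$ up to the sign $(-1)^{p^n}=-1$ dictated by the $\det(M-\lambda\,\mathrm{Id})$ convention (the sign being immaterial when $p=2$), and its minimal polynomial equals that of the block $T_k(\tau_k)$.

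For the block I would compute $\chi(\lambda):=\det(\lambda\,\mathrm{Id}-T_k(\tau))$ for an arbitrary scalar $\tau$ by Laplace expansion along the first row. The matrix $\lambda\,\mathrm{Id}-T_k(\tau)$ is lower bidiagonal apart from the single top-right corner entry $-a_k\omega(\partial_k)^p$, so this expansion has exactly two terms. The $(0,0)$-cofactor is triangular and contributes $\prod_{c\in\mathbb{F}_p}(\lambda+\tau-c)$, the product of all $p$ shifted diagonal entries. The corner cofactor is also triangular, with diagonal equal to the subdiagonal string of $-a_k$'s; tracking the position sign $(-1)^{p-1}$ against the $(-a_k)^{p-1}$ from the minor, it contributes $-a_k^p\omega(\partial_k)^p$. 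Using $\prod_{c\in\mathbb{F}_p}(X-c)=X^p-X$ together with $(\lambda+\tau)^p=\lambda^p+\tau^p$ yields the closed form
$$\chi(\lambda)=\lambda^p-\lambda+\tau^p-\tau-a_k^p\omega(\partial_k)^p.$$
Specializing $\tau=\tau_k$ and invoking the hypothesis $\tau_k^p-\tau_k-a_k^p\omega(\partial_k)^p=0$ collapses this to $\chi(\lambda)=\lambda^p-\lambda$, as required.

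Finally, $\lambda^p-\lambda=\prod_{c\in\mathbb{F}_p}(\lambda-c)$ is separable (its derivative is $-1$), so the $p\times p$ matrix $T_k(\tau_k)$ has $p$ distinct eigenvalues, is diagonalizable, and hence has minimal polynomial equal to its characteristic polynomial $\lambda^p-\lambda$; by the first paragraph this is also the minimal polynomial of $E_k-\tau_k$ on $\delta^{\xi}$. I expect the only genuine work here to be the determinant of $T_k(\tau)$: one must arrange the cofactor expansion so that the corner contribution and the sign bookkeeping are transparent, but once the displayed closed form is in hand the corollary is immediate. As a consistency check, one may instead use the identity $E_k^{\,p}-E_k=x_k^{\,p}\partial_k^{\,p}$ in $\mD$, which acts on $\delta^{\xi}$ as the scalar $a_k^p\omega(\partial_k)^p$; thus $E_k$ satisfies $t^p-t-a_k^p\omega(\partial_k)^p$, and Artin--Schreier then re-derives $\prod_{c\in\mathbb{F}_p}\!\big((E_k-\tau_k)-c\big)=0$, so the minimal polynomial of $E_k-\tau_k$ divides $\lambda^p-\lambda$.
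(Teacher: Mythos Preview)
Your proof is correct and follows essentially the same approach as the paper: reduce via Proposition~\ref{matrixpro} to the single block $T_k(\tau_k)$, evaluate its characteristic polynomial by a cofactor expansion (you expand along the first row, the paper along the last column), and invoke the Artin--Schreier relation for $\tau_k$. Your minimal-polynomial argument via separability of $\lambda^p-\lambda$ is a slightly cleaner phrasing of the paper's observation that the characteristic polynomial forces the minimal polynomial to have degree at least $p$, and your closing consistency check via $E_k^p-E_k=x_k^p\partial_k^p$ is a pleasant addition not present in the paper.
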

\begin{proof}
\begin{eqnarray*}
det(E_k-\tau_k - \lambda) &=& det(T_k(\tau)-\lambda)^{p^{n-1}} \hspace{80pt} \text{ (from \ref{matrixpro})}\\
										&=& (a_k\omega(\partial_k)^p a_k^{p-1}+\prod_{i=0}^{p-1}(i-\tau-\lambda))^{p^{n-1}} \hspace{5pt} \text{ (Expansion along last column)}\\
										&=& (a_k^p\omega(\partial_k)^p - (\lambda+\tau)^p + (\lambda+\tau))^{p^{n-1}}\\
										&=& (a_k^p\omega(\partial_k)^p - \tau^p + \tau - \lambda^p + \lambda)^{p^{n-1}} \\
										&=& -(\lambda^p-\lambda)^{p^{n-1}}
\end{eqnarray*}

$E_k-\tau_k$ satisfies the polynomial $\lambda^p-\lambda$.  Given the characteristic polynomial, this is the smallest possible equation that it can satisfy, thus it is the minimal polynomial.
\end{proof}

\begin{dfc}\label{Scon} Let $S = Z(\mD)[\{x_i\partial_i\}] \subset \mD$ (i.e., $S$ is the smallest subalgebra of $\mD$ generated by $Z(\mD)$ and the Euler operators) and set $Y_n =Spec(S)$. As ${T^*\mathbb{A}^n}^{(1)} = Spec(Z(\mD))$, there is a natural map $Y_n \rightarrow {T^*\mathbb{A}^n}^{(1)}$. Write a closed point $\eta \in Y_n$ over the point $\zeta = (\vec{b}, \omega) \in {T^*\mathbb{A}^n}^{(1)}$ as $\eta =(\vec{c}, \omega)$ where $c_k^p - c_k = b_k \omega_k = a_k^p \omega(\partial_k)^p$ (See \ref{netlem}).\\

Define $\mD_{\eta} = \mD \otimes_S k(\eta)$ where in the tensor product construction $\mD$ is again given its right $S$-module structure.\\
Observe that $\mD \otimes_Z S \otimes_S k(\eta) \cong \mD \otimes_Z k(\zeta) = \mD_{\zeta}$.  This implies $\mD_{\eta}$ has a natural left structure as an $\mD_{\zeta}$-module.  The next theorem will determine its module structure using the eigenvalues of the Euler operators.\\
\end{dfc}

\begin{thm}\label{isothm}
As left modules over $\mD_{\zeta}$, $\mD_{\eta} \cong \delta^{\xi}$, and has a unique $1$-dimensional subspace annihilated by the entire collection $\{E_k-c_k\}$.
\end{thm}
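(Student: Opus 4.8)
The plan is to reduce the theorem to the single fact that the Euler operators $\{E_k\}$ have a one-dimensional joint eigenspace on $\delta^{\xi}$ with eigenvalues $c_1,\dots,c_n$, and then to extract both assertions from the matrix-algebra description $\mD_{\zeta}\cong End_{\boldk}(\delta^{\xi})$ of Lemma \ref{BMRlem}. To begin, I would unwind \ref{Scon}: the maximal ideal $\mf{m}_{\eta}\subset S$ corresponding to $\eta$ is generated by $\{x_i^p-b_i,\ \partial_i^p-\omega(\partial_i)^p,\ E_i-c_i\}$, so since $1\in S\subseteq\mD$ the submodule $\mD\mf{m}_{\eta}$ appearing in $\mD_{\eta}=\mD\otimes_S k(\eta)=\mD/\mD\mf{m}_{\eta}$ is exactly the left ideal of $\mD$ generated by those elements. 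The generators $x_i^p-b_i$ and $\partial_i^p-\omega(\partial_i)^p$ already act by zero on $\mD_{\zeta}=\mD\otimes_Z k(\zeta)$, so $\mD_{\eta}\cong\mD_{\zeta}/\mD_{\zeta}\langle \bar E_i-c_i\rangle$ as left $\mD_{\zeta}$-modules, where $\bar E_i$ is the image of $E_i$ in $\mD_{\zeta}$. Thus everything is governed by the left ideal generated by the pairwise-commuting operators $\bar E_i-c_i$ acting on $\delta^{\xi}$.

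Next, I would analyze that action. By Lemma \ref{BMRlem} the basis $\{\partial^I\}$ presents $\delta^{\xi}$ as a tensor product $\bigotimes_{i=1}^{n}V_i$ with $V_i\cong\boldk^p$ indexed by $\{0,\dots,p-1\}$, in such a way that $x_i-a_i$ and $\partial_i$ (hence also $E_i=x_i\partial_i$) act only on the $i$-th tensor factor, and there via the one-variable formulas; concretely $E_i|_{V_i}$ is the principal Euler block $T_i(0)$ of Proposition \ref{matrixpro}. The determinant computation in the proof of Corollary \ref{charpoly}, taken at $\tau=0$, gives the characteristic polynomial of $E_i|_{V_i}$ as $\pm(\lambda^p-\lambda-b_i\omega(\partial_i)^p)$; this is separable, and $c_i$ is one of its roots by the defining relation of \ref{Scon}, so $L_i:=\ker\bigl(E_i|_{V_i}-c_i\bigr)$ is a line in $V_i$. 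Because each $E_i$ is supported on the $i$-th factor, the joint eigenspace is $W:=\bigcap_k\ker(E_k-c_k)=\bigotimes_i L_i\subset\delta^{\xi}$, so $\dim_{\boldk}W=1$. I expect this to be the main obstacle: it is precisely the tensor factorization with each $E_i$ confined to one factor that forces $\dim_{\boldk}W$ to be exactly $1$, whereas the per-operator multiplicity $p^{n-1}$ of Corollary \ref{charpoly} on its own does not determine the joint eigenspace dimensions.

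Finally, I would translate $\dim_{\boldk}W=1$ into the two conclusions via $\mD_{\zeta}\cong End_{\boldk}(\delta^{\xi})\cong\delta^{\xi}\otimes_{\boldk}(\delta^{\xi})^{*}$, with $\mD_{\zeta}$ acting on the left-hand factor. For any $\phi\in End_{\boldk}(\delta^{\xi})$, the left ideal $End_{\boldk}(\delta^{\xi})\,\phi$ equals $\{B\in End_{\boldk}(\delta^{\xi}):B|_{\ker\phi}=0\}=\delta^{\xi}\otimes(\ker\phi)^{\perp}$; summing over $\phi=\bar E_i-c_i$ and using $\sum_i(\ker\phi_i)^{\perp}=\bigl(\bigcap_i\ker\phi_i\bigr)^{\perp}$ identifies the defining left ideal of $\mD_{\eta}$ with $\delta^{\xi}\otimes W^{\perp}$. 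Hence $\mD_{\eta}\cong\delta^{\xi}\otimes\bigl((\delta^{\xi})^{*}/W^{\perp}\bigr)\cong\delta^{\xi}\otimes W^{*}\cong\delta^{\xi}$ as left $\mD_{\zeta}$-modules, since $\dim_{\boldk}W^{*}=1$. Under this identification $E_k-c_k$ acts as $(\bar E_k-c_k)\otimes\mathrm{id}_{W^{*}}$, so the subspace of $\mD_{\eta}$ annihilated by the entire collection $\{E_k-c_k\}$ is $W\otimes W^{*}$, which is one-dimensional; as any annihilated line is contained in it, that one-dimensional subspace is unique, completing the argument.
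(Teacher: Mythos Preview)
Your argument is correct and is organized differently from the paper's. The paper also reduces to computing $\mD_{\eta}\cong\mD_{\zeta}/\sum_k\mD_{\zeta}(E_k-c_k)$, but then proceeds by induction on $n$: the base case is an explicit rank computation with the $p\times p$ block $T_1(c_1)$, and the inductive step peels off the last coordinate via the projection $Y_n\to Y_{n-1}$, analyzing $\ker(E_n-c_n)\subset\delta^{\xi}$ as a $\mD_{\zeta'}$-module; the final dimension count is obtained by decomposing $\mD_{\zeta}\cong\bigoplus(\delta^{\xi})^{\checkmark}$ as a right module and counting the joint $(0,\dots,0)$-eigenspace. You instead exploit the tensor factorization $\delta^{\xi}\cong\bigotimes_i V_i$ with each $E_i$ confined to its own factor, read off $\dim W=1$ directly from the separability of the one-variable Artin--Schreier polynomial, and then use the clean left-ideal identity $End(V)\cdot\phi=V\otimes(\ker\phi)^{\perp}$ to compute the quotient in one stroke. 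Your route is shorter and more structural, bypassing both the induction and the somewhat indirect eigenspace count via $(\delta^{\xi})^{\checkmark}$; the paper's approach, on the other hand, stays closer to the explicit matrix descriptions of \ref{matrixpro} and \ref{charpoly} and makes the role of the projection $Y_n\to Y_{n-1}$ visible, which is thematically consistent with the later moment-map arguments.
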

\begin{proof}
$\mD_{\eta} = \mD \otimes_{S} Z(\mD)[\{x_i\partial_i\}]/(\{x_i\partial_i - c_k\}) = \mD/\sum_{k=0}^n\mD(E_k-c_k)$.\\

Inducing the following exact sequence of left $\mD$-modules:

$$\mD^{\oplus n} \shortstack[c]{$\tilde{\phi}$ \\ $\rightarrow$} \mD \twoheadrightarrow \mD_{\eta} \rightarrow 0$$

where $\tilde{\phi}$ is the map $\vec{v}=(v_k) \mapsto \sum_k v_k(E_k-c_k)$.\\

Applying $- \otimes_{Z(\mD)} k(\zeta)$ to the above exact sequence, results in the following exact sequence of left $\mD_{\zeta}$-modules:

$$ \mD_{\zeta}^{\oplus n} \shortstack[c]{$\phi$ \\ $\rightarrow$} \mD_{\zeta} \twoheadrightarrow \mD_{\eta} \rightarrow 0$$

where the first arrow, $\phi$, is given by $\vec{v} \mapsto \sum_k v_k (E_k-c_k)$.\\

It is already known that $\mD_{\zeta}$ is a matrix algebra from \ref{BMRlem}, so $\mD_{\eta} \cong \delta^{\xi}$ if and only if $dim_{k(\zeta)}(\mD_{\eta})=p^n$. (Recall over any field, modules over a matrix ring are a direct sum of simple objects and up to isomorphism there is only one irreducible module (\cite[Ch. IX]{Hu}).)\\

Proceed by induction on $n$.

Base case:
If $n=1$, the map $\phi$ is given (in the standard ordered basis $\mmB$ for $End(\delta^{\xi}))$ by the $p \times p$ matrix:\\

$$ [ \phi ]_{\mmB} = \begin{bmatrix}T_1(c_1)^{T} & 0 & \cdots & 0 \\
													0								& T_1(c_1)^{T} & & 0\\
													\vdots & & \ddots & \vdots\\
													0 & 0 & 0 & T_1(c_1)^{T} 
					\end{bmatrix}$$

The rank of $\phi$ is $p^2-dim(Nul(\phi))=p^2-p$ as $T_1(c_1)$ has minimal and characteristic polynomial $X^p-X$ by \ref{charpoly}.\\

From this, it follows that $dim_{k(\zeta)}(\mD_{\eta}) = p^2-(p^2-p)=p$.

The statement about the dimension of the space annihilated by $E_1-c_1$ is clear from \ref{charpoly}.\\

Inductive step:  Suppose it is true for $\mathbb{A}^{n-1}$.
Let $\pi: T^*\mathbb{A}^n \rightarrow T^*\mathbb{A}^{n-1}$ be the natural projection forgetting the last coordinate.  There is a similar natural map $\pi_Y: Y_n \rightarrow Y_{n-1}$ by forgetting the last coordinate of $Z(\mD)$ and the last Euler operator.  Let $\eta' = \pi_Y(\eta)$ and $\zeta'=\pi(\zeta)$.\\

From \ref{matrixpro}, $E_n-c_n$ acting on $\delta^{\xi}$ has kernel of dimension $p^{n-1}$.  There is a natural inclusion $\mD_{\zeta'} \subset \mD_{\zeta}$ and $\mD_{\zeta'}$ acts naturally on $Ker(E_n-c_n)$.  By dimension and inductive hypothesis, this representation is isomorphic to $\mD_{\eta'}$. Also by inductive hypothesis, this space has only a one-dimensional subset annihilated by the entire collection $\{E_k-c_k\}_{k=1}^{n-1}$.  Therefore, this space is the unique $1$-dimensional subspace of $\delta^{\xi}$ annihilated by the entire collection $\{E_k-c_k\}_{k=1}^n$.\\

The only conclusion left is to show that $dim_{k(\zeta)}(\mD_{\eta}) = p^n$.\\

The operators $E_k-c_k$ acting on $\mD_{\zeta}$ on the right are commuting and semi-simple with all eigenvalues in $\mathbb{F}_p$.  Thus, it is easy to see that the image of $\phi$ consists of the space spanned by the multi-eigenvectors not of multi-eigenvalue $(0,0,...,0)$ in $\mD_{\zeta}$.  It follows that the dimension of $\mD_{\eta}$ is the dimension of the multi-eigenspace of multi-eigenvalue $(0,0,...,0)$ inside of $\mD_{\zeta}$.\\

Decompose $\mD_{\zeta} \cong \oplus_{i=1}^{p^n} (\delta^{\xi})^{\checkmark}$ as an isomorphism of right $\mD_{\zeta}$-modules.  It has already been shown that $\partial^{\xi}$ has a unique $1$-dimensional subspace of multi-eigenvalue $(0,0,...,0)$. The right action on $(\partial^{\xi})^{\checkmark}$ is given by transposing the left action, whence $(\partial^{\xi})^{\checkmark}$ has a unique $1$-dimensional subspace of multi-eigenvalue $(0,...,0)$.  Thus, the dimension of the multi-eigenspace of multi-eigenvalue $(0,...,0)$ of $\mD_{\zeta}$ is $p^n$, which implies that $dim_{k(\zeta)}(\mD_{\eta})$ is $p^n$.\\
\end{proof}

\begin{cor}\label{loccor}
The natural action map induces an isomorphism,

$$\mD_{\zeta} \cong  End_{k(\eta)}(\mD_{\eta})$$
\end{cor}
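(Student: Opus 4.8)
The plan is to write down the natural map $\mD_\zeta \to \operatorname{End}_{k(\eta)}(\mD_\eta)$ explicitly and then finish by a dimension count combined with the simplicity of $\mD_\zeta$. First I would record the two commuting module structures on $\mD_\eta = \mD\otimes_S k(\eta)$. The left $\mD$-module structure is the one inherited from $\mD$ (as in \ref{Scon}), while the right $k(\eta)$-module structure is inherited from $k(\eta)$; here one uses that $S=Z(\mD)[\{x_i\partial_i\}]$ is commutative (the Euler operators commute with one another and, in characteristic $p$, with $Z(\mD)$), so that $k(\eta)$ is genuinely a commutative ring acting on the right. Since the right $S$-action and the left $\mD$-action on $\mD$ commute by associativity, the left $\mD$-action on $\mD_\eta$ is $k(\eta)$-linear, giving a $\boldk$-algebra homomorphism $\mD \to \operatorname{End}_{k(\eta)}(\mD_\eta)$. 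Moreover $Z(\mD)$ is central and its image in $k(\eta)$ is the composite $Z(\mD)\to k(\zeta)\hookrightarrow k(\eta)$, so each $z\in Z(\mD)$ acts on $\mD_\eta$ by the scalar $z(\zeta)$; hence the homomorphism kills the ideal generated by $\{z-z(\zeta)\}$ and factors through $\mD_\zeta = \mD\otimes_{Z(\mD)}k(\zeta)$. This is the action map in the statement.

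Next I would pin down the relevant dimensions. Since $S$ is a finitely generated $\boldk$-algebra and $\eta$ is a closed point, the Nullstellensatz gives $k(\eta)=\boldk$, so $\operatorname{End}_{k(\eta)}(\mD_\eta)=\operatorname{End}_{\boldk}(\mD_\eta)$. By Theorem \ref{isothm}, $\mD_\eta\cong\delta^{\xi}$ as $\boldk$-vector spaces, so $\dim_{\boldk}\mD_\eta=p^n$ and $\dim_{\boldk}\operatorname{End}_{\boldk}(\mD_\eta)=p^{2n}$. On the other hand, Lemma \ref{BMRlem} gives $\mD_\zeta\cong\operatorname{End}_{\boldk}(\delta^{\xi})$, so $\dim_{\boldk}\mD_\zeta=p^{2n}$ as well, and in particular $\mD_\zeta$ is a simple $\boldk$-algebra.

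Finally, the action map is a $\boldk$-algebra homomorphism out of the simple ring $\mD_\zeta$; its kernel is a two-sided ideal, and it is proper because $1\mapsto\operatorname{id}_{\mD_\eta}\neq 0$ (as $\mD_\eta\neq 0$ by \ref{isothm}). Hence the kernel is zero, the map is injective, and an injective $\boldk$-linear map between spaces of equal finite dimension $p^{2n}$ is an isomorphism. I expect no serious obstacle here: the only point requiring care is the bookkeeping in the first step — verifying that the left $\mD_\zeta$-action is $k(\eta)$-linear and descends to $\mD_\zeta$ — together with confirming $k(\eta)=\boldk$ so that the two dimension counts genuinely match.
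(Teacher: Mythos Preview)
Your argument is correct and uses the same two inputs as the paper (\ref{isothm} and \ref{BMRlem}); the paper's proof is essentially identical but shorter, since it invokes the full module isomorphism $\mD_\eta\cong\delta^{\xi}$ from \ref{isothm} (not just the dimension) and transports the isomorphism $\mD_\zeta\cong End_{k(\zeta)}(\delta^{\xi})$ of \ref{BMRlem} directly, rather than passing through the simplicity-plus-dimension argument. Your extra care in writing down the action map and checking $k(\eta)=\boldk$ is fine but not strictly needed once you use the module isomorphism.
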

\begin{proof}
$\mD_{\eta} \cong \delta^{\xi}$ as left $\mD_{\zeta}$-modules by \ref{isothm} and $\mD_{\zeta} \cong End_{k(\zeta)}(\delta^{\xi})$ by \ref{BMRlem}.  Observing that the natural map $k(\zeta) \rightarrow  k(\eta)$ is an isomorphism, one arrives at the conclusion.\\
\end{proof}

\subsection{An equivariant \'{e}tale splitting of $\mD_{\mathbb{A}^n}$}

Notice that $\mD$ has no non-zero zero divisors and is of rank $p^{2n}$ over its center.  Therefore it is not a trivial Azumaya algebra for $n>0$; as any matrix ring of rank greater than $1$ must have nilpotent elements.  The notation will remain the same as found in the previous section.  A brief review of the theory of Azumaya algebras follows, the standard references for this section are \cite{Milne} (for schemes) and \cite{DI} (for rings).\\

Let $X$ be a Noetherian scheme and $\mB$ a sheaf of central $\mO_X$-algebras.  $\mB$ is an \textit{Azumaya algebra} if it is locally free of finite rank over $\mO_X$ such that the natural map given by the left and right actions of $\mB$ on itself, $\mB \otimes_{\mO_X} \mB^{opp} \rightarrow \mEnd_{\mO_X}(\mB)$, is an isomorphism. In \cite{Milne} it is shown that this condition is equivalent to $\mB \otimes_{\mO_X} k(x)$ being a central simple algebra over $k(x)$ for all $x \in X$. In particular, if $X$ is finite type over an algebraically closed field, then $\mB$ is an Azumaya algebra if and only if $\mB \otimes_{\mO_X} k(x)$ is (isomorphic to) a matrix algebra for all closed points $x$.\\

Let $\mF$ be a locally free sheaf of finite rank on $X$, then $\mEnd_{\mO_X}(\mF)$ is an Azumaya algebra, since it is clearly a central simple algebra when specialized to each point of $X$. These are the simplest types of Azumaya algebras.  An Azumaya algebra $\mB$ is called \textit{trivial} if there is some locally free sheaf $\mF$ such that $\mB \cong \mEnd_{\mO_X}(\mF)$ as $\mO_X$-algebras.  The locally free sheaf $\mF$ is called a \textit{splitting bundle} for $\mB$.  A map $\pi: X' \rightarrow X$ is said to \textit{trivialize} $\mB$ if $\pi^*\mB$ is a trivial Azumaya algebra over $X'$.  It is a theorem that every Azumaya algebra is locally trivial in the \'etale topology \cite[Ch IV]{Milne}.\\

The following proposition will be used to simplify arguments.\\
\begin{pro}\label{nakpro}
Let $(R,\mathfrak{m})$ be a Noetherian local ring and $\phi$ a surjective map of $R$-modules $\phi: F_1 \twoheadrightarrow F_2$. If $F_1$ and $F_2$ are free of rank $r$, then $\phi$ is an isomorphism.  In particular, a map of finite rank free $R$-modules is an isomorphism if and only if it is an isomorphism mod $\mathfrak{m}$.
\end{pro}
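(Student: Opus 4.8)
The plan is to deduce everything from Nakayama's lemma applied to $\ker\phi$. First I would observe that, since $F_2$ is free it is projective, so the short exact sequence $0 \to \ker\phi \to F_1 \xrightarrow{\phi} F_2 \to 0$ splits, giving an isomorphism $F_1 \cong F_2 \oplus \ker\phi$. Because $R$ is Noetherian and $F_1$ is finitely generated, the submodule $\ker\phi$ is finitely generated as well; this is the only place Noetherianity enters.

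Next I would tensor the splitting $F_1 \cong F_2 \oplus \ker\phi$ with the residue field $\boldk(\mathfrak{m}) := R/\mathfrak{m}$. This yields an isomorphism $\boldk(\mathfrak{m})^r \cong \boldk(\mathfrak{m})^r \oplus (\ker\phi \otimes_R \boldk(\mathfrak{m}))$ of $\boldk(\mathfrak{m})$-vector spaces, and comparing dimensions forces $\ker\phi \otimes_R \boldk(\mathfrak{m}) = 0$, i.e. $\ker\phi = \mathfrak{m}\,\ker\phi$. Since $\ker\phi$ is finitely generated, Nakayama's lemma gives $\ker\phi = 0$, so $\phi$ is injective, hence an isomorphism.

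For the final assertion one direction is trivial. Conversely, suppose $\bar\phi := \phi \otimes_R \boldk(\mathfrak{m})$ is an isomorphism. Comparing dimensions of source and target shows the two free modules have the same rank $r$. Moreover $\operatorname{coker}(\phi) \otimes_R \boldk(\mathfrak{m}) \cong \operatorname{coker}(\bar\phi) = 0$, and $\operatorname{coker}(\phi)$ is finitely generated, being a quotient of $F_2$, so Nakayama gives $\operatorname{coker}(\phi) = 0$; that is, $\phi$ is surjective. Now the first part applies and $\phi$ is an isomorphism.

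There is essentially no obstacle here; the one point to keep straight is that it is the freeness (projectivity) of $F_2$ that makes the sequence split, so that $\ker\phi$ is realized as a direct summand of a finitely generated module and is therefore itself finitely generated — without that observation Nakayama's lemma could not be invoked.
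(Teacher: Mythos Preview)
Your proof is correct and follows essentially the same strategy as the paper: reduce to showing $\ker\phi \otimes_R k(\mathfrak{m}) = 0$ and then apply Nakayama. The only cosmetic difference is that the paper obtains the needed exactness after tensoring via $\operatorname{Tor}^1_R(F_2,k(\mathfrak{m}))=0$, whereas you use the equivalent fact that projectivity of $F_2$ splits the sequence; either way the argument for the second assertion is identical.
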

\begin{proof}
By freeness, $Tor^1_R(F_2,k(\mathfrak{m})) = 0$, so the following sequence is exact.

$$0 \rightarrow Kern(\phi) \otimes_R k(\mathfrak{m}) \rightarrow F_1 \otimes_R k(\mathfrak{m}) \rightarrow F_2 \otimes_R k(\mathfrak{m}) \rightarrow 0$$

However, $\phi$ is surjective.  Any surjective map between vector spaces of the same dimension is always injective.  Thus, $Kern(\phi) \otimes_R k(\mathfrak{m}) = 0$ and by Nakayama's lemma $Kern(\phi)=0$.\\

The second statement follows from Nakayama's lemma also.  Suppose $\psi: G_1 \rightarrow G_2$ is a map surjective mod $\mathfrak{m}$. As $\psi$ is a surjection mod $\mathfrak{m}$, it must be a surjection.  $\psi$ an isomorphism mod $\mathfrak{m}$ implies $G_1$ and $G_2$ have the same rank.  By the previous result, $\psi$ is an isomorphism.\\
\end{proof}

\begin{lem}\label{netlem} The map $Y_n \rightarrow {T^*\mathbb{A}^n}^{(1)}$ (see \ref{Scon}) is an \'etale cover.  It isomorphic over ${T^*\mathbb{A}^n}^{(1)}$ to the Artin-Schreier cover $Spec(Z(\mD) \rightarrow Z(\mD)[T_1,...,T_n]/(\{T_i^p-T_i - x_i^p \partial_i^p\}))$.
\end{lem}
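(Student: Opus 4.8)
The plan is to identify $S = Z(\mD)[\{x_i\partial_i\}]$ explicitly as a quotient of a polynomial ring over $Z(\mD)$ and then recognize that quotient as the Artin-Schreier algebra. The key computational input is the relation $(x_i\partial_i)^p - x_i\partial_i = x_i^p\partial_i^p = (x_i^p)(\partial_i^p)$ in the Weyl algebra $\mD$; this is the classical Jacobson formula for $p$-th powers of derivations applied to the Euler operator $E_i = x_i\partial_i$, noting that $E_i^{[p]} = E_i$ as a derivation and that $E_i^p$ differs from $E_i^{[p]}$ precisely by $x_i^p\partial_i^p$ (which is central). Since the $E_i$ commute with one another and each $E_i$ commutes with all $x_j^p, \partial_j^p$, the subalgebra $S$ is commutative, and there is a surjective $Z(\mD)$-algebra map $\psi: Z(\mD)[T_1,\dots,T_n] \to S$ sending $T_i \mapsto E_i = x_i\partial_i$. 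The relations $T_i^p - T_i - x_i^p\partial_i^p$ lie in $\ker\psi$, so $\psi$ factors through $R := Z(\mD)[T_1,\dots,T_n]/(\{T_i^p - T_i - x_i^p\partial_i^p\})$, giving a surjection $\bar\psi: R \twoheadrightarrow S$.

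Next I would show $\bar\psi$ is injective, equivalently that $S$ is free of rank $p^n$ over $Z(\mD)$ with basis $\{E^I\}_{I \in \{0,\dots,p-1\}^n}$. One clean way: $R$ is visibly free over $Z(\mD)$ of rank $p^n$ (monomials $T^I$, $I \in \{0,\dots,p-1\}^n$, form a basis, since each relation is monic of degree $p$ in $T_i$), so it suffices to prove $\bar\psi$ is injective. For this I would pass to the generic point or, better, check injectivity after tensoring with $k(\zeta)$ at each closed point $\zeta \in {T^*\mathbb{A}^n}^{(1)}$ — since $R$ is $Z(\mD)$-free, $\bar\psi$ is an isomorphism iff it is an isomorphism mod every maximal ideal, by Proposition \ref{nakpro}. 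Mod $\zeta = (\vec b,\omega)$, the target $S \otimes_{Z} k(\zeta)$ is the commutative $k(\zeta)$-subalgebra of $\mD_\zeta$ generated by the images of the $E_i$; Corollary \ref{charpoly} shows each $E_i$ (after the harmless shift by a root $\tau_i$) acts on $\delta^{\xi}$ with minimal polynomial of degree $p$, and since the $E_i$ act as commuting semisimple operators with joint eigenvalues indexed by $\mathbb{F}_p^n$ (visible from the block structure of Proposition \ref{matrixpro}), the algebra they generate has dimension exactly $p^n$ over $k(\zeta)$. Hence $\bar\psi \otimes k(\zeta)$ is a surjection between $k(\zeta)$-vector spaces of equal dimension $p^n$, so an isomorphism; by Proposition \ref{nakpro}, $\bar\psi$ is an isomorphism. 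This simultaneously proves $S \cong R$ over $Z(\mD)$, which is exactly the claimed description of $Y_n$ as the Artin-Schreier cover.

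Finally, étaleness is immediate once the Artin-Schreier description is in hand: $Y_n \to {T^*\mathbb{A}^n}^{(1)} = \mathrm{Spec}\,Z(\mD)$ is presented as $\mathrm{Spec}$ of $Z(\mD)[T_1,\dots,T_n]/(\{f_i\})$ with $f_i = T_i^p - T_i - x_i^p\partial_i^p$, and the Jacobian matrix $(\partial f_i/\partial T_j)$ is the diagonal matrix with entries $pT_i^{p-1} - 1 = -1$ (since $p = 0$), which is a unit; equivalently, a single Artin-Schreier extension $t^p - t - a$ is étale because its derivative is $-1$, and a tensor product of étale algebras is étale. Since the map is also finite and surjective (it is finite free of rank $p^n$), it is a finite étale cover. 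The only place demanding genuine care is the rank/injectivity step for $\bar\psi$: one must be sure the commuting operators $E_i$ generate a full rank-$p^n$ algebra and not something smaller, which is where the explicit eigenvalue analysis of Section 3.1 (Proposition \ref{matrixpro} and Corollary \ref{charpoly}) does the real work; everything else is formal.
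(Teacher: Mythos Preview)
Your argument is correct and reaches the same conclusion, but it is considerably more roundabout than the paper's. The paper simply asserts that $S$ is free of rank $p^n$ over $Z(\mD)$ and then applies Proposition~\ref{nakpro} once to the surjection $R\twoheadrightarrow S$ of free rank-$p^n$ modules. The implicit justification for freeness is a direct PBW observation: the monomials $E^I=\prod_i(x_i\partial_i)^{I_i}$ for $I\in\{0,\dots,p-1\}^n$ span $S$ over $Z(\mD)$ (using the Artin--Schreier relation to reduce higher powers), and they are $Z(\mD)$-linearly independent because $E^I=x^I\partial^I+(\text{lower order in }\partial)$ in the basis $\{x^A\partial^B\}_{A,B\in\{0,\dots,p-1\}^n}$ of $\mD$. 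Your route instead establishes the fiber dimension at every closed point via the spectral analysis of Section~3.1. This works, but two steps deserve more care than you give them. First, you cannot identify $S\otimes_Z k(\zeta)$ with a subalgebra of $\mD_\zeta$ before you know $S\hookrightarrow\mD$ stays injective after tensoring; what the eigenvalue argument actually gives is that the \emph{image} in $\mD_\zeta$ has dimension $p^n$, so $\dim S\otimes k(\zeta)\geq p^n$, and combined with the obvious upper bound $\leq p^n$ you get equality. Second, knowing each $E_i$ individually has minimal polynomial $\lambda^p-\lambda$ (Corollary~\ref{charpoly}) does not by itself force the commutative algebra they generate to have dimension $p^n$; you need the joint eigenspaces on $\delta^\xi$ to be one-dimensional, which is really the content of Theorem~\ref{isothm} rather than Proposition~\ref{matrixpro} alone. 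The paper's one-line PBW triangularity bypasses all of this machinery.
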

\begin{proof} Recall that the Euler operators satisfy the equation $(x_k \partial_k)^p - x_k \partial_k = x_k^p \partial_k^p$ in $\mD$.  Let $R = Z(\mD)[T_1,..,T_n]$, then there is a natural map $R \rightarrow S$ given by $T_i \mapsto x_i\partial_i$.\\

This gives a surjection of $Z(\mD)$-algebras, $Z(\mD)[T_1,...,T_n]/(\{T_i^p-T_i - x_i^p \partial_i^p\}) \twoheadrightarrow S$.\\

As both sides are free $Z(\mD)$-modules of the same rank, this is an injection by \ref{nakpro}.

The map $Z(\mD) \rightarrow Z(\mD)[T_1,...,T_n]/(\{T_i^p-T_i - x_i^p \partial_i^p\})$ is clearly \'etale.\\
\end{proof}

\begin{pro} Let $S$ be as in \ref{Scon} then $\mD$, considered as a right $S$-module, is locally free.
\end{pro}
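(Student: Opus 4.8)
The plan is to obtain local freeness from flatness together with finiteness, and to prove flatness by a fibre-by-fibre argument over $Z(\mD)$. The finiteness half is immediate: $\mD$ is free of rank $p^{2n}$ over $Z(\mD)$ (the monomials $x^I\partial^J$ with $0\le I,J\le p-1$ form a $Z(\mD)$-basis), and $Z(\mD)\subseteq S$, so $\mD$ is a finitely generated right $S$-module over the Noetherian ring $S$, hence finitely presented. It therefore suffices to prove that $\mD$ is flat as a right $S$-module, since a finitely presented flat module over a commutative ring is projective and a finitely generated projective module over a Noetherian ring is locally free of finite rank.

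For the flatness I would invoke the fibre-by-fibre criterion for flatness (\emph{crit\`ere de platitude par fibres}, EGA IV, 11.3.10) for the module $\mD$ relative to the ring maps $Z(\mD)\hookrightarrow S$; this is a statement purely about $\mD$ as a module over the commutative rings $Z(\mD)$ and $S$, so the noncommutativity of $\mD$ is irrelevant. The hypotheses to check are (i) $\mD$ is flat over $Z(\mD)$, which holds because it is even free, and (ii) for every prime $\mathfrak p\subset Z(\mD)$ the fibre $\mD\otimes_{Z(\mD)}k(\mathfrak p)$ is flat over $S\otimes_{Z(\mD)}k(\mathfrak p)$. By Lemma~\ref{netlem} the map $Z(\mD)\to S$ is finite \'etale, so $S\otimes_{Z(\mD)}k(\mathfrak p)$ is a finite \'etale $k(\mathfrak p)$-algebra, hence a finite product of separable field extensions of $k(\mathfrak p)$; every module over such a ring is a product of vector spaces and so is free, and in particular condition (ii) holds automatically. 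The criterion then yields that $\mD$ is flat over $S$, which completes the argument.

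The main thing to get right is citing the fibre-by-fibre criterion in the correct form (the \'etale hypothesis on $Z(\mD)\to S$ is exactly what is needed: without reduced fibres flatness over $Z(\mD)$ plus flatness of $S$ over $Z(\mD)$ would not suffice). Everything else is formal, and as a consistency check the resulting rank of $\mD$ over $S$ must be $p^{2n}/p^{n}=p^{n}$, matching $\dim_{k(\eta)}\mD_\eta=p^{n}$ from Theorem~\ref{isothm}. If a more self-contained route is preferred, one can instead observe that $S$ is a regular ring (being \'etale over the polynomial ring $Z(\mD)=\boldk[\{x_i^p,\partial_i^p\}]$) and that $\mD$ is a maximal Cohen--Macaulay $S$-module: localizing the free $Z(\mD)$-module $\mD$ along the finite flat map $Z(\mD)\to S$ exhibits each $\mD_{\mathfrak q}$ ($\mathfrak q$ maximal in $S$) as a direct summand of a free $Z(\mD)_{\mathfrak p}$-module, hence free over $Z(\mD)_{\mathfrak p}$ and thus of full depth over $S_{\mathfrak q}$, so $\mD_{\mathfrak q}$ is free by the Auslander--Buchsbaum formula, and local freeness over all of $S$ follows since the non-free locus is closed and contains no closed point.
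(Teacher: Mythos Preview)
Your main argument via the crit\`ere de platitude par fibres is correct and takes a genuinely different route from the paper. The paper instead invokes the constant-rank criterion for a coherent sheaf on an irreducible variety: it computes the generic fibre of $\mD$ over $S$ to have dimension $p^{2n}/p^{n}=p^{n}$ (from the degrees of $\mD$ and $S$ over $Z(\mD)$) and then appeals to Theorem~\ref{isothm} to see that every closed fibre also has dimension $p^{n}$. Thus the paper's proof rests on the explicit linear-algebra computation in Theorem~\ref{isothm}, whereas your argument shows that local freeness is a \emph{formal} consequence of Lemma~\ref{netlem} (\'etaleness of $Z(\mD)\to S$) together with freeness of $\mD$ over $Z(\mD)$, without ever touching the Euler-operator calculations. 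This is a cleaner logical dependency and also does not require knowing that $Y_n$ is irreducible; on the other hand, the paper's route is more elementary in that it avoids the EGA machinery.

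One small caveat on your alternative Auslander--Buchsbaum route: the assertion that $\mD_{\mathfrak q}$ is a direct summand of a free $Z(\mD)_{\mathfrak p}$-module is not literally correct before completion, since $S\otimes_{Z(\mD)}Z(\mD)_{\mathfrak p}$ need not split as a product of local rings over a non-henselian base. Passing to completions (where Hensel's lemma applies and each factor becomes isomorphic to $\widehat{Z(\mD)_{\mathfrak p}}$) repairs this, or one can argue depth directly from the unramifiedness relation $\mathfrak p S_{\mathfrak q}=\mathfrak q S_{\mathfrak q}$. Either way this is only a side remark; your primary proof stands as written.
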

\begin{proof} By \cite[II,8.9]{Har}, a module is locally free over an irreducible variety if and only if its fiber over the generic point and the closed points have the same vector space dimension.  Over the generic point, $\mD$ has dimension $p^{2n}$ over $Quot(Z(\mD))$.  $Quot(S)$ is $p^n$-dimensional over $Quot(Z(\mD))$, it follows that $\mD$ has dimension $p^n$ over $Quot(S)$.  Theorem \ref{isothm} confirms that the fiber over the closed point has dimension $p^n$.
\end{proof}

\begin{lem}\label{amainlem}
The action of $\mD \otimes_{Z(\mD)} S$ on $\mD$, considered as a left $\mD$-module and a right $S$-module, induces an isomorphism:\\
$$\mD \otimes_{Z(\mD)} S \cong End_{S}(\mD)$$
\end{lem}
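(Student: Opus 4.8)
The plan is to verify that the natural action map $\Phi: \mD \otimes_{Z(\mD)} S \to \operatorname{End}_S(\mD)$ is an isomorphism by checking it fiberwise over closed points of $Y_n = \operatorname{Spec}(S)$ and invoking Proposition \ref{nakpro}. First I would observe that both sides are locally free $S$-modules of the same finite rank: the right-hand side because $\mD$ is locally free of rank $p^n$ over $S$ (the preceding proposition), so $\operatorname{End}_S(\mD)$ has rank $p^{2n}$; the left-hand side because $\mD$ has rank $p^{2n}$ over $Z(\mD)$ and we are base-changing along $Z(\mD) \to S$, which keeps rank $p^{2n}$ over $S$. Since \ref{nakpro} reduces an equality-of-ranks surjection (equivalently, an isomorphism mod every maximal ideal) to a pointwise check, it suffices to show that for each closed point $\eta \in Y_n$ lying over $\zeta \in {T^*\mathbb{A}^n}^{(1)}$, the induced map
$$
(\mD \otimes_{Z(\mD)} S) \otimes_S k(\eta) \;\longrightarrow\; \operatorname{End}_S(\mD) \otimes_S k(\eta)
$$
is an isomorphism of $k(\eta)$-vector spaces (indeed of algebras).

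Next I would identify both fibers. On the left, $(\mD \otimes_{Z(\mD)} S)\otimes_S k(\eta) \cong \mD \otimes_{Z(\mD)} k(\zeta) = \mD_\zeta$, using that $k(\zeta) \to k(\eta)$ is an isomorphism (both equal $\boldk$, as noted in \ref{loccor}). On the right, since $\mD$ is locally free over $S$, one has $\operatorname{End}_S(\mD) \otimes_S k(\eta) \cong \operatorname{End}_{k(\eta)}(\mD \otimes_S k(\eta)) = \operatorname{End}_{k(\eta)}(\mD_\eta)$. Under these identifications the fiber of $\Phi$ becomes exactly the natural action map $\mD_\zeta \to \operatorname{End}_{k(\eta)}(\mD_\eta)$, which is precisely the isomorphism established in Corollary \ref{loccor}. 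Therefore $\Phi$ is an isomorphism modulo every maximal ideal of $S$, hence an isomorphism by \ref{nakpro}.

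The one genuinely delicate point — the main obstacle — is justifying the base-change identity $\operatorname{End}_S(\mD)\otimes_S k(\eta) \cong \operatorname{End}_{k(\eta)}(\mD_\eta)$, i.e., that forming $S$-linear endomorphisms commutes with the fiber at $\eta$. This holds because $\mD$ is a \emph{finitely generated} module over the Noetherian ring $S$ and is \emph{locally free} (flat) over $S$; under these hypotheses $\mathcal{H}om_S(\mD, \mD)$ commutes with arbitrary base change, in particular with $-\otimes_S k(\eta)$. (Concretely, locally on $\operatorname{Spec} S$ one may write $\mD$ as a direct summand of a finite free module, reducing the claim to the evident isomorphism $\operatorname{Hom}_S(S^r, S^r)\otimes_S k(\eta) \cong \operatorname{Hom}_{k(\eta)}(k(\eta)^r, k(\eta)^r)$.) Once this is in hand, one should also note that $\Phi$ is automatically a surjection since it is surjective mod every maximal ideal, so the hypothesis of \ref{nakpro} is met and the conclusion follows; and since each fiber map is an algebra isomorphism, $\Phi$ itself is an isomorphism of $S$-algebras, not merely of $S$-modules.
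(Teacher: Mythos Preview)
Your proof is correct and follows essentially the same route as the paper: both argue that the two sides are locally free over $S$ of the same rank, invoke Proposition~\ref{nakpro} to reduce to a fiberwise check, and then identify the fiber map with the isomorphism of Corollary~\ref{loccor}. You supply more detail than the paper (explicit rank computations and a justification of the base-change identity $\mEnd_S(\mD)\otimes_S k(\eta)\cong End_{k(\eta)}(\mD_\eta)$, which the paper simply calls ``the change of rings formula''), but the argument is the same.
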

\begin{proof}
Both sides of the equation are locally free sheaves of the same rank over $S$, using \ref{nakpro} it is only necessary to show it is an isomorphism after reduction to each point $\eta$ of $Y_n$.\\

By the change of rings formula, this is the same as showing $\mD_{\zeta} \cong End_{k(\eta)}(\mD_{\eta})$, which is the content of \ref{loccor}.\\ 
\end{proof}

\begin{thm} The map $Y_n \rightarrow {T^*\mathbb{A}^n}^{(1)}$ is a $T$-equivariant \'{e}tale cover $T$-equivariantly trivializing the Azumaya algebra $\mD$.
\end{thm}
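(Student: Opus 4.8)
The plan is to combine the two lemmas already in hand with the elementary observation that the Euler operators are $T$-invariant. Non-equivariantly there is essentially nothing left to do: by \ref{netlem} the map $\pi\colon Y_n \to {T^*\mathbb{A}^n}^{(1)}$ is a finite \'etale cover, and $\pi^*\mD$ is precisely $\mD \otimes_{Z(\mD)} S$ regarded as a sheaf of algebras on $Y_n = Spec(S)$. By \ref{amainlem} the action map identifies this with $End_S(\mD) = \mEnd_{\mO_{Y_n}}(\mD)$, so $\pi$ trivializes $\mD$ with splitting bundle $\mD$ itself, viewed as a right $S$-module. The content of the theorem beyond this is the $T$-equivariance of the whole picture.

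To set up the action, let $T = \mathbb{G}_m^n$ act on $\mathbb{A}^n$ by scaling the coordinates, so that $t = (t_i)$ sends $x_i \mapsto t_i x_i$ and $\partial_i \mapsto t_i^{-1}\partial_i$; this makes $\mD$ a $T$-equivariant $\boldk$-algebra, preserves the center $Z(\mD)$ (with $x_i^p \mapsto t_i^p x_i^p$ and $\partial_i^p \mapsto t_i^{-p}\partial_i^p$), and, because $T$ is split over $\mathbb{F}_p$, descends to the same torus acting on ${T^*\mathbb{A}^n}^{(1)} = Spec(Z(\mD))$. The key point is that each Euler operator $E_i = x_i\partial_i$ is $T$-invariant. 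Hence $S = Z(\mD)[\{x_i\partial_i\}]$ is a $T$-stable subalgebra of $\mD$, so $Y_n = Spec(S)$ carries an induced $T$-action for which $\pi$ is $T$-equivariant. Equivalently, under the Artin--Schreier presentation of \ref{netlem} one lets $T$ fix the variables $T_i$, which is consistent with the defining relations $T_i^p - T_i = x_i^p\partial_i^p$ exactly because their right-hand sides are $T$-invariant; in the notation of \ref{Scon} this amounts to letting $t$ act on $\eta = (\vec c, \omega)$ by fixing the root coordinates $\vec c$, which is compatible with $c_i^p - c_i = b_i\omega_i$ since $b_i \mapsto t_i^p b_i$ and $\omega_i \mapsto t_i^{-p}\omega_i$.

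It remains to promote the isomorphism of \ref{amainlem} to a $T$-equivariant one. Since $S \subset \mD$ is a $T$-stable (central over $Z(\mD)$) subalgebra and $\mD$ is a $T$-equivariant algebra, the bimodule $\mD$ given by left $\mD$-multiplication and right $S$-multiplication is $T$-equivariant; in particular $\mD$, viewed as a right $S$-module, is a $T$-equivariant quasi-coherent sheaf on $Y_n$, and this is the equivariant splitting bundle. The action map $\mD \otimes_{Z(\mD)} S \to End_S(\mD)$ of \ref{amainlem} is then a morphism of $T$-equivariant $S$-algebras, and it is an isomorphism by that lemma; pulled back along $\pi$ this is exactly a $T$-equivariant isomorphism $\pi^*\mD \cong \mEnd_{\mO_{Y_n}}(\mD)$, which is the assertion of the theorem.

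The only delicate point, and it is bookkeeping rather than mathematics, is to make sure one is using the \emph{same} torus on both sides of $\pi$ (legitimate because $T$ is defined over $\mathbb{F}_p$, so its Frobenius twist is canonically itself) and that the isomorphism in \ref{amainlem} is genuinely $T$-equivariant and not merely a map between two objects that each happen to carry a $T$-action; the latter is automatic here because the map is induced by the $T$-equivariant action of $\mD$ on itself. All the substantive work has already been absorbed into the linear-algebra computations and into \ref{amainlem}, so I expect no serious obstacle.
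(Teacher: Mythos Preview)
Your proposal is correct and follows exactly the approach the paper intends: the paper actually states this theorem with no proof at all, treating it as an immediate consequence of \ref{netlem} and \ref{amainlem} (and the general version, \ref{emainthm}, is proved with the single sentence ``This statement is local, and thus, follows directly from \ref{amainlem}''). Your write-up simply makes explicit the $T$-equivariance bookkeeping that the paper leaves tacit, namely that $S$ is $T$-stable because the Euler operators are $T$-invariant and that the action map of \ref{amainlem} is therefore $T$-equivariant.
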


\subsection{The case of an arbitrary toric variety}
Let $T$ be a connected torus over $\boldk$.  A toric variety is an irreducible separated $T$-variety, $X$, such that for some closed point $x \in X$, the restricted action map $T \times \{x\} \rightarrow X$ is an open immersion with (Zariski) dense image.\\

A basic result is that every normal toric variety can be covered by $T$-invariant affine open subsets.  Moreover, it can be shown these affine patches are isomorphic as $T$-varieties to $\mathbb{A}^{m} \times \mathbb{G}_m^{n-m}$ where $n$ is the rank of $T$.  This statement reduces all of our computations for general toric varieties to the case of $T$-invariant open subsets of $\mathbb{A}^n$.\\

$X$ will now always denote a smooth toric variety over $\boldk$ of dimension $n$.\\

Let $\mD =\mD_X$ be the ring of crystalline differential operators on $X$.  For every affine open $T$-stable subset $U \subset X$, $\mD(U)$ is a rational $T$-module.  This follows from the observation that $\mT_{X}(U)$ and $\mO_X(U)$ are rational $T$-modules, and these generate $\mD(U)$ as an algebra.  This $T$-module structure is compatible with sheaf restriction maps because the $T$-actions on $\mT_X(U)$ and $\mO_X(U)$ are also.\\

For each $T$-stable open affine set $U$, $T$ being linearly reductive implies there is a decomposition $\mD(U) = \oplus_{\chi \in X^*(T)} \mD(U)^{\chi}$, where $\mD_X(U)^{\chi}$ is the space on which $T$ acts through $\chi$.  Since the action of $T$ is compatible with restriction maps, each of the modules $\mD(U)^{\chi}$ glue together to make a sheaf $\mD^{\chi}$ on $X$.\\

The space of invariants $\mT_X(U)^T$ is $n$-dimensional over $\boldk$, which follows directly from the description of $U$ as being $T$-isomorphic to an open subset of $\mathbb{A}^n$.\\

The following sheaf of algebras over $T^*X^{(1)}$ will be the main focus of interest.
\begin{df}
$\mS = \oplus_{\chi \in pX^*(T)} \mD^{\chi} = \oplus_{\chi \in X^*(T)} \mD^{\chi^{p}}$.\\
\end{df}

\begin{pro}\label{torpro} Let $U$ be a $T$-stable affine open subset, then $\mS(U) \cong \mO_{T^*X^{(1)}}(U)[\{E_i\}]$ where $\{E_i\}$ is any basis for the space of $T$-invariant elements in $\mT_X(U)$.
\end{pro}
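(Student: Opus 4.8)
The plan is to reduce immediately to the case where $U$ is a $T$-stable affine open subset of $\mathbb{A}^n$, using the structural fact (cited just above) that every $T$-stable affine patch of a smooth toric variety is $T$-isomorphic to $\mathbb{A}^m \times \mathbb{G}_m^{n-m}$, hence to a $T$-stable open subset of $\mathbb{A}^n$. On such a $U$, the ring $\mO_X(U)$ is a localization of $\boldk[x_1,\dots,x_n]$ at $T$-eigenfunctions (monomials), and $\mT_X(U)$ is freely generated over $\mO_X(U)$ by $\partial_1,\dots,\partial_n$. First I would identify the $T$-invariant vector fields: since $x_i$ has weight $\chi_i$ and $\partial_i$ has weight $-\chi_i$, the Euler operators $E_i = x_i\partial_i$ are $T$-invariant, and on a $T$-stable open subset of $\mathbb{A}^n$ any invariant vector field is an $\mO_X(U)^T = \boldk$-linear combination of the $E_i$ that lie in $\mT_X(U)$ (on the torus factors $x_i$ is invertible so $E_i = x_i\partial_i$ is still a regular vector field). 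So $\{E_i\}$ — appropriately interpreted coordinate by coordinate — is indeed a $\boldk$-basis of $\mT_X(U)^T$, matching the stated $n$-dimensionality.

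Next I would compute $\mS(U) = \bigoplus_{\chi \in pX^*(T)} \mD(U)^\chi$ directly. A PBW-type basis for $\mD(U)$ over $\mO_X(U)$ is $\{\partial^I\}$, and since $\mO_X(U)$ is spanned by monomials $x^J$ (with $J$ ranging over the appropriate sub-lattice of $\mathbb{Z}^n$ determined by which coordinates are inverted), a $\boldk$-basis of $\mD(U)$ is $\{x^J\partial^I\}$. The $T$-weight of $x^J\partial^I$ is $\sum_k (J_k - I_k)\chi_k$. Collecting the summands whose weight lies in $pX^*(T)$: because the $\chi_k$ generate (a finite-index subgroup of) $X^*(T)$ and we are on a smooth — hence "standard" — patch, the weight condition forces $J_k \equiv I_k \pmod p$ for each $k$. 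Thus $\mS(U)$ is spanned by monomials $x^J\partial^I$ with $J \equiv I \pmod p$. Rewriting $x^J\partial^I = (x^{pM}\partial^{pM'})\cdot(\text{monomial in the } E_k)$ via the identities $x_k^a\partial_k^a = E_k(E_k-1)\cdots(E_k-a+1)$ and absorbing the $p$-th powers $x_k^p, \partial_k^p$ into $\mO_{T^*X^{(1)}}(U) = Z(\mD)(U)$, one sees exactly that $\mS(U) = Z(\mD)(U)[\{E_i\}] = \mO_{T^*X^{(1)}}(U)[\{E_i\}]$. The ring on the right is a polynomial ring over $\mO_{T^*X^{(1)}}(U)$ in the $E_i$ modulo only the relations $E_i^p - E_i = x_i^p\partial_i^p$ already living in the coefficient ring, i.e. it is free of rank $p^n$ over $\mO_{T^*X^{(1)}}(U)$, and the displayed map is visibly surjective; to get injectivity I would invoke Proposition \ref{nakpro} (equality of ranks) exactly as in Lemma \ref{netlem}, or simply match the spanning set above against the monomials $x^{pM}\partial^{pM'}E^L$ with $L \in \{0,\dots,p-1\}^n$.

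Finally, I would check compatibility with restriction so that this local description is canonical: the $T$-eigenspace decomposition is compatible with the restriction maps (as already noted in the text), and the $E_i$ are global invariant vector fields on each patch, so the local isomorphisms glue, giving the sheaf statement $\mS \cong \mO_{T^*X^{(1)}}[\{E_i\}]$ over $T^*X^{(1)}$ locally on $X$. The main obstacle I anticipate is the bookkeeping in the weight computation on the mixed patches $\mathbb{A}^m \times \mathbb{G}_m^{n-m}$: one must be careful that inverting the torus coordinates $x_{m+1},\dots,x_n$ does not introduce extra invariants beyond the $E_i$ and does not spoil the "$J_k \equiv I_k \pmod p$" characterization of the $pX^*(T)$-isotypic part — this is where smoothness (each cone is generated by part of a basis of the lattice) is essential, since it guarantees the characters $\chi_k$ behave like an honest coordinate system and the congruence condition is clean rather than being entangled across coordinates.
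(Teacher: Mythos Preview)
Your proposal is correct and follows essentially the same route as the paper: reduce to a $T$-stable open $U \subset \mathbb{A}^n$, observe $Z(\mD)(U)=\mO_{T^*X^{(1)}}(U)\subset \mS(U)$, and then identify the $pX^*(T)$-isotypic part as generated over $Z(\mD)(U)$ by the $T$-invariant Euler operators. The paper's proof is extremely terse (it simply asserts that ``$\mS(U)$ is generated by $Z(\mD)(U)$ and differential expressions invariant under $T$, this is precisely the ring generated by the Euler operators''), whereas you actually carry out the supporting weight computation $J\equiv I\pmod p$ and the rewriting $x_k^a\partial_k^a=E_k(E_k-1)\cdots(E_k-a+1)$ that makes that assertion true; this is exactly the content behind the paper's one-line claim, and your appeal to Proposition~\ref{nakpro} for injectivity mirrors Lemma~\ref{netlem}.
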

\begin{proof}
$U$ is $T$-isomorphic to an open subset of $\mathbb{A}^n$ so it is enough to prove this result for $X = \mathbb{A}^n$ with the usual $T$-action.\\

By \cite[1.3.2]{BMR}, $Z(\mD)(U)= \mO_{T^*X^{(1)}}(U) \subset \mS(U)$.  Therefore $\mS(U)$ is generated by $Z(\mD)(U)$ and differential expressions invariant under $T$, this is precisely the ring generated the Euler operators $\{E_i\}$ and $Z(\mD)(U)$.\\
\end{proof}

\begin{cor} The sheaf of algebras $\mS$ is commutative, contains $Z(\mD)$ as a subalgebra, and is locally free of rank $p^n$ over $Z(\mD)$.\\
\end{cor}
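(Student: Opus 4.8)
The plan is to reduce all three assertions to the affine case by means of the structure theory recalled just above: cover $X$ by $T$-stable affine open subsets $U$, each $T$-isomorphic to an open subset of $\mathbb{A}^n$, and note that commutativity, being a subalgebra containing $Z(\mD)$, and being locally free of rank $p^n$ over $Z(\mD)$ are all local conditions on $X$ (equivalently on $T^*X^{(1)}$, since the projection $T^*X^{(1)} \to X^{(1)}$ is affine). On each such $U$, Proposition~\ref{torpro} identifies $\mS(U)$ with $\mO_{T^*X^{(1)}}(U)[\{E_i\}_{i=1}^n]$, where the $E_i$ form a basis of the $n$-dimensional space $\mT_X(U)^T$ and, after transporting along the $T$-isomorphism $U \hookrightarrow \mathbb{A}^n$, may be taken to be the Euler operators among the coordinate vector fields.

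Commutativity and the containment of $Z(\mD)$ then fall out of this description. For commutativity it suffices to observe, on each $U$, that the Euler operators pairwise commute (they involve pairwise disjoint variables) and that each commutes with $Z(\mD)(U) = \mO_{T^*X^{(1)}}(U)$, which is central in $\mD(U) \supseteq \mS(U)$; hence $\mO_{T^*X^{(1)}}(U)[\{E_i\}]$ is commutative, and commutativity of $\mS$ follows by gluing. For the second assertion, Proposition~\ref{torpro} together with \cite[1.3.2]{BMR} already gives $Z(\mD)(U) = \mO_{T^*X^{(1)}}(U) \subseteq \mS(U)$ as a subalgebra, compatibly with restriction; since $\mS$ is by construction the subsheaf $\oplus_{\chi \in pX^*(T)}\mD^{\chi}$ of $\mD$ and $Z(\mD)\subseteq\mD$, these local inclusions glue to $Z(\mD) \hookrightarrow \mS$. (Equivalently: $Z(\mD)$ is $T$-stable and, computed on $\mathbb{A}^n$, all of its weights lie in $pX^*(T)$, so $Z(\mD) \subseteq \oplus_{\chi \in pX^*(T)}\mD^{\chi} = \mS$.)

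For local freeness of rank $p^n$ I would work on a $T$-stable affine $U$ with $U \subseteq \mathbb{A}^n$ open after the $T$-isomorphism. There $\mD_X|_U = \mD_{\mathbb{A}^n}|_U$, the invariant vector fields are the restrictions of the Euler operators $x_i\partial_i$, and $Z(\mD_X)(U) = Z(\mD_{\mathbb{A}^n})(U)$, so $\mS_X(U)$ is the sections over $\pi^{-1}(U)$ of the quasi-coherent sheaf on ${T^*\mathbb{A}^n}^{(1)}$ attached to $S = Z(\mD)[\{x_i\partial_i\}]$ of Definition-Construction~\ref{Scon}; and $\pi^{-1}(U)$ is a principal open of ${T^*\mathbb{A}^n}^{(1)}$ cut out by an element of $Z(\mD)$, so $\mS_X(U)$ is simply a localization of $S$ over $Z(\mD)$. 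By Lemma~\ref{netlem}, $Y_n = Spec(S) \to {T^*\mathbb{A}^n}^{(1)}$ is isomorphic to the Artin-Schreier cover $Spec(Z(\mD)[T_1,\dots,T_n]/(\{T_i^p - T_i - x_i^p\partial_i^p\}))$, so $S$ is a free $Z(\mD)$-module of rank $p^n$ with basis the monomials $\{E^I : I \in \{0,\dots,p-1\}^n\}$; localization preserves freeness and rank, so $\mS_X(U)$ is free of rank $p^n$ over $Z(\mD)(U)$. Since the opens $\pi^{-1}(U)$ cover $T^*X^{(1)}$, the sheaf $\mS$ is locally free of rank $p^n$ over $Z(\mD)$.

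Granting Propositions~\ref{torpro}, \ref{nakpro} and Lemma~\ref{netlem}, the corollary is essentially formal, so there is no serious obstacle; the one point needing a moment's care is the last reduction, namely that restricting $\mS$ to the preimage of a $T$-stable affine patch of a general toric $X$ really does amount to a localization over $Z(\mD)$ of the algebra $S$ attached to $\mathbb{A}^n$. This uses that a $T$-stable affine open of $\mathbb{A}^n$ has the form $D(g)$ for a $T$-semiinvariant $g$ whose radical agrees with that of a $p$-th power lying in $Z(\mD)$ — equivalently, that forming the $pX^*(T)$-isotypic sum and the subalgebra $Z(\mD)[\{E_i\}]$ commute with $T$-equivariant Zariski localization. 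Once this is in place, all three statements are immediate consequences of Proposition~\ref{torpro} and Lemma~\ref{netlem}.
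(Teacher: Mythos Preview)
Your proof is correct and follows exactly the approach the paper intends: the corollary is stated in the paper without proof, as an immediate consequence of Proposition~\ref{torpro} (which gives the local description $\mS(U)\cong \mO_{T^*X^{(1)}}(U)[\{E_i\}]$) together with Lemma~\ref{netlem} (which identifies this with an Artin--Schreier extension, hence free of rank $p^n$). Your write-up simply fills in the details the paper leaves implicit, including the care about localization to $T$-stable opens of $\mathbb{A}^n$, and is more thorough than strictly necessary but entirely in line with the paper's argument.
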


\begin{cor} $Y=\underline{Spec}(\mS) \rightarrow Spec(Z(\mD))=T^*X^{(1)}$ is an \'{e}tale cover.\\
\end{cor}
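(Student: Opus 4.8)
The plan is to check \'etaleness on an open cover of $T^*X^{(1)}$ and reduce to the case $X=\mathbb{A}^n$, which is already settled in Lemma \ref{netlem}. Being \'etale is local on the target, and by the structure theory quoted above $X$ is covered by $T$-stable affine open subsets $U$, each $T$-isomorphic to $\mathbb{A}^m\times\mathbb{G}_m^{n-m}$; the corresponding open subsets $T^*U^{(1)}\subseteq T^*X^{(1)}$ lying over the $U$'s form an open cover. So it suffices to show that $\underline{Spec}(\mS)\to T^*X^{(1)}$ is \'etale after restriction to each $T^*U^{(1)}$. By the preceding corollary $\mS$ is finite locally free of rank $p^n$ over $\mO_{T^*X^{(1)}}$, so the map is automatically finite and flat, and surjective since the rank is positive and $T^*X^{(1)}$ is irreducible; thus only \'etaleness itself remains, which I will get by base change.

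First I would identify the restricted morphism with a base change of $Y_n\to {T^*\mathbb{A}^n}^{(1)}$. Fix such a $U$ together with a $T$-equivariant isomorphism of $U$ with a $T$-stable open subset $V\subseteq\mathbb{A}^n$ equipped with the standard torus action (possible because $\mathbb{A}^m\times\mathbb{G}_m^{n-m}$ is itself a $T$-stable open subset of $\mathbb{A}^n$). Forming $\mD$, forming its center, and forming the $T$-weight decomposition $\mS=\oplus_{\chi}\mD^{\chi^p}$ all commute with restriction to $T$-stable opens, so $\mD_U\cong\mD_{\mathbb{A}^n}|_V$, $Z(\mD_U)\cong Z(\mD_{\mathbb{A}^n})|_V$ and $\mS_U\cong\mS_{\mathbb{A}^n}|_V$; hence the restriction of $\underline{Spec}(\mS)\to T^*X^{(1)}$ over $T^*U^{(1)}$ is isomorphic to $\underline{Spec}(\mS_{\mathbb{A}^n})\times_{{T^*\mathbb{A}^n}^{(1)}}T^*V^{(1)}\to T^*V^{(1)}$, that is, the base change of $Y_n=\underline{Spec}(\mS_{\mathbb{A}^n})\to{T^*\mathbb{A}^n}^{(1)}$ along the open immersion $T^*V^{(1)}\hookrightarrow{T^*\mathbb{A}^n}^{(1)}$.

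Finally, Lemma \ref{netlem} says $Y_n\to{T^*\mathbb{A}^n}^{(1)}$ is \'etale: it is the Artin-Schreier cover cut out by $\{T_i^p-T_i-x_i^p\partial_i^p\}$, whose \'etaleness one also reads off from the Jacobian criterion, since the $T_i$-derivative of $T_i^p-T_i$ is $pT_i^{p-1}-1=-1$, a unit. \'Etaleness is stable under base change, so each restriction is \'etale, and since \'etaleness is local on the target, $\underline{Spec}(\mS)\to T^*X^{(1)}$ is \'etale; together with the finiteness and surjectivity from the first step, it is an \'etale cover. The only point that needs care is the compatibility of $\mD$, its center, and the weight decomposition with restriction to $T$-stable opens and with the $T$-equivariant embedding $U\hookrightarrow\mathbb{A}^n$ --- precisely the reduction already exploited in Proposition \ref{torpro}; alternatively one can avoid geometry entirely and argue on rings, using Proposition \ref{torpro} to present $\mS(U)$ as $\mO_{T^*X^{(1)}}(U)[E_1,\dots,E_n]$, using Proposition \ref{nakpro} and the rank count exactly as in Lemma \ref{netlem} to identify this with the Artin-Schreier algebra over $\mO_{T^*X^{(1)}}(U)$, and reading off \'etaleness from that presentation. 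I do not anticipate a genuine obstacle.
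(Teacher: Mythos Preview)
Your proposal is correct and follows the same approach as the paper: reduce to the local model via Proposition~\ref{torpro} and then invoke the \'etaleness of $Y_n\to{T^*\mathbb{A}^n}^{(1)}$ established in Lemma~\ref{netlem}. The paper's proof is a single sentence to this effect, and you have simply spelled out the base-change argument more carefully; your alternative ring-theoretic route at the end is also just an unwinding of the same reduction.
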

\begin{proof}
Since this is a local property, by \ref{torpro}, it is enough to investigate the extension $Y_n \rightarrow {T^*\mathbb{A}^n}^{(1)}$, which is an \'{e}tale cover (\ref{Scon}).\\
\end{proof}

\begin{thm}\label{emainthm}(Main Theorem 1) $Y$ is an $T$-equivariant \'{e}tale cover of $T^*X^{(1)}$ which $T$-equivariantly trivializes $\mD$ with splitting bundle $\mD$.
\end{thm}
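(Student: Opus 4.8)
The plan is to reduce to the affine case already treated and then glue. Finiteness and étaleness of $\pi\colon Y \to T^*X^{(1)}$ were established in the preceding corollary (and $\mS$ is locally free of rank $p^n$ over $Z(\mD)$), so $\pi$ is already a finite étale cover; what remains is to produce the $T$-action on $Y$, check that $\pi$ is $T$-equivariant, and globalize Lemma \ref{amainlem}.

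Since $\mD$ is a $T$-equivariant sheaf of algebras on $X$, each weight-graded piece $\mD^{\chi}$, and in particular $Z(\mD)$ and $\mS = \bigoplus_{\chi \in pX^*(T)}\mD^{\chi}$, is a $T$-equivariant subsheaf of $\mO_X$-algebras with $T$-action compatible with restriction maps. Hence $Y = \underline{Spec}(\mS)$ is a $T$-variety, and the $T$-equivariant inclusion $Z(\mD) \hookrightarrow \mS$ makes $\pi$ a $T$-morphism. One checks this agrees with the usual (Frobenius-twisted cotangent) $T$-action on $T^*X^{(1)} = \underline{Spec}(Z(\mD))$ directly from the identification $Z(\mD) = \mO_{T^*X^{(1)}}$ as $T$-sheaves, so $\pi$ is a $T$-equivariant étale cover of $T^*X^{(1)}$ in the literal sense.

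For the trivialization, cover $X$ by $T$-stable affine opens $U$, each $T$-isomorphic to an open subvariety of $\mathbb{A}^n$ (structure theory of smooth toric varieties). The proof of Lemma \ref{amainlem} is pointwise: it uses only Proposition \ref{nakpro} together with Corollary \ref{loccor} on the fibers at closed points, so it applies verbatim over each such $U$, giving that the natural action map
\[
\mD(U)\otimes_{Z(\mD)(U)}\mS(U)\ \xrightarrow{\ \sim\ }\ End_{\mS(U)}\bigl(\mD(U)\bigr)
\]
is an isomorphism, $\mD(U)$ being a right $\mS(U)$-module which remains locally free of rank $p^n$ after localization to $U$ by the proposition preceding \ref{amainlem}. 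As $\mS$ is a quasi-coherent $\mO_X$-algebra the morphism $Y \to X$ is affine, so the right $\mS$-module $\mD$ corresponds to a locally free rank-$p^n$ sheaf on $Y$, still denoted $\mD$ — the prospective splitting bundle — and $\pi^*\mD \cong \mD\otimes_{Z(\mD)}\mS$. The displayed local isomorphisms, being instances of one and the same restriction-compatible action map, glue to a global isomorphism of $\mO_Y$-algebras
\[
\pi^*\mD\ \cong\ \mEnd_{\mO_Y}(\mD),
\]
exhibiting $\pi^*\mD$ as a trivial Azumaya algebra with splitting bundle $\mD$. Finally, every object above is built functorially from the $T$-equivariant sheaves $\mD$ and $\mS$, so the action map, and hence this trivialization, is $T$-equivariant.

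\emph{Main obstacle.} The only genuine work is verifying that Lemma \ref{amainlem} transfers from $\mathbb{A}^n$ to an arbitrary $T$-stable affine open $U$: one must confirm that $\mD(U)$ is still locally free of rank $p^n$ over $\mS(U)$ (a localization statement) and that $Y|_U \cong \underline{Spec}(\mS(U))$, so that the pointwise criterion behind \ref{loccor} still detects isomorphy there. A secondary, purely bookkeeping point is matching the $T$-action on $Y$ induced from the grading on $\mS$ with the geometric $T$-action on $T^*X^{(1)}$.
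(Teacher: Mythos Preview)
Your proposal is correct and follows the same approach as the paper: the paper's proof is the single sentence ``This statement is local, and thus, follows directly from \ref{amainlem},'' and you have spelled out exactly what that reduction means, including the $T$-equivariance bookkeeping that the paper leaves implicit.
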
 
\begin{proof}
This statement is local, and thus, follows directly from \ref{amainlem}.\\
\end{proof}

\begin{rem}\label{OVex} Independently and earlier, Ogus and Vologodsky (\cite{OV}) had constructed this \'etale cover.  Their technique is rather general and does not use the assumption of a group action.  The presentation above emphasizes group actions, and this emphasis will be essential later.  The relationship between the two approaches is given by the canonical lift of Frobenius.  Every toric variety has a canonical lift of Frobenius to the length two Witt vectors given by taking the coordinate functions to the $p$-th powers.  In \cite{OV}, it is shown how to associate to a lift of Frobenius a splitting of the Cartier operator.  This splitting is the data the authors used to construct an \'etale cover which, in the case of a toric variety, is isomorphic over $T^*X^{(1)}$ to the cover constructed in this section.
\end{rem}
\subsection{Another description of $Y$ and triviality along fibers of the moment map}

\begin{pro} Let $T[p]$ be the kernel of the group map $T \rightarrow T$: $t \mapsto t^p$.  For each $T$-stable open subset $U$ of $X$, $\mD(U)$ is a naturally a $T[p]$-module.  The proof of \ref{torpro} shows that $\mS(U) = \mD(U)^{T[p]}$, the $T[p]$ invariants of $\mD(U)$.\\
\end{pro}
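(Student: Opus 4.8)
The plan is to identify $T[p]$ as a diagonalizable (and infinitesimal) group scheme and then to read off its invariants directly from the $T$-weight decomposition of $\mD(U)$ already in force. Since $T$ is a torus, a choice of isomorphism $T \cong \mathbb{G}_m^n$ identifies $T[p]$ with $\mu_p^n$; intrinsically, $T[p]$ is the diagonalizable group scheme $D(X^*(T)/pX^*(T))$, so its character group is $X^*(T[p]) = X^*(T)/pX^*(T)$, and the closed immersion $T[p] \hookrightarrow T$ corresponds, under the anti-equivalence $D(-)$ between diagonalizable group schemes and finitely generated abelian groups, to the quotient map $q \colon X^*(T) \twoheadrightarrow X^*(T)/pX^*(T)$. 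For the first assertion: the text already shows $\mD(U)$ is a rational $T$-module when $U$ is $T$-stable affine, and this globalizes to a $T$-equivariant structure on the sheaf $\mD$; restricting the $T$-action along $T[p] \hookrightarrow T$ then makes each $\mD(U)$ (for $U$ any $T$-stable open) a $T[p]$-module, compatibly with the restriction maps, which are $T$- and hence $T[p]$-equivariant. No rationality subtlety arises here, since $\boldk[T[p]]$ is a finite-dimensional Hopf algebra and so every comodule over it is automatically locally finite.

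For the second assertion, recall that a representation of a diagonalizable group scheme is exactly a grading of the underlying vector space by its character group, and that restriction along $T[p] \hookrightarrow T$ amounts to pushing an $X^*(T)$-grading forward along $q$. Thus the decomposition $\mD(U) = \bigoplus_{\chi \in X^*(T)} \mD(U)^\chi$, viewed as a $T[p]$-module, is the $X^*(T)/pX^*(T)$-graded space whose $\bar\chi$-piece is $\bigoplus_{q(\chi) = \bar\chi} \mD(U)^\chi$; its $T[p]$-invariants are the $\bar\chi = 0$ piece, so
$$\mD(U)^{T[p]} \;=\; \bigoplus_{\chi \in \ker q} \mD(U)^\chi \;=\; \bigoplus_{\chi \in pX^*(T)} \mD(U)^\chi \;=\; \mS(U),$$
the final equality being the definition of $\mS$. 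This is precisely the content that the proof of \ref{torpro} already exploits: after the $T$-isomorphism of a $T$-stable affine $U$ with an open subset of $\mathbb{A}^n$ used there, a monomial $x^I\partial^J$ is a $T$-weight vector whose weight lies in $pX^*(T)$ exactly when $I \equiv J \pmod p$, and the span of such monomials is exactly the subalgebra generated by $Z(\mD)(U)$ and the Euler operators $\{E_i\}$ --- which by \ref{torpro} is $\mS(U)$.

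I do not anticipate a genuine obstacle. The one point deserving care is that $T[p]$ is non-reduced in characteristic $p$, so ``$T[p]$-invariants'' must be understood scheme-theoretically --- equivalently, through the comodule/grading dictionary above --- rather than as the fixed points of a set of rational points; once that viewpoint is adopted the assertion is immediate, and it is exactly why the weight-grading argument, not a naive set-theoretic one, is the appropriate tool.
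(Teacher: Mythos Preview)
Your argument is correct and is precisely the reasoning the paper is gesturing at: the paper offers no separate proof for this proposition, merely asserting that the proof of \ref{torpro} already contains it, and your weight-grading computation via the identification $X^*(T[p]) = X^*(T)/pX^*(T)$ is exactly the content implicit there. Your explicit treatment of $T[p]$ as a diagonalizable (non-reduced) group scheme and the comodule/grading dictionary is more careful than anything the paper writes down, but it is the same idea.
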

%\mathbb{\mathrm{\mu\!\!\!\mu}}_{p,T}

If $\Theta$ is a vector field on a variety over a field of characteristic $p$, then $\Theta^p(f \cdot g) = f \Theta^p(g) + g \Theta^p(f)$ for all regular functions $f$ and $g$.  Therefore, there is a natural way to think of $\Theta^p$ as giving a derivation, this derivation is denoted $\Theta^{[p]}$.\\

\begin{df} Let $\mathfrak{t}$ be the Lie algebra of $T$.  The map $\mathfrak{t} \rightarrow Sym(\mathfrak{t}) : \Theta \mapsto \Theta^p - \Theta^{[p]}$ is a $p$-linear map,  or equivalently, a $\boldk$-linear map $\mathfrak{t}^{(1)} \rightarrow Sym(\mathfrak{t})$.  This map determines a map of $\boldk$-algebras, $Sym(\mathfrak{t}^{(1)}) \rightarrow Sym(\mathfrak{t})$.  Following the notation of \cite{BMR}, denote the induced map of varieties by $AS: \mathfrak{t}^* \rightarrow \mathfrak{t}^{*(1)}$.\\
\end{df}

\begin{rem}The map $\Theta \mapsto \Theta^{[p]}$ can also be thought of as part of the data associated to a restricted Lie algebra (see \cite{Jantzen}).\\
\end{rem}

\begin{lem}\label{cart} The space $Y$ fits into a Cartesian square,
 
$$\xymatrix{ Y \ar@{->}[r]^{\pi} \ar@{->}[d]^{\mu_Y} & T^*X^{(1)} \ar@{->}[d]^{\mu} \\
 		  \mathfrak{t}^* \ar@{->}[r]^{AS} & \mathfrak{t}^{*(1)} }$$	

where $\pi$ is the covering map and $\mu$ is the moment map.  In particular, $Y \cong T^*X^{(1)} \times_{\mathfrak{t}^{*(1)}} \mathfrak{t}^*$.
\end{lem}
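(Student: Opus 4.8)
The plan is to verify the Cartesian property locally on a $T$-stable affine cover of $X$, since the moment map $\mu$, the map $AS$, and the cover $\pi$ are all compatible with restriction to such opens. On a $T$-stable affine $U$, Proposition \ref{torpro} identifies $\mS(U)$ with $\mO_{T^*X^{(1)}}(U)[\{E_i\}]$ for a basis $\{E_i\}$ of $\mT_X(U)^T$, and by \ref{netlem} (in the model case $U \subset \mathbb{A}^n$) this is the Artin--Schreier algebra $\mO_{T^*X^{(1)}}(U)[T_i]/(T_i^p - T_i - x_i^p\partial_i^p)$ with $T_i \leftrightarrow E_i = x_i\partial_i$. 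So the left vertical and bottom arrow of the square, read as ring maps, are $\mO_{T^*X^{(1)}} \to \mS$ and $Sym(\mathfrak{t}^{(1)}) \to Sym(\mathfrak{t})$, the latter sending $\Theta^{(1)} \mapsto \Theta^p - \Theta^{[p]}$.

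First I would pin down the two maps of the span whose fiber product must be compared with $\mS(U)$. The moment map $\mu: T^*X^{(1)} \to \mathfrak{t}^{*(1)}$ is dual to the comoment $Sym(\mathfrak{t}^{(1)}) \to \mO_{T^*X^{(1)}}$, which sends a Frobenius-twisted invariant vector field $\Theta^{(1)}$ to the central element $\Theta^p \in Z(\mD)$ (with the appendix's sign/normalization conventions for moment maps); on $\mathbb{A}^n$ this reads $\Theta = E_i = x_i\partial_i \mapsto E_i^p = x_i^p\partial_i^p$. The map $AS: \mathfrak{t}^* \to \mathfrak{t}^{*(1)}$ is dual to $\Theta^{(1)} \mapsto \Theta^p - \Theta^{[p]}$ in $Sym(\mathfrak{t})$, and since $E_i^{[p]} = (x_i\partial_i)^{[p]} = x_i\partial_i = E_i$ for the Euler operators (this is the restricted-power identity $(x\partial)^p - x\partial = x^p\partial^p$ rewritten as $(x\partial)^{[p]} = x\partial$), the map $AS$ is $\Theta^{(1)} \mapsto \Theta^p - \Theta$, i.e. literally the classical Artin--Schreier substitution on the coordinate ring $Sym(\mathfrak{t}) = \boldk[\{E_i\}]$.

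The fiber product $T^*X^{(1)} \times_{\mathfrak{t}^{*(1)}} \mathfrak{t}^*$ then has coordinate ring $\mO_{T^*X^{(1)}}(U) \otimes_{Sym(\mathfrak{t}^{(1)})} Sym(\mathfrak{t})$, which, using the two presentations above, is $\mO_{T^*X^{(1)}}(U)[\{E_i\}]$ modulo the relations $E_i^p - E_i = x_i^p\partial_i^p$ — exactly the Artin--Schreier presentation of $\mS(U)$ from \ref{netlem}, with the element called $\Theta^p$ on the $T^*X^{(1)}$ side being identified, under the comoment, with the central element $x_i^p\partial_i^p \in Z(\mD)(U) = \mO_{T^*X^{(1)}}(U)$. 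So the square of rings commutes and exhibits $\mS(U)$ as the pushout; dualizing gives the Cartesian square of schemes over $U$. Gluing over a $T$-stable affine cover, using that $\mS$, $\mu$, $AS$, and $\pi$ are all functorial for restriction (the $T$-module structures being compatible with restriction, as noted before \ref{torpro}), yields the global Cartesian square, and the final isomorphism $Y \cong T^*X^{(1)} \times_{\mathfrak{t}^{*(1)}} \mathfrak{t}^*$ is immediate.

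The main obstacle I expect is bookkeeping the conventions: matching the normalization of the moment map $\mu$ (which the appendix fixes) with the explicit formula $\Theta^{(1)} \mapsto \Theta^p$ for its comoment, and confirming the $[p]$-operation computation $E_i^{[p]} = E_i$ so that $AS$ really collapses to the honest Artin--Schreier map rather than some twisted variant. Once those two identifications are in hand, the comparison of coordinate rings is a formal tensor-product manipulation and the gluing is routine.
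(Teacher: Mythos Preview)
Your approach is essentially identical to the paper's: reduce to $T$-stable affines, write out the comoment of $\mu$ and the map $AS^*$ in coordinates, and compare the resulting pushout of rings with the Artin--Schreier presentation of $\mS(U)$ from \ref{netlem}. One correction on the bookkeeping you flagged: the element $E_i^p = (x_i\partial_i)^p$ is \emph{not} central in $\mD$ and does not equal $x_i^p\partial_i^p$ (rather $E_i^p - E_i = x_i^p\partial_i^p$); the comoment of $\mu$ on $T^*X^{(1)}$ is simply the Frobenius twist of the ordinary comoment and sends the $i$-th generator of $Sym(\mathfrak{t}^{(1)})$ to $x_i^p\partial_i^p \in Z(\mD)$, which is exactly the paper's $\mu^*(T_i^p) = x_i^p\partial_i^p$.
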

\begin{proof}
The moment map can be thought of as being entirely induced by the map $\mathfrak{t} \rightarrow \mD_{X}$, via the natural action of $\mathfrak{t}$ as vector fields on $\mD_X$ (\cite[Section 1.1.4]{CG} or section 5 of this paper).  This description shows that the construction of the moment map is compatible with the gluing of the space $T^*X$ via the affine patches $T^*U$ for $U \subset X$ $T$-stable open affine.  Below it is shown that the image of this map lands in $\mS(U)$ for all $T$-stable open affine $U$.  Thus, the image lands in $\mS(X)$.  This permits definition of the map $\mu_Y^*$ as the induced map $Sym(\mathfrak{t}) \rightarrow \mS(X)$.  Since $\mathfrak{t}$ is affine, there is a corresponding map of varieties $\mu_Y: Y \rightarrow \mathfrak{t}^*$.\\

The above discussion also shows that it is enough to consider only the case when $X$ is an open subset of $\mathbb{A}^n$.  Upon further inspection, the statement will hold for arbitrary open subsets of $\mathbb{A}^n$ if and only if it holds for the case $X = \mathbb{A}^n$.\\

In this setting,
$$\begin{array}{ll}
Sym(\mathfrak{t}) \cong \boldk[T_1,...,T_n] & Sym(\mathfrak{t}^{(1)}) \cong \boldk[T_1^p,...,T_n^p] \\
\mu^*(T_i^p) = x_i^p \partial_i^p & \mu_Y^*(T_i)=x_i\partial_i \\
AS^*(T^p_i)=T_i^p-T_i & \mO_{T^*X^{(1)} \times_{\mathfrak{t}^{*(1)}} \mathfrak{t}^*} \cong Z(\mD)[T_1,...,T_n]/(\{T_i^p-T_i - x_i^p \partial_i^p\})\\
\end{array}$$
Now compare with \ref{netlem} to obtain the result.\\
\end{proof}

\begin{thm}\label{smainthm}
The Azumaya algebra $\mD$ is trivial when restricted to the fibers of the moment map.
\end{thm}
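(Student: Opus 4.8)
The plan is to deduce this from the \'etale trivialization of \ref{emainthm} by restricting it to a single fibre of $\mu$, using the Cartesian description of $Y$ in \ref{cart}. The essential observation is that, although a trivialization of an Azumaya algebra does \emph{not} descend along an arbitrary \'etale cover --- Brauer classes are routinely annihilated by \'etale base change, which is exactly what happens over the generic point here --- the cover $\pi\colon Y\to T^*X^{(1)}$ becomes \emph{split} once restricted to a fibre $F_\lambda=\mu^{-1}(\lambda)$ over a closed point $\lambda\in\mathfrak{t}^{*(1)}$. One can then transport the global splitting bundle $\mD$ to $F_\lambda$ along a section of the restricted cover.

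First I would produce such a section. By \ref{cart}, $Y\cong T^*X^{(1)}\times_{\mathfrak{t}^{*(1)}}\mathfrak{t}^*$, so for the inclusion $j\colon F_\lambda\hookrightarrow T^*X^{(1)}$ one gets $\pi^{-1}(F_\lambda)\cong F_\lambda\times_{\boldk}AS^{-1}(\lambda)$, where $AS^{-1}(\lambda)=Spec\,\boldk\times_{\mathfrak{t}^{*(1)}}\mathfrak{t}^*$. Since $AS$ is finite \'etale --- in coordinates it is the Artin--Schreier map $T_i\mapsto T_i^p-T_i$, cf.\ \ref{netlem} --- and $\boldk$ is algebraically closed, $AS^{-1}(\lambda)$ is a disjoint union of $p^n$ reduced $\boldk$-points. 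Picking any one of them, say $\nu$, identifies $F_\lambda\cong F_\lambda\times\{\nu\}$ with a connected component of $\pi^{-1}(F_\lambda)$, hence gives a morphism $s\colon F_\lambda\to Y$ with $\pi\circ s=j$.

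Next I would invoke \ref{amainlem} (equivalently \ref{emainthm}): $\pi^*\mD\cong\mEnd_{\mO_Y}(\mD)$, where on the right $\mD$ is regarded as an $\mO_Y=\mS$-module, which is locally free of rank $p^n$ (established above). Because this module is locally free of finite rank, pullback commutes with the formation of $\mEnd$, whence
$$\mD\big|_{F_\lambda}=j^*\mD=s^*\pi^*\mD\cong s^*\mEnd_{\mO_Y}(\mD)\cong\mEnd_{\mO_{F_\lambda}}\!\big(s^*\mD\big),$$
and $s^*\mD$ is locally free of rank $p^n$ on $F_\lambda$. Thus $\mD|_{F_\lambda}$ is a trivial Azumaya algebra with splitting bundle $s^*\mD$; since $\lambda$ was an arbitrary closed point, the theorem follows.

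I expect the only genuine obstacle to be the first step: recognizing that one must not merely restrict the \'etale cover but must extract a true section of it over the fibre, and that \ref{cart} combined with the solvability of Artin--Schreier equations over the algebraically closed field $\boldk$ is precisely what makes this possible. Everything after that is formal --- the change of rings already packaged in \ref{amainlem}, compatibility of $\mEnd$ with pullback for locally free sheaves of finite rank, and the routine checks that $s$ lands in $\pi^{-1}(F_\lambda)$ and that all the isomorphisms in sight respect the algebra structures.
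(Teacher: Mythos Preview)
Your proof is correct and follows essentially the same approach as the paper: choose a lift $h'\in\mathfrak{t}^*$ of $\lambda\in\mathfrak{t}^{*(1)}$ under $AS$, observe via the Cartesian square of \ref{cart} that $\mu_Y^{-1}(h')\cong\mu^{-1}(\lambda)$, and transport the splitting of \ref{emainthm} along this identification. The paper compresses all of this into the single line $\mD\otimes_{Z(\mD)}\mS\otimes_{\mS}k(h')\cong\mD\otimes_{Z(\mD)}k(h)$, whereas you spell out the section $s$ and the compatibility of $\mEnd$ with pullback explicitly, but the argument is the same.
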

\begin{proof}
Let $h \in \mathfrak{t}^{*(1)}$ and $h' \in \mathfrak{t}$ be such that $AS(h') = h$.\\

Consider that $\mD \otimes_{Z(\mD)} \mS \otimes_{\mS} k(h') \cong \mD \otimes_{Z(\mD)} k(h)$.  The left hand side of this equation is a trivial Azumaya algebra, so thus must be the right hand side.\\
\end{proof}

\section{Hypertoric varieties in positive characteristic}
The study of hypertoric varieties was initiated by R. Bielawski and A. Dancer (\cite{BD}), where they are called toric hyperkahler manifolds.  Their main appeal is that they are a hyperkahler version of toric varieties.  It turns out that the construction of the manifold (ignoring extra structures) is entirely algebraic in nature, even defined over $\mathbb{Z}$.  Therefore, it is natural to consider their reductions modulo $p$.  As they are varieties, it is also natural to investigate their structure after passing to the algebraic closure of $\mathbb{F}_p$.  By the work of \cite{PW} and \cite{BeK}, there is a good theory of modules and a Beilinson-Bernstein style localization in characteristic $0$.  This entices one to think about hypertoric varieties as being like the flag variety for a semi-simple Lie group.  Certain Azumaya algebras on these hypertoric varieties provide an avatar of the algebra $\mD_{G/B}^{\lambda}$.  In this section, hypertoric varieties in characteristic $p$ over an algebraically closed field will be defined, the Azumaya algebras constructed and investigated, and a localization theorem formulated. 

\subsection{The definition of a hypertoric variety}

Fix an integer $n$ and let $T = \mathbb{G}_m^n$.  There is a natural action of $T$ on $T^*\mathbb{A}^n$.  Let $\mmA$ be the data $(K,\alpha,\lambda)$ where $K \subset T$ is some connected subtorus, $\alpha \in X^*(K)$ is a character, and $\lambda \in \mathfrak{k}^*$.  Write $H$ for $T/K$ and $\mathfrak{h}$ for its Lie algebra.\\

The moment map with respect $K$, $\mu: T^*\mathbb{A}^n \rightarrow \mathfrak{t}^* \rightarrow \mathfrak{k}^*$, factors through the moment map with respect to $T$. See section 5 for more information on the conventions about moment maps.\\

Let $X$ be any $K$-variety and let $\mL_{X}$ be the trivial line bundle on $X$.  Any character $\chi$ of $K$ naturally determines an equivariant structure on $\mL_{X}$ and denote this equivariant line bundle $\mL_X(\chi)$.  Let $X^{\chi-ss}$ denote the set of semi-stable points of $X$ with respect $\mL_X(\chi)$.  Define $X\git_{\alpha} K = X^{\alpha-ss}\git K$ where the right-hand side is the usual GIT quotient.  It can be shown that $X\git_{\alpha}K$ is naturally isomorphic to $Proj(\oplus_{m \geq 0} \mO_X^{\alpha^{m}})$ where $\mO_X^{\alpha^m}$ is the submodule on which $K$ acts by character $\alpha^m$.  Refer to \cite{Mu} for further details on geometric invariant theory.\\

The hypertoric variety of $\mA$ is then defined as,

$$\mmM(\mmA) = \mu^{-1}(\lambda)\git_{\alpha}K$$

It is also commonly denoted $T^*\mathbb{A}^n\git \hspace{-2pt} \git_{(\lambda,\alpha)}K$ when working over $\mathbb{C}$.\\

Another variety to consider along with $\mmM(\mmA)$ is the categorical quotient $\mmM_0(\mmA) = \mu^{-1}(\lambda)/ K$.  Both of these schemes have a natural $H$ action.  The inclusion of the semi-stable points of the fiber into the fiber gives an $H$-equivariant map $v: \mmM(\mmA) \rightarrow \mmM_0(\mmA)$.  The deformation space $\widetilde{\mmM(K,\alpha)} = T^*\mathbb{A}^n \git_{\alpha}K$, which is a Lawrence toric variety, and its categorical quotient $\widetilde{\mmM_0(K)}= T^*\mathbb{A}^n \git K$ will also be used.  Notice that $\mmM(\mmA)$ and $\mmM_0(\mmA)$ define closed subvarieties of $\widetilde{\mmM(K,\alpha)}$ and $\widetilde{\mmM_0(K)}$ respectively.

\begin{df}\label{grdef} $\mmA$ is called \textbf{smooth} if $K$ acts freely on $\mu^{-1}(\lambda)^{\alpha-ss}$ and the quotient is smooth.  In which case, the map $v$ is a symplectic resolution.
\end{df}
See section five for combinatorial criteria on $\mmA$ to ensure it is smooth.

\begin{pro} If $K \neq T$ and $\mmA$ is smooth, then $\mmM(\mmA)$ and $\mmM_0(\mmA)$ are irreducible.
\end{pro}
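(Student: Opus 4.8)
The plan is to deduce irreducibility from the irreducibility of the fiber $\mu^{-1}(\lambda) \subset T^*\mathbb{A}^n$, together with the fact that taking GIT quotients (or categorical quotients) of an irreducible variety by a connected group preserves irreducibility. So the first and main step is to show that $\mu^{-1}(\lambda)$ is irreducible when $K \neq T$ and $\mmA$ is smooth. Since $\mmM(\mmA) = \mu^{-1}(\lambda)^{\alpha\text{-}ss} \git K$ is a quotient of the open subscheme $\mu^{-1}(\lambda)^{\alpha\text{-}ss} \subseteq \mu^{-1}(\lambda)$, and $\mmM_0(\mmA) = \mu^{-1}(\lambda)/K$ is a quotient of all of $\mu^{-1}(\lambda)$, irreducibility of $\mu^{-1}(\lambda)$ passes to both: a nonempty open subscheme of an irreducible scheme is irreducible, and the image of an irreducible scheme under any morphism is irreducible. (One should note the semistable locus is nonempty — otherwise $\mmM(\mmA)$ is empty — which is implicit in the smoothness hypothesis, since $K$ acting freely on $\mu^{-1}(\lambda)^{\alpha\text{-}ss}$ with smooth quotient is only interesting when that locus is nonempty; in the degenerate empty case the statement is vacuous or should be read as excluding it.)

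For the irreducibility of $\mu^{-1}(\lambda)$, I would argue as follows. The moment map $\mu: T^*\mathbb{A}^n \to \mathfrak{k}^*$ is a flat morphism — this is the standard fact that the components of a moment map for a torus action on $T^*\mathbb{A}^n$ form a regular sequence, so $\mu$ is flat with fibers of pure dimension $2n - \dim K$. The total space $T^*\mathbb{A}^n \cong \mathbb{A}^{2n}$ is irreducible, and $\mu^{-1}(\lambda)$ is a complete intersection of dimension $2n - \dim K$; since $K \neq T$ we have $\dim K \leq n-1$, so the fiber has dimension $\geq n+1 \geq 2$. To get irreducibility rather than just equidimensionality, I would use that the generic fiber of $\mu$ is irreducible (e.g. because over an open dense subset of $\mathfrak{k}^*$ the $K$-action on the fiber is free with irreducible quotient, or by an explicit computation with the equations $\sum_i \pm x_i \partial_i = \lambda_j$), and then invoke smoothness of $\mmA$: the hypothesis says $K$ acts freely on $\mu^{-1}(\lambda)^{\alpha\text{-}ss}$ with smooth quotient, so the semistable locus of the fiber is smooth, hence a disjoint union of its irreducible components, each of dimension $2n - \dim K$. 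A connectedness argument — for instance via the residual $H = T/K$ action and the fact that a smooth Lawrence toric variety $\widetilde{\mmM(K,\alpha)}$ into which $\mmM(\mmA)$ embeds is connected, or via a direct combinatorial description of the hyperplane arrangement attached to $\mmA$ — rules out multiple components.

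The main obstacle is precisely this last point: equidimensionality and flatness come for free, but promoting "smooth of pure dimension $d$" to "irreducible" genuinely requires the hypothesis $K \neq T$ and some global input, since for $K = T$ the fiber $\mu^{-1}(\lambda)$ can acquire several components (this is the familiar phenomenon that the core of a hypertoric variety, or the central fiber in the $K=T$ case, is reducible). I expect the cleanest route is to reduce to the deformation space: $\widetilde{\mmM(K,\alpha)} = T^*\mathbb{A}^n \git_\alpha K$ is a smooth toric variety (Lawrence toric variety) and is therefore irreducible, and $\mmM(\mmA)$ sits inside it as the zero fiber of the residual moment map $\widetilde{\mmM(K,\alpha)} \to \mathfrak{k}^* $ (shifted by $\lambda$); the condition $K \neq T$ forces this residual moment map to be nonconstant in enough directions that its zero fiber stays connected, and combined with the smoothness from Definition \ref{grdef} connected plus smooth gives irreducible. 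I would present the argument in that order — reduce to $\mu^{-1}(\lambda)$, establish smoothness and equidimensionality from the hypotheses, then establish connectedness via the Lawrence toric embedding — flagging the connectedness step as the one deserving the most care, with the combinatorics of the associated arrangement (see section five) doing the real work.
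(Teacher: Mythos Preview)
Your route differs substantially from the paper's and leaves the crucial step unfinished. You aim to prove $\mu^{-1}(\lambda)$ itself is irreducible and then descend to both quotients; the paper never attempts this. Instead it works directly on the quotient $\mmM(\mmA)$: since $\mmM(\mmA)$ is smooth by hypothesis, irreducibility is equivalent to connectedness, and for connectedness the paper exhibits the concrete dense open $U = D_f \cap \mu^{-1}(\lambda)\git_\alpha K$ with $f = \prod_i x_i\xi_i$. On $U$ every $x_i\xi_i$ is nonzero, so the residual moment map $U \to \mathfrak{h}^*$ is surjective with connected fibers (they are single $H$-orbits, since $T$ acts freely on $D_f$). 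A short topological argument --- connected components of the quasi-compact $U$ map to disjoint closed subsets of the connected space $\mathfrak{h}^*$ --- finishes it. Irreducibility of $\mmM_0$ then follows because $v:\mmM \to \mmM_0$ is proper and birational, hence surjective.

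Your connectedness step is exactly where the gap lies. You correctly flag it as the heart of the matter, but neither of your proposed routes is an argument: showing that a fiber of $\widetilde{\mmM(K,\alpha)} \to \mathfrak{k}^*$ is connected is no easier than the original problem, and ``the combinatorics of the associated arrangement'' is a placeholder. There is also a structural cost to going through $\mu^{-1}(\lambda)$: the full fiber need not be smooth (only its semistable locus is, by hypothesis), so ``smooth and connected implies irreducible'' is unavailable upstairs, and you would have to supplement connectedness with normality or a codimension bound on the singular locus. The paper's decision to work on the already-smooth quotient sidesteps this entirely, and the open set $D_f$ together with the map to $\mathfrak{h}^*$ (note: $\mathfrak{h}^*$, not $\mathfrak{k}^*$) is precisely the missing concrete input --- and the place where $K \neq T$ is used, since otherwise $\mathfrak{h}^* = 0$.
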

\begin{proof}
Let $f=\prod_i x_i\xi_i$ then the set $U=D_f \cap \mu^{-1}(\lambda) \git_{\alpha} K$ is open and dense in $\mmM(\mmA)$.  As $\mmM(\mmA)$ is smooth, it is irreducible if and only if it is connected since every regular local ring is a domain.  Since $U$ is dense, $\mmM(\mmA)$ is connected if and only if $U$ is connected.  $U$ is (quasi-)compact and equipped with a surjective map $\mmM(\mmA) \rightarrow \mathfrak{h^*}$.  The fibers of this map are connected so each fiber must live entirely in one connected component of $U$, quasi-compactness implies the image of each connected component determines a closed subset of $\mathfrak{h^*}$.  They are all disjoint and $\mathfrak{h}^*$ is connected, so all but one is empty.
\end{proof}

\subsection{Descent and the functor of invariants}
Let $\mmA = (K, \alpha, \lambda^p-\lambda)$ be smooth data, then $\mmM(\mmA)^{(1)} = {\mu^{-1}(\lambda)}^{(1)}\git_{\alpha^p} K^{(1)}$.  It is well-known that since the action of $K^{(1)}$ is free on the $\alpha^p$-semi-stable points of ${\mu^{-1}(\lambda)}^{(1)}$, the category of sheaves on $\mmM(\mmA)^{(1)}$ is precisely the category of $K^{(1)}$-equivariant sheaves on the $\alpha^p$-semi-stable points of ${\mu^{-1}(\lambda)}^{(1)}$.  The equivalence is given by pulling back along the quotient map.  Its inverse is the functor of $K^{(1)}$-invariants.\\

This section aims to investigate $K$-equivariant sheaves on the $\alpha^p$-semi-stable points of ${\mu^{-1}(\lambda)}^{(1)}$ and the functor of $K$-invariants.  As $K$ does not act freely, the aforementioned equivalence needs modification.  To assist the stack-naive reader, the following presentation will only cover the case that the sheaves are supported on a closed $K^{(1)}$-orbit.  This case is easily seen to be equivalent to understanding the category of $K$-equivariant sheaves on $K^{(1)}$.  It will be shown that the category of $K$-equivariant sheaves on $K^{(1)}$ is equivalent to the category of $K[p]$-equivariant sheaves on a point.  Under this correspondence, the functor of $K$-invariants corresponds to the functor of $K[p]$-invariants.

\begin{lem}\label{invarlem} The category, $Coh_K(K^{(1)})$, of $K$-equivariant coherent sheaves on $K^{(1)}$ is equivalent to the category of finite dimensional algebraic representations of $K[p]$ via the functors:

$$F:V \mapsto (\mO_K \otimes_{\boldk} V)^{K[p]}$$
$$G:\mM \mapsto i_e^*\mM$$
\end{lem}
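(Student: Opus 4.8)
The plan is to exhibit $K^{(1)}$ as the quotient of $K$ by the finite (infinitesimal) group scheme $K[p]$ and then run faithfully flat descent while carrying along the left-translation action of $K$. First I would verify that the Frobenius $F_{K}\colon K \to K^{(1)}$ is a $K[p]$-torsor for the fppf topology. Writing $K \cong \mathbb{G}_{m}^{d}$ with coordinates $t_{1},\dots,t_{d}$, the comorphism of $F_{K}$ is $t_{i}\mapsto t_{i}^{p}$, so $\mO_{K}$ is free of rank $p^{d}$ over $\mO_{K^{(1)}}$ with basis the monomials $t^{I}$, $I\in\{0,\dots,p-1\}^{d}$; faithful flatness is immediate, and $K\times K[p]\to K\times_{K^{(1)}}K$, $(g,\epsilon)\mapsto(g,g\epsilon)$, is an isomorphism by a direct coordinate check. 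Consequently $K^{(1)}\cong K/K[p]$, the stabilizer in $K$ (acting through $F_{K}$ and translation) of the identity point $e\in K^{(1)}$ is exactly $K[p]=\ker F_{K}$, the fiber $i_{e}^{*}\mO_{K}=\mO_{K}\otimes_{\mO_{K^{(1)}}}\boldk$ is $\mO_{K[p]}$, and $\mO_{K}^{K[p]}=\mO_{K^{(1)}}$ for the right-translation action. In particular $i_{e}^{*}\mM$ naturally carries a $K[p]$-action for any $\mM\in Coh_{K}(K^{(1)})$, and $(\mO_{K}\otimes_{\boldk}V)^{K[p]}$ is a coherent, locally free $\mO_{K^{(1)}}$-module of rank $\dim V$ with a residual $K$-equivariant structure from left translation, so $F$ and $G$ are well defined.

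Next, faithfully flat descent along the affine torsor $F_{K}$ gives an equivalence between $Coh(K^{(1)})$ and the category $Coh_{K[p]}(K)$ of $K[p]$-equivariant coherent sheaves on $K$ (for right translation), with quasi-inverse functors $F_{K}^{*}$ and $\mathcal{N}\mapsto (F_{K*}\mathcal{N})^{K[p]}$. The left-translation action of $K$ on itself commutes with this right $K[p]$-action and, via $F_{K}$, covers the $K$-action on $K^{(1)}$; hence the equivalence upgrades to one between $Coh_{K}(K^{(1)})$ and coherent sheaves on $K$ carrying commuting equivariant structures for left $K$-translation and right $K[p]$-translation. Since $K$ acts simply transitively on itself by left translation, taking the fiber at $e$ identifies $K$-equivariant coherent sheaves on $K$ with finite-dimensional $\boldk$-vector spaces, with quasi-inverse $W\mapsto\mO_{K}\otimes_{\boldk}W$; under this identification the remaining right $K[p]$-equivariant structure is forced to be $(\text{translation on }\mO_{K})\otimes(\text{coaction on }W)$, i.e.\ a $K[p]$-representation on $W$. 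Composing, one obtains $Coh_{K}(K^{(1)})\simeq$ (finite-dimensional representations of $K[p]$).

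Finally I would unwind this composite equivalence and check it is given by $F$ and $G$. Running the inverse chain on a representation $V$ of $K[p]$ produces $\mO_{K}\otimes_{\boldk}V$ with its diagonal $K[p]$-equivariant structure and then its invariants $(\mO_{K}\otimes_{\boldk}V)^{K[p]}$, which is $F(V)$; running the chain on $\mM\in Coh_{K}(K^{(1)})$ produces $F_{K}^{*}\mM$ and then its fiber at $e$, namely $i_{e}^{*}\mM$ with its $K[p]$-action (using $F_{K}(e)=e$), which is $G(\mM)$. The isomorphism $G\circ F\cong\mathrm{id}$ is the identification of the fiber of the associated bundle $(\mO_{K}\otimes_{\boldk}V)^{K[p]}$ at the base point with $V$, and $F\circ G\cong\mathrm{id}$ follows by pulling the $K$-equivariance isomorphism for $\mM$ back along $(\mathrm{id}_{K},i_{e})\colon K\to K\times K^{(1)}$ to get a $K[p]$-equivariant isomorphism $F_{K}^{*}\mM\cong\mO_{K}\otimes_{\boldk}i_{e}^{*}\mM$ and then taking $K[p]$-invariants.

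The step I expect to demand the most care is not any single computation but the bookkeeping of the two commuting equivariance structures — the "$K$ acts, $K[p]$ descends" picture — together with the exactness and flat-base-change properties of $(-)^{K[p]}$ needed to see the composite really is an equivalence of categories (and not just a pair of adjoints). This is unproblematic here precisely because $F_{K}$ is a genuine torsor, so descent has no cohomological obstruction. Since the whole situation is affine, the same argument can be phrased entirely in terms of the Hopf algebra $\mO_{K}$, its subalgebra $\mO_{K^{(1)}}=\mO_{K}^{p}$ (with induced coaction), finitely generated comodule-modules, and the base change $i_{e}^{*}\mO_{K}=\mO_{K[p]}$; this may be the more palatable formulation for the intended audience and is the version I would ultimately write out.
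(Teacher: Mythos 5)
Your proof is correct, but it takes a different route from the one written in the paper. You realize $K^{(1)}$ as the quotient of $K$ by the Frobenius kernel $K[p]$, run fppf descent along the torsor $F_K\colon K\to K^{(1)}$, and then trivialize the residual left $K$-equivariance using the simply transitive action of $K$ on itself; in effect you have written out in full the identification $[pt/K[p]]\cong[K^{(1)}/K]$ that the paper only mentions in the remark following the lemma. The paper's own proof is more elementary and avoids descent machinery altogether: it takes $F$ and $G$ as given, notes that $G\circ F\cong \mathrm{id}$ is a direct computation, and for $F\circ G\cong\mathrm{id}$ constructs the natural comparison map $\mM \rightarrow (\mO_K\otimes_{\boldk} i_e^*\mM)^{K[p]}$ from the equivariant structure and then observes that $G$ is conservative because $K$ acts transitively on $K^{(1)}$, so applying $G$ (where the map visibly becomes an isomorphism) suffices. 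The trade-off is the expected one: your argument is longer and requires the bookkeeping of the two commuting equivariant structures (the step you yourself flag, e.g. the claim that a right $K[p]$-equivariant structure on $\mO_K\otimes_{\boldk}W$ commuting with left translation is forced to come from a representation on $W$, which deserves the short matrix-coefficient argument), but it is more conceptual and generalizes immediately to any quotient by a free action, whereas the paper's check is shorter, self-contained, and tailored to the transitive-action situation at hand. Both yield the same functors, including the agreement of your fiber-at-$e$ of $F_K^*\mM$ with the paper's $G(\mM)=i_e^*\mM$ via $F_K(e)=e$.
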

\begin{proof}
To clarify the functors above, $(\mO_K \otimes_{\boldk} V)^{K[p]}$ are the invariants with respect to the anti-diagonal action and is given the natural left $K$-equivariant structure.  $i_e: pt \rightarrow K^{(1)}$ is the inclusion of the identity and if $\phi$ is the $K$-equivariant structure on $\mM$ then $i_e^*\mM$ obtains $K[p]$-representation structure by restricting the action and projection maps to the fiber above $e$.

$G \circ F \cong id$ is a straight-forward computation.  To confirm that $F \circ G \cong id$, consider the natural map 
$$\mM \rightarrow a^*\mM \shortstack{$\phi$ \\ $\longrightarrow$} p_2^*\mM \cong \mO_{K} \otimes_{\boldk} \mM \rightarrow \mO_K \otimes_{\boldk} i_e^*\mM \supset (\mO_K \otimes_{\boldk} i_e^*\mM)^{K[p]}$$  The image of this map lands in the $K[p]$-invariants and is an $K$-equivariant map between $K$-acting transitively on $K^{(1)}$ implies that $G$ is conservative (i.e., reflects isomorphisms).  $G$ applied to this equivariant map is an isomorphism, therefore the map was an isomorphism.

\end{proof}
\begin{rem} For the reader comfortable with stacks, the previous lemma is simply descent under the isomorphism $[pt/K[p]] \cong [K^{(1)}/K]$.
\end{rem}

\begin{cor}\label{invarcor} $End^K_{\mO_{K^{(1)}}}(\mM) \cong End_{\boldk}^{K[p]}(i^*_e \mM)$ and $(i^*_e \mM)^{K[p]} \cong i^*_e\mM^K$.
\end{cor}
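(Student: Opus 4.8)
The plan is to read both isomorphisms off Lemma~\ref{invarlem}, which supplies mutually quasi-inverse equivalences $F$ and $G=i_e^*$ between $Coh_K(K^{(1)})$ and the category $Rep(K[p])$ of finite-dimensional algebraic $K[p]$-representations.

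\textbf{First isomorphism.} By definition $End^K_{\mO_{K^{(1)}}}(\mM)$ is the endomorphism ring of $\mM$ computed in the category $Coh_K(K^{(1)})$, while $End^{K[p]}_{\boldk}(i_e^*\mM)$ is the endomorphism ring of $i_e^*\mM$ in $Rep(K[p])$. Since $G$ is an equivalence it is fully faithful, so $f\mapsto G(f)$ is a bijection from $\mathrm{Hom}_{Coh_K(K^{(1)})}(\mM,\mM)$ onto $\mathrm{Hom}_{K[p]}(i_e^*\mM,i_e^*\mM)$; being a functor, $G$ preserves composition and identities, so this bijection is a $\boldk$-algebra isomorphism. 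Nothing beyond functoriality of $G$ is needed.

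\textbf{Second isomorphism.} Here I would show that $G$ carries the functor of $K$-invariants to the functor of $K[p]$-invariants. Write $\mM\cong F(V)$ with $V=i_e^*\mM$, using $F\circ G\cong\mathrm{id}$. The sheaf $F(V)=(\mO_K\otimes_{\boldk}V)^{K[p]}$ is obtained by descent along the $K[p]$-torsor $q\colon K\to K^{(1)}=K/K[p]$, and its residual $K$-equivariant structure comes from the left-translation action of $K$ on $\mO_K$, which commutes with the anti-diagonal $K[p]$-action because $K$ is abelian and acts trivially on the factor $V$. Passing to global sections along the finite flat map $q$ yields $\Gamma(K^{(1)},F(V))\cong(\boldk[K]\otimes_{\boldk}V)^{K[p]}$ with $K[p]$ acting anti-diagonally. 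Now take the residual $K$-invariants; since the $K$- and $K[p]$-actions commute, invariants may be computed in either order, and $\boldk[K]^{K}=\boldk$ for left translation collapses $(\boldk[K]\otimes_{\boldk}V)^{K}$ to $V$ with its residual $K[p]$-action equal to the original representation. Hence $\mM^K\cong V^{K[p]}=(i_e^*\mM)^{K[p]}$. After the reduction to $K^{(1)}$ the sheaf $\mM^K$ lives on the point $K^{(1)}\git K$, so applying $i_e^*$ changes nothing (equivalently: $i_e^*\mM^K$ carries the trivial $K[p]$-action, as does the invariant space $(i_e^*\mM)^{K[p]}$, so it suffices to match dimensions, which the computation above does), and the claim follows.

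\textbf{Main obstacle.} The delicate point is the bookkeeping around $K[p]$, which is an infinitesimal group scheme $\mu_p^{\dim K}$ in characteristic $p$: ``invariants'' must be read as $\mathrm{Hom}_{K[p]}(\boldk,-)$, which is only left exact, and one must check it commutes with $\Gamma(K^{(1)},-)$ along the torsor $q$ and with the commuting $K$-action. All of this is routine descent. An alternative that sidesteps it entirely is to invoke the Remark following Lemma~\ref{invarlem}: $G$ is precisely the descent equivalence for the isomorphism of stacks $[pt/K[p]]\cong[K^{(1)}/K]$, and under any such equivalence the pushforward-to-a-point functor is intertwined; that functor is $(-)^{K[p]}$ on $[pt/K[p]]$ and $(-)^{K}$ on $[K^{(1)}/K]$, which gives the second isomorphism with no computation, while the first follows from full faithfulness of $G$ exactly as above.
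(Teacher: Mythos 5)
Your proposal is correct and follows the same route the paper intends: the corollary is read off the equivalence of Lemma \ref{invarlem}, with the first isomorphism coming from full faithfulness of $G=i_e^*$ and the second from the fact that this equivalence intertwines the functor of $K$-invariants with that of $K[p]$-invariants (your explicit descent computation along $K\to K^{(1)}$ just fills in the detail the paper leaves implicit). No gaps.
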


\subsection{The hypertoric enveloping algebra in positive characteristic}
The hypertoric enveloping algebra and its central reductions were first studied in \cite{MVdB} in characteristic $0$.  For a less general introduction, see section three of \cite{PW}.

In this section, fix smooth data $\mmA=(K,\alpha,\lambda)$ and write simply $\mmM$ for $\mmM(\mmA)$ and $\mmM_0$ for $\mmM_0(\mmA)$.\\
Similar to the situation considered in \ref{cart}, construct the following commutative diagram ($Y_n$ as in \ref{Scon}):

$$\xymatrix{ {T^*\mathbb{A}^n}^{(1)} \ar@{<-}[r] \ar@{->}[d]^{\mu^{\mathfrak{k}}} & Y_{\mathfrak{k}} =_{def} {T^*\mathbb{A}^n}^{(1)} \times_{\mathfrak{k}^{*(1)}} \mathfrak{k}^* \ar[d]^{\mu^{\mathfrak{k}}_{Y_{\mathfrak{k}}}} \ar@{<-}[r] & Y_n \ar@{->}[dl]^{\mu^{\mathfrak{k}}_{Y_n}}\\
							\mathfrak{k}^{*(1)}   \ar@{<-}[r]^{AS} & \mathfrak{k}^{*} & }$$

Since the context of $K$ is clear, the notation $\mathfrak{k}$ will be dropped and the vertical maps denoted simply by $\mu$, $\mu_Y$ and $\mu_{Y_n}$.\\
							
\begin{dfc} Recall the $K$-equivariant sheaf of algebras $\mD$ on $T^*\mathbb{A}^n$.  Define $\mA_{\lambda} = i^*_{\lambda}(\mD|_{\alpha-ss}^K)$ to be the (sheaf of) hypertoric enveloping algebras associated to $\mmA$, where $i^*_{\lambda}: \mu_Y^{-1}(\lambda) \git_{\alpha^p} K^{(1)} \rightarrow Y_{\mathfrak{k}} \git_{\alpha^p} K^{(1)}$.  It is a sheaf of algebras on $\mu_Y^{-1}(\lambda)^{(1)} \git_{\alpha^p} K^{(1)}$.  By the Cartesian property of the square in the above diagram, this space is naturally isomorphic to $\mmM^{(1)}$.  Call its global sections, $U_{\lambda} = \Gamma(\mmM^{(1)},\mA_{\lambda})$, the hypertoric enveloping algebra reduction associated to $\mmA$.  One can also consider the deformation algebra $\widetilde{\mA} = \mD|_{\alpha-ss}^K$ living on $\widetilde{\mmM}$.
\end{dfc}

\begin{rem} Symbolically, the global sections of $\widetilde{\mA}$ is the hypertoric enveloping algebra first considered in \cite{MVdB}, but in positive characteristic.  Let $Z_{HC}$, the (Harish-Chandra) subalgebra of $\mD(\mathbb{A}^n)^K$, be the subalgebra generated by the image of $Sym(\mathfrak{k})$. Any choice of $\lambda$ determines a central character $Z_{HC} \rightarrow \boldk$.  The global sections of the hypertoric enveloping algebra reduction $\mA_{\lambda}$ is given by $\mD(X)^K/(\mD(X)^K Ker(\lambda))$.  The latter is the central reduction considered in \cite{PW} and \cite{MVdB}.

%Maybe I should prove the last statement
\end{rem} 

\begin{pro}
The sheaf $\mA_{\lambda}$ is an $H$-equivariant sheaf of algebras.
\end{pro}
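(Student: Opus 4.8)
The plan is to follow the $T$-equivariant structure on $\mD=\mD_{\mathbb{A}^n}$ through the operations used to build $\mA_{\lambda}$ --- pullback to $Y_{\mathfrak{k}}$, restriction to the $\alpha^p$-semistable locus, $K$-invariants, and pullback along $i_{\lambda}$ --- checking that at each stage the equivariance descends to an $H=T/K$-equivariance compatible with the algebra structure. The compatibility (cocycle) conditions are inherited automatically from functoriality, so the only real work is that an $H$-action survives each step.

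First, recall from the discussion preceding Proposition \ref{torpro} that $\mD$ is a $T$-equivariant sheaf of algebras: $T$ acts compatibly on the generators $\mO_{\mathbb{A}^n}$ and $\mT_{\mathbb{A}^n}$ of $\mD$ by algebra, respectively module, automorphisms, and compatibly with restriction. Over its center $Z(\mD)=\mO_{{T^*\mathbb{A}^n}^{(1)}}$ this structure lies over the natural $T^{(1)}$-action on ${T^*\mathbb{A}^n}^{(1)}$ via the Frobenius isogeny $F\colon T\to T^{(1)}$. Since $Y_{\mathfrak{k}}={T^*\mathbb{A}^n}^{(1)}\times_{\mathfrak{k}^{*(1)}}\mathfrak{k}^{*}$ is a fibre product of $T^{(1)}$-schemes with $T^{(1)}$ acting trivially on $\mathfrak{k}^{*}$, the pullback of $\mD$ to $Y_{\mathfrak{k}}$ is again a $T$-equivariant sheaf of algebras, and $\mu_Y\colon Y_{\mathfrak{k}}\to\mathfrak{k}^{*}$ is $T$-invariant, so $\mu_Y^{-1}(\lambda)$ is $T$-stable.

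Next, $T$ is abelian, hence $K\trianglelefteq T$, and descent of invariants under a normal subgroup applies: the sheaf of $K$-invariants carries a residual $T/K=H$-equivariant structure (the $K$-action on it being trivial), and since $K$ acts by algebra automorphisms this $H$-equivariant sheaf is still a sheaf of algebras. This is the global counterpart of Lemma \ref{invarlem} and Corollary \ref{invarcor}; equivalently it is descent along $[(-)/K]$ with residual $[(-)/H]$-structure. Moreover the $\alpha^p$-semistable locus in $Y_{\mathfrak{k}}$ is $T$-stable --- since $T$ is abelian, $t\in T$ sends a $K$-semiinvariant of weight $\alpha^{pm}$ to another of the same weight, so semistability is preserved (alternatively see Section 5). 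Hence restricting to the semistable locus and taking $K$-invariants --- the operation appearing in the definition of $\widetilde{\mA}$, which by the above carries a residual $H$-action (cf.\ \ref{invarlem}, \ref{invarcor}) --- produces from $\mD$ an $H$-equivariant sheaf of algebras $\widetilde{\mA}=\mD|_{\alpha-ss}^{K}$ on $Y_{\mathfrak{k}}\git_{\alpha^p}K^{(1)}$.

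Since $\mu_Y^{-1}(\lambda)$ is $T$-stable, $i_{\lambda}\colon\mu_Y^{-1}(\lambda)\git_{\alpha^p}K^{(1)}\hookrightarrow Y_{\mathfrak{k}}\git_{\alpha^p}K^{(1)}$ is an $H$-equivariant closed immersion, and pullback of an $H$-equivariant sheaf of algebras along an $H$-equivariant map is again one; so $\mA_{\lambda}=i_{\lambda}^{*}\widetilde{\mA}$ is an $H$-equivariant sheaf of algebras, and the natural identification of $\mu_Y^{-1}(\lambda)\git_{\alpha^p}K^{(1)}$ with $\mmM^{(1)}$ used in the definition of $\mA_{\lambda}$ intertwines the residual $H$-action with the natural $H$-action on $\mmM^{(1)}$, giving the asserted structure. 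The step needing the most care is the middle one: that $K$-invariants of the $T$-equivariant algebra $\mD$, after restriction to the semistable locus and GIT descent, really carry an $H$-equivariant structure over the natural $H$-action on $\mmM^{(1)}$ --- this uses $K\trianglelefteq T$ (automatic), $T$-stability of the semistable locus, and consistent bookkeeping of the Frobenius twists on source and target. Everything else is formal functoriality of equivariant structures.
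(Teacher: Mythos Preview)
Your argument is correct and carefully tracks the equivariance through each step of the construction. The paper, however, states this proposition without proof; it is treated as a routine verification left to the reader. So there is no ``paper's proof'' to compare against in any substantive sense.

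That said, your write-up makes explicit exactly the points the paper is implicitly relying on: that $\mD$ carries a full $T$-equivariant (not merely $K$-equivariant) structure, that the $\alpha^p$-semistable locus is $T$-stable because $T$ is abelian, that $K$-invariants of a $T$-equivariant sheaf of algebras inherit an $H=T/K$-equivariant algebra structure, and that $i_{\lambda}$ is $H$-equivariant since $\mu_Y^{-1}(\lambda)$ is $T$-stable. These are the right checkpoints, and your bookkeeping with the Frobenius twists is appropriate. One minor comment: you could streamline by noting that the paper's Definition--Construction already presents $\widetilde{\mA}=\mD|_{\alpha\text{-}ss}^{K}$ as living on (a space identified with) $\widetilde{\mmM}^{(1)}\times_{\mathfrak{k}^{*(1)}}\mathfrak{k}^{*}$, so the pullback-to-$Y_{\mathfrak{k}}$ step is already baked into the notation rather than a separate operation you need to justify; but this does not affect correctness.
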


\begin{thm}\label{mth2}(Main Theorem 2)\label{mainthm} The sheaf $\mA_{\lambda}$ is an Azumaya algebra on $\mmM^{(1)}$.  It is $H$-equivariantly trivialized by the $dim(\mathfrak{h}^*)$-degree \'etale cover $\mmM_n=\mu_{Y_n}^{-1}(\lambda) \git_{\alpha} K^{(1)}$.
\end{thm}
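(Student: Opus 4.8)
The plan is to run the argument of \ref{amainlem} after transporting everything to a single moment fibre on $T^*\mathbb{A}^{n(1)}$ by the descent dictionary of \ref{invarlem}--\ref{invarcor}. Write $\mf k,\mf h$ for the Lie algebras of $K,H$. The structural point that makes the descent available is that $K$ acts on $T^*\mathbb{A}^{n(1)}$, on $Y_n$, and on $\mf k^*$ \emph{through} $K^{(1)}$ (coadjoint actions of tori being trivial), so $K[p]=\ker(K\to K^{(1)})$ acts trivially on all of these and hence fibrewise on every equivariant sheaf; consequently $(-)^K$ of such a sheaf is computed by descending along the (free, by smoothness of $\mmA$) $K^{(1)}$-actions and then taking fibrewise $K[p]$-invariants, which is exactly \ref{invarcor}.

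I would first dispose of the statement about the cover. Since $K\subset T$ is a subgroup, $\mf k$ is a restricted subalgebra of $\mf t$, so the two $AS$-maps are compatible; the Cartesian square \ref{cart} then shows that the fibre over $\lambda$ of the composite $\mu_{Y_n}\colon Y_n\to\mf t^*\to\mf k^*$ is $T^*\mathbb{A}^{n(1)}\times_{\mf t^{*(1)}}\{h\in\mf t^*:h|_{\mf k}=\lambda\}$, and that $\mu_{Y_n}^{-1}(\lambda)\to\mu^{-1}(AS(\lambda))\subset T^*\mathbb{A}^{n(1)}$ is the base change of the Artin--Schreier cover $\{h:h|_{\mf k}=\lambda\}\to\{g:g|_{\mf k}=AS(\lambda)\}$ of affine spaces, hence finite étale of degree $p^{\dim\mf h^*}$ and $K^{(1)}$-equivariant. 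By \ref{grdef} the action of $K^{(1)}$ on the $\alpha$-semistable locus of $\mu^{-1}(AS(\lambda))$ is free, hence so is its action on the étale cover $\mu_{Y_n}^{-1}(\lambda)$, and passing to $K^{(1)}$-quotients yields the finite étale degree-$p^{\dim\mf h^*}$ map $\mmM_n\to\mmM^{(1)}$, which is $H$-equivariant because every map used is.

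For the trivialization, let $q\colon P\to\mmM_n$ be the $K^{(1)}$-torsor, where $P$ is the preimage in $Y_n$ of the $\alpha$-semistable locus of $\mu^{-1}(AS(\lambda))$, and let $\mathcal B=\mD|_P$ be the restriction of the splitting bundle $\mD$ of \ref{amainlem} (the locally free rank-$p^n$ right $\mS$-module $\mD$, viewed on $Y_n=\mathrm{Spec}\,\mS$). Since $\mathcal B$ is $K$-equivariant, $K[p]$ acts fibrewise, and left multiplication by a $K[p]$-invariant differential operator commutes with the right $\mS$-action and is $K[p]$-equivariant, hence preserves the subsheaf $\mathcal B^{K[p]}$; moreover on $P$ the $\mf k$-Euler operators already act on $\mathcal B^{K[p]}$ by the scalars that $\lambda$ prescribes. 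Thus $\mathcal B^{K[p]}$ descends along $q$ to a locally free $\mO_{\mmM_n}$-module $\mathcal E$ carrying, via these left multiplications, an action of the pullback of $\mA_\lambda$; unwinding the construction of $\mA_\lambda$ and using \ref{invarcor} this gives a map of $\mO_{\mmM_n}$-algebras from the pullback of $\mA_\lambda$ to $\mEnd_{\mO_{\mmM_n}}(\mathcal E)$. Both sides are locally free over $\mO_{\mmM_n}$, so by \ref{nakpro} it suffices to check the map is an isomorphism at every closed point.

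A closed point of $\mmM_n$ is a $K^{(1)}$-orbit in $P$; choose a representative $\eta\in Y_n$ over $\zeta\in T^*\mathbb{A}^{n(1)}$, with associated $\xi$ as in \ref{Scon}. By \ref{BMRlem} and \ref{isothm}, $\mD_\eta\cong\delta^\xi$ with basis $\{\partial^I\}_{I\in\{0,\dots,p-1\}^n}$; because $T[p]$ acts trivially on $Y_n$ (here the Frobenius twist is essential), every $T[p]$-weight space of $\mD_\eta$ is a line, so restricting to $K[p]$ realizes each character of $K[p]$ with multiplicity $p^{\dim\mf h}$, whence $\mathcal E$ has fibre $(\mD_\eta)^{K[p]}$ of dimension $p^{\dim\mf h^*}$. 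On the other side, \ref{BMRlem} gives $\mD_\zeta\cong\mathrm{End}_{\boldk}(\mD_\eta)$, under which the $K[p]$-invariant differential operators map isomorphically (a rank count, $\mD^{K[p]}$ being the $K[p]$-trivial summand of $\mD$ over $Z(\mD)$, of rank $p^{2n-\dim\mf k}$) onto the grading-compatible endomorphisms $(\mathrm{End}_{\boldk}(\mD_\eta))^{K[p]}$; the fibre of $\mA_\lambda$ is this algebra cut by the central relations $\Theta_j=\lambda(\Theta_j)$ for a basis $\Theta_1,\dots,\Theta_{\dim\mf k}$ of the $\mf k$-Euler operators. The crucial computation is a simultaneous form of \ref{charpoly}: each $\Theta_j$ acts semisimply on $\mD_\eta$ with the $T[p]$-weight lines as eigenlines, acting on the weight-$\bar\phi$ line by $\lambda(\Theta_j)+\langle\Theta_j,\bar\phi\rangle$ (the pairing $X_*(K[p])\times X^*(T[p])\to\mathbb F_p\subset\boldk$). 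Hence $(\mathrm{End}_{\boldk}(\mD_\eta))^{K[p]}$ splits as a product of matrix algebras indexed by the characters of $K[p]$, and imposing $\Theta_j=\lambda(\Theta_j)$ projects onto the factor attached to the trivial character, namely $\mathrm{End}_{\boldk}((\mD_\eta)^{K[p]})$, so the action map is an isomorphism of matrix algebras; the Azumaya property of $\mA_\lambda$ then follows from the recalled criterion. I expect the main obstacle to be precisely this eigenvalue computation -- the multi-operator generalization of \ref{charpoly} -- since it is what forces the common $\lambda$-eigenspace of the Euler operators to be exactly $(\mD_\eta)^{K[p]}$ rather than some larger, reducible subspace; the auxiliary bookkeeping (compatibility of $i^*_\lambda$, of descent along the free $K^{(1)}$-actions, and of $(-)^K$) is routine given \ref{invarlem}--\ref{invarcor} but must also be written out.
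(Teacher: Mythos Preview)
Your proposal is correct and follows the same overall architecture as the paper: both reduce, via the descent dictionary of \ref{invarlem}--\ref{invarcor}, to proving the fibrewise isomorphism $\mD_\nu^{K[p]}\cong\mathrm{End}_{\boldk}(\mD_\eta^{K[p]})$ at a point $\eta\in Y_n$ over $\nu\in Y_{\mathfrak k}$ over $\zeta\in T^*\mathbb{A}^{n(1)}$.

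The genuine difference is in how that fibrewise isomorphism is established. The paper proves a separate dimension lemma (the one immediately following \ref{mth2}), using equivariant projectivity to compute $\dim\mD_\zeta^{K[p]}$, $\dim\mD_\nu^{K[p]}$, $\dim\mD_\eta^{K[p]}$, and then argues by surjectivity of $\mD_\zeta^{K[p]}\to\mathrm{End}(\mD_\eta^{K[p]})$ together with a dimension count to identify the kernel with the image of the moment-map ideal. Your route is more structural: the commutator identity $[E_k,f]=\phi_k f$ for $f$ of $T$-weight $\phi$, combined with the right relation $E_k\equiv c_k$ in $\mD_\eta$, shows immediately that the left $E_k$-action on the $T[p]$-weight-$\bar\phi$ line is the scalar $c_k+\bar\phi_k$; since $\Theta_j\in\mathfrak k$ pairs with $\bar\phi$ only through $\bar\phi|_{K[p]}$, the relations $\Theta_j=\lambda(\Theta_j)$ kill exactly the $K[p]$-nontrivial blocks of $(\mathrm{End}\,\mD_\eta)^{K[p]}=\prod_\chi\mathrm{End}(\mD_\eta^\chi)$. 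This bypasses the paper's dimension lemma entirely and explains \emph{why} the answer is the matrix algebra on the invariants, at the cost of having to observe that $\Theta_j$ is central in $\mD^{K[p]}$ (which you use implicitly when calling these ``central relations''). Your worry that this eigenvalue computation is the ``main obstacle'' is misplaced: it is actually simpler than the single-variable \ref{charpoly}, since the commutator formula does all the work without any matrix manipulation. One small point to make explicit in a write-up is that the pullback of $\mA_\lambda$ to $\mmM_n$ is locally free (needed for \ref{nakpro}); this follows from $\mD^{K[p]}$ being a direct summand of the free $\mO_{Y_{\mathfrak k}}$-module $\mD$, but you should say so.
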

\begin{proof}
One description (see \cite{Milne}) of a sheaf of Azumaya algebras on an algebraic variety is a sheaf of central algebras which is a matrix algebra at every (closed) point. To ease notation, write $\mA$ instead of $\mA_{\lambda}$.\\

Let $\pi$ always denote the appropriate GIT quotient map.  Let $\zeta \in \mu_{Y_n}^{-1}(\lambda)^{\alpha-ss},  \nu \in Y_{\mathfrak{k}}$ any point above $\zeta$ and $\eta \in Y_n$ any point above $\nu$.  Further suppose that $\zeta$ has a closed $K^{(1)}$-orbit in $\mu_{Y_n}^{-1}(\lambda)^{\alpha-ss}$.  For any point $p$, let $O(p)$ stand for the $K^{(1)}$ orbit.\\

It suffices to show that the natural left action yields an isomorphism of algebras,
\begin{eqnarray}
\mA_{\pi(\nu)} &\cong& End_{\boldk}(\mA_{\pi(\eta)})
\end{eqnarray}
for all choices of $\zeta$, $\nu$, and $\eta$.\\

$K$ being linearly reductive implies that $\mA_{\pi(\nu)} \cong \mD^K|_{\pi(\nu)} \cong (\mD|_{O(\nu)})^K$ and $\mA_{\pi(\eta)} \cong  (\mD|_{O(\eta)})^K$.\\

Hence, it suffices to show that $(\mD|_{O(\nu)})^K \cong End_{\boldk}((\mD|_{O(\eta)})^K)$.\\

From \ref{invarlem}, it follows that to give a $K$-equivariant sheaf on $O(\nu) \cong K^{(1)}$ is to give a $K[p]$-equivariant sheaf on a point.  It was shown this functor assigns $\mD_{\nu}$ to $\mD|_{O(\nu)}$ and $\mD_{\eta}$ to $\mD|_{O(\eta)}$ and in \ref{invarcor} it was checked that taking $K[p]$ invariants of the former is the same as taking the $K$-invariants of the latter.\\

This reduces $(1)$ to showing the natural map is an isomorphism,
\begin{eqnarray}
\mD_{\nu}^{K[p]} &\cong& End_{\boldk}(\mD_{\eta}^{K[p]}) \label{eqn: 2}
\end{eqnarray}
Appealing to the next lemma, the map generating the ideal defined by the fiber of $\lambda \in \mathfrak{k}^*$, $\Phi: \oplus_{i=1}^k \mD^{K[p]}_{\zeta} \rightarrow \mD^{K[p]}_{\zeta}$ has cokernel of dimension $p^{2dim(\mathfrak{h}^*)}$ as it is isomorphic to $\mD_{\nu}^{K[p]}$.\\

It is clear that the image of this map acts by $0$ on the module $\mD^{K[p]}_{\eta}$ and by the next lemma the module $\mD^{K[p]}_{\eta}$ has rank $p^{dim(\mathfrak{h}^*)}$. By the $\mathit{K[p]}$\textit{-equivariant} isomorphism from $\ref{loccor}$, the natural map $D_{\zeta}^{K[p]} \twoheadrightarrow End_{\boldk}(\mD_{\eta}^{K[p]})$ is surjective.  By dimension considerations, its kernel is the image of $\Phi$.  Now tensor with $k(\nu)$ and notice that $\Phi \otimes k(\nu) =0$.  By right exactness of tensor products equation $(2)$ holds.

The fact about the \'etale splitting is also precisely described by $(2)$.
\end{proof}
\begin{lem} In the notation of the above lemma,
$$dim_{\boldk}(\mD_{\zeta}^{K[p]}) = p^{n+dim{\mathfrak{h}^*}} \hspace{15pt} dim_{\boldk}(\mD_{\nu}^{K[p]}) = p^{2 dim{\mathfrak{h}^*}} \hspace{15pt} dim_{\boldk}(\mD_{\eta}^{K[p]}) = p^{dim\mathfrak{h}^*}$$
\end{lem}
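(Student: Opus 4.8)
The plan is to determine the $T[p]$-module structure of $\mD_\eta$ once and for all and then to deduce the three equalities by restricting along $K[p]\subseteq T[p]$. All three of $\mD_\eta,\mD_\nu,\mD_\zeta$ are $T[p]$-modules: the ring $S=Z(\mD)[\{x_i\partial_i\}]$ is $T$-invariant and $Z(\mD)=\boldk[\{x_i^p,\partial_i^p\}]$ is $T[p]$-fixed, so $\eta$ and $\zeta$ are $T[p]$-fixed and $\mD_\eta=\mD\otimes_S k(\eta)$, $\mD_\zeta=\mD\otimes_{Z(\mD)}k(\zeta)$ acquire $T[p]$-actions, with $\mD_\zeta\cong End_{\boldk}(\mD_\eta)$ a $T[p]$-equivariant isomorphism by \ref{loccor} (here $\zeta$ lies under $\eta$); and $\mD_\nu$, which the proof of \ref{mth2} presents as $\mD_\zeta\big/\sum_{i=1}^k\mD_\zeta(E^{\mathfrak k}_i-\lambda_i)$ with $E^{\mathfrak k}_1,\dots,E^{\mathfrak k}_k$ the images in $\mD$ of an integral basis of $\mathfrak k\subseteq\mathfrak t$ and $\lambda_i=\lambda(E^{\mathfrak k}_i)$, is a $T[p]$-module quotient of $\mD_\zeta$ (each $E^{\mathfrak k}_i$, being an Euler operator, has $T[p]$-weight zero). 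Since $K[p]$ is diagonalizable, taking $K[p]$-invariants is exact and merely extracts the trivial-weight part of the $X^*(K[p])$-grading, so everything reduces to a weight count.

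The heart of the matter is the claim that $\mD_\eta$ is the regular representation of $T[p]=\mu_p^n$, i.e.\ each of its $p^n$ characters occurs exactly once. I would prove this using the tensor factorizations $\mD=\bigotimes_{j=1}^n\boldk\langle x_j,\partial_j\rangle$ and $S=\bigotimes_{j=1}^n Z(\boldk\langle x_j,\partial_j\rangle)[x_j\partial_j]$, which realize $Y_n$ as a product and give a $T[p]=\prod_j\mu_p$-equivariant isomorphism $\mD_\eta\cong\bigotimes_{j=1}^n(\mD_j)_{\eta_j}$ with the $j$-th factor $\mu_p$ acting only on the $j$-th tensor slot; this reduces the claim to $n=1$. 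There $(\mD_1)_{\eta_1}$ is $p$-dimensional by \ref{isothm}, is cyclic over $\boldk\langle x,\partial\rangle$ on the image $v_0$ of $1$, and $v_0$ is an eigenvector of $E=x\partial$ with eigenvalue $c=E(\eta_1)$; by \ref{charpoly} the operator $E$ on this $p$-dimensional space has minimal polynomial $\lambda^p-\lambda-a^p\omega(\partial)^p$ with $p$ distinct roots, so the eigenline through $v_0$ is a single $\mu_p$-weight line, of some weight $w_0$. Since $(\mD_1)_{\eta_1}=\boldk\langle x,\partial\rangle v_0$ is spanned by the $x^a v_0$ and $\partial^b v_0$, every weight that occurs lies in $\{w_0+m\chi:m\in\mathbb Z/p\}$, $\chi$ the generator; and each such weight is attained, for if $x^m v_0=0$ and $\partial^{p-m}v_0=0$ then applying the Euler-operator factorial identities $\partial^m x^m=\prod_{l=1}^m(E+l)$ and $x^{p-m}\partial^{p-m}=\prod_{l=0}^{p-m-1}(E-l)$ to $v_0$ would force $c\in\{-1,\dots,-m\}$ and $c\in\{0,\dots,p-m-1\}$ simultaneously, which is impossible since those two subsets partition $\mathbb F_p$. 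Hence all $p$ weights occur and, by the dimension count, each occurs once.

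Granting the claim, write $\mD_\eta=\bigoplus_{\chi\in X^*(T[p])}L_\chi$ with each $L_\chi$ a line, and let $r\colon X^*(T[p])\twoheadrightarrow X^*(K[p])$ be restriction. From the exact sequence $1\to K[p]\to T[p]\to H[p]\to 1$ (exact because multiplication by $p$ is surjective on the torus $K$) one gets $|\ker r|=|H[p]|=p^{\dim H}=p^{\dim\mathfrak h^*}$, hence $|X^*(K[p])|=p^{\,n-\dim\mathfrak h^*}$. Then $\mD_\eta^{K[p]}=\bigoplus_{r(\chi)=0}L_\chi$ has dimension $|\ker r|=p^{\dim\mathfrak h^*}$. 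Next, $End_{\boldk}(\mD_\eta)=\bigoplus_{\chi,\chi'}Hom_{\boldk}(L_\chi,L_{\chi'})$ with the $(\chi,\chi')$-summand of $K[p]$-weight $r(\chi')-r(\chi)$, so $\mD_\zeta^{K[p]}\cong\bigoplus_{r(\chi)=r(\chi')}Hom_{\boldk}(L_\chi,L_{\chi'})$ has dimension $\sum_\psi|r^{-1}(\psi)|^2=p^{\,n-\dim\mathfrak h^*}\cdot p^{\,2\dim\mathfrak h^*}=p^{\,n+\dim\mathfrak h^*}$. Finally each $E^{\mathfrak k}_i$ has $T[p]$-weight $0$ and so acts by a scalar on each $L_\chi$; setting $\Xi=\{\chi:\text{every }E^{\mathfrak k}_i\text{ acts by }\lambda_i\text{ on }L_\chi\}$ and writing $\mD_\zeta=\mD_\eta\otimes_{\boldk}\mD_\eta^{*}$, the simultaneous diagonalization of the $E^{\mathfrak k}_i$ gives $\mD_\nu\cong\mD_\eta\otimes_{\boldk}\big(\bigoplus_{\chi\in\Xi}L_\chi\big)^{*}$. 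Now $0\in\Xi$, since $v_0=\bar 1\in L_0$ is killed by each $E^{\mathfrak k}_i-\lambda_i$ (the value of $E^{\mathfrak k}_i$ at $\eta$ being $\lambda_i$), and passing on each tensor factor to the $E_j$-eigenvalue coordinate — a bijection of the $\mu_p$-weights with $\mathbb F_p$ by the $n=1$ analysis — turns the defining conditions of $\Xi$ into a consistent rank-$k$ linear system in $n$ variables over $\mathbb F_p$, so $|\Xi|=p^{\,n-k}=p^{\dim\mathfrak h^*}$. Hence $\mD_\nu^{K[p]}=\bigoplus_{\chi\in\Xi}\bigoplus_{r(\chi')=r(\chi)}Hom_{\boldk}(L_\chi,L_{\chi'})$ has dimension $|\Xi|\cdot|\ker r|=p^{\,2\dim\mathfrak h^*}$, and all three equalities follow.

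The only genuinely substantive step is the claim of the second paragraph, and within it the assertion that \emph{every} character of $T[p]$ appears in $\mD_\eta$ (the reverse bound is immediate from the dimension count once the weights are known to lie in one $\mathbb Z/p$-coset per tensor slot). The whole argument pivots on \ref{charpoly} — which furnishes the simple spectrum of the Euler operator and hence the weight line through $v_0$ — together with the two factorial identities for $E$ and the elementary observation that $\{-1,\dots,-m\}$ and $\{0,\dots,p-m-1\}$ are complementary in $\mathbb F_p$; once $\mD_\eta$ is identified with the regular representation, the remaining assertions are bookkeeping with the exact sequence $1\to K[p]\to T[p]\to H[p]\to 1$.
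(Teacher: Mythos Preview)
Your proof is correct and follows a genuinely different route from the paper's. The paper handles $\mD_\zeta^{K[p]}$ by giving $\mD_\zeta$ its $T[p]\times T[p]$-grading (from the monomial basis $\{x^I\partial^J\}$, each weight space one-dimensional) and simply counting preimages under $X^*(T[p]\times T[p])\to X^*(K[p])$. For the remaining two equalities the paper does \emph{not} analyze weight spaces of $\mD_\eta$ at all; instead it invokes equivariant projectivity of $\mD_\zeta$ as an $S_\zeta$-module to embed it $K[p]$-equivariantly as a summand of an equivariantly free module, obtains linear relations among the dimensions of weight components at $\zeta,\nu,\eta$, and back-substitutes using the first equality.

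Your approach is more structural: you identify $\mD_\eta$ once and for all as the \emph{regular} $T[p]$-representation (via the tensor reduction to $n=1$ and the factorial identities for $E$), then read off all three dimensions from $\mD_\zeta\cong\mathrm{End}(\mD_\eta)$ and the explicit quotient $\mD_\nu\cong \mD_\eta\otimes\bigl(\bigoplus_{\chi\in\Xi}L_\chi\bigr)^{*}$. This is more hands-on but more transparent: it actually pins down the full $T[p]$-module structure rather than only a dimension count, and it makes visible the pleasant coincidence $\Xi=\ker r$ (your linear system $\sum_j a_{ij}\chi_j=0$ is exactly the condition $r(\chi)=0$), which explains cleanly why $|\Xi|=p^{\dim\mathfrak h^*}$. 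The one place your argument leans on an unstated fact is that the integer $k\times n$ matrix $(a_{ij})$ retains rank $k$ modulo $p$; this holds because $X_*(K)\hookrightarrow X_*(T)$ is a saturated sublattice (as $H=T/K$ is a torus), so the matrix extends to an element of $GL_n(\mathbb Z)$. With that noted, your argument is complete.
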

\begin{proof}
Equip $\mD$ with its natural $T \times T$-equivariant structure as a module over $\mO_{{T^*\mathbb{A}^n}^{(1)}}$.
For $\tau \in X^*(T[p] \times T[p])$, $\mD_{\zeta}^{\tau}$ is precisely of dimension $1$.  For $\chi \in X^*(K[p])$, $\mD_{\zeta} = \oplus_{\tau \text{ over } \chi} \mD^{\tau}_{\zeta}$.  Thus, $dim_{\boldk}(\mD^{\chi}_{\zeta}) = p^{2n-dimK}=p^{n+dim{\mathfrak{h}^*}}$.  The first equality follows from the case when $\chi$ is trivial.\\

As $\mD_{\zeta}$ is free of rank $p^n$ over $S_{\zeta}$, $\mD_{\nu}$ is of rank $p^n$ over $S_{\nu}$.  Thus $dim_{\boldk}(\mD_{\nu}) = p^{n+dim\mathfrak{h}^*}$.  $\mD_{\zeta}$ is a projective $S_{\zeta}$-module since it is free.  It also has a $K[p]$-equivariant structure.  It is well-known that an equivariant module is equivariantly projective if and only if its underlying module is also projective. Thus, there exists $K[p]$-equivariant free $S_{\zeta}$ modules $\mF_1$ and $\mF_2$ such that $\mD_{\zeta} \oplus \mF_1 \cong \mF_2$ as equivariant $S_{\zeta}$-modules.  Let $r_1^{\chi}$ and $r_2^{\chi}$ denote the $S_{\zeta}$ rank of the $\chi$ weight space of $\mF_1$ and $\mF_2$ respectively.  Taking vector space dimensions of weight spaces, $p^{n+dim\mathfrak{h}^*}+r_1^{\chi}p^n = r_2^{\chi}p^n$.  Reducing this isomorphism to $\nu$ and $\eta$, one obtains the formulas $dim(\mD_{\nu}^{K[p]}) + r_1^{0}p^{dim\mathfrak{h}^*} = r_2^0p^{dim\mathfrak{h}^*}$ and $dim(\mD_{\eta}^{K[p]}) + r_1^{0} = r_2^0$.  Back substitution yields the latter two equalities.
\end{proof}

%\begin{proof}(Old proof)
%Considered as a map $\mO_{Y_{\mathfrak{k}}}$ modules, the map $\Phi$ is an isomorphism for all $\nu' \neq \nu$ and is zero at $\nu$.  Thus $Cokern(\Phi) \cong \mD^{K[p]}_\nu$.  $\mD$ has not only a $T$ equivariant structure, but a $T \times T$-equivariant structure where the usual structure is obtained by restriction to the anti-diagonal.  With this structure it is clear that left multiplication by $x^I\partial^J$ is an isomorphism between $\mD^{T \times T}$ and the eigenspace of weight $(I,J)$. Translating these facts back to equivariant sheaves on a point, there is an isomorphism $\mD^{T[p] \times T[p]}_{\zeta} \cong \mD_{\zeta}^{(I,J)}$ (for $0 \leq I,J \leq p-1$ .Let $\rho: X^*(T[p] \times T[p]) \rightarrow X^*(K[p])$ be the natural map and  $\chi$ be a character of $K[p]$, there is an isomorphism $\mD^{\chi}_{\zeta} = \oplus_{\tau \in \rho^{-1}(\chi)} \mD^{\tau}_{\zeta} \cong \oplus_{\tau' \in \rho^{-1}(0)} \mD^{\tau'}_{\zeta} = \mD^K_{\zeta}$.  .  All eigenspaces for the $K[p]$-action on $\mD_{\nu}$ have the same dimension $d$ and there are $p^{dim(K)}$ of these spaces.  As $\mD_{\zeta} \cong (\mO_{Y_n}^{\oplus p^n})_{\zeta}$ (over $\mO_{Y_n}$), $\mD_{\nu}$ has vector space dimension $p^{n+dim(H)}$.  Thus $d=p^{n+dim(H)-dim(K)}=p^{dim(K)+dim(H)+dim(H)-dim(K)}=p^{2dim(H)}$.
%\end{proof}

\begin{cor} For $\alpha$ such that $(K,\alpha,\lambda)$ is smooth for all $\lambda$, the sheaf $\widetilde{\mA}$ is an Azumaya algebra on $\widetilde{\mmM} \times_{\mathfrak{k}^{*(1)}} \mathfrak{k}$ with \'etale trivialization $\widetilde{\mmM} \times_{\mathfrak{t}^{*(1)}} \mathfrak{t}$.
\end{cor}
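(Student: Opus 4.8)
The plan is to read this off Theorem~\ref{mth2} as its ``family over $\mathfrak{k}^*$'' version: one replays the proof of that theorem with the fixed moment fibre $\mu^{-1}(\lambda)$ replaced by the whole $\alpha$-semistable locus, the hypothesis that $(K,\alpha,\lambda)$ be smooth for \emph{every} $\lambda$ being precisely what makes the replacement legitimate. First I would pin down the spaces. Since the $K$-orbit of any point of $T^*\mathbb{A}^n$ stays inside its (closed) moment fibre, Hilbert--Mumford shows $\alpha$-semistability in $T^*\mathbb{A}^n$ agrees with $\alpha$-semistability in the moment fibre; so the hypothesis makes $K$, hence $K^{(1)}$, act freely on the entire $\alpha$-semistable (resp. $\alpha^p$-semistable) locus of $T^*\mathbb{A}^n$ (resp. ${T^*\mathbb{A}^n}^{(1)}$), and $\widetilde{\mmM}=T^*\mathbb{A}^n\git_\alpha K$ and $\widetilde{\mmM}^{(1)}$ are smooth geometric quotients. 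As $\mu^{\mathfrak{k}}$ and $\mu^{\mathfrak{t}}$ are $K$-invariant they descend to $\widetilde{\mmM}^{(1)}$, and base change along the finite étale Artin--Schreier maps commutes with $\git K^{(1)}$; reading the fibre products with the Frobenius-twist conventions used throughout, this identifies $\widetilde{\mmM}\times_{\mathfrak{k}^{*(1)}}\mathfrak{k}$ with $Y_{\mathfrak{k}}^{\alpha^p-ss}\git K^{(1)}$, the space on which $\widetilde{\mA}=\mD|_{\alpha-ss}^K$ lives, and $\widetilde{\mmM}\times_{\mathfrak{t}^{*(1)}}\mathfrak{t}$ with $Y_n^{\alpha^p-ss}\git K^{(1)}$, in the notation of Theorem~\ref{mth2}. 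The morphism $Y_n\to Y_{\mathfrak{k}}$ is the base change of $AS\colon\mathfrak{h}^*\to\mathfrak{h}^{*(1)}$ along $Y_{\mathfrak{k}}\to\mathfrak{h}^{*(1)}$, hence finite étale of degree $p^{\dim\mathfrak{h}^*}$, and it is $K^{(1)}$-equivariant, so its GIT quotient $Y_n^{\alpha^p-ss}\git K^{(1)}\to Y_{\mathfrak{k}}^{\alpha^p-ss}\git K^{(1)}$ is again finite étale of that degree.

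Next I would reduce to a pointwise statement exactly as in Theorem~\ref{mth2}. Being an Azumaya algebra is a condition at closed points, and a trivialization is detected by checking at closed points that the natural action map is an isomorphism (Proposition~\ref{nakpro}); local freeness of $\widetilde{\mA}$ over the smooth base is automatic since, the $K^{(1)}$-action being free, every fibre is a $\mD_\nu^{K[p]}$, of constant dimension $p^{2\dim\mathfrak{h}^*}$ by the dimension lemma following Theorem~\ref{mth2}. As $K$ is linearly reductive and the free action makes every $K^{(1)}$-orbit closed, Lemma~\ref{invarlem} and Corollary~\ref{invarcor} identify the fibre of $\widetilde{\mA}$ over $\pi(\nu)$ with $\mD_\nu^{K[p]}$ and the fibre of its pullback over $\pi(\eta)$ with $\mD_\eta^{K[p]}$, for $\nu$ any point of the $\alpha^p$-semistable locus of $Y_{\mathfrak{k}}$ and $\eta\in Y_n$ over $\nu$. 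Thus the corollary reduces to showing that the natural left action induces
$$\mD_\nu^{K[p]}\cong End_{\boldk}\bigl(\mD_\eta^{K[p]}\bigr),$$
the only change from Theorem~\ref{mth2} being that $\nu$ now ranges over the whole semistable locus rather than over one moment fibre.

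Finally, this isomorphism is produced word for word as in Theorem~\ref{mth2}. Writing $\zeta$ for the image of $\nu$ in ${T^*\mathbb{A}^n}^{(1)}$, the $K[p]$-equivariant isomorphism $\mD_\zeta\cong End_{\boldk}(\mD_\eta)$ of Corollary~\ref{loccor} yields, on taking $K[p]$-invariants, a surjection of algebras $\mD_\zeta^{K[p]}\twoheadrightarrow End_{\boldk}(\mD_\eta^{K[p]})$ (projection onto the weight-zero block). The generator map $\Phi\colon\bigoplus_i\mD_\zeta^{K[p]}\to\mD_\zeta^{K[p]}$ attached to the fibre of $\mu_Y$ through $\nu$ has image contained in the kernel of this surjection and has cokernel $\mD_\nu^{K[p]}$, and the dimension lemma forces its image to equal that kernel; hence $\mD_\nu^{K[p]}$, being the cokernel of $\Phi$, is isomorphic as an algebra to $End_{\boldk}(\mD_\eta^{K[p]})$. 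Assembling these isomorphisms over the smooth base via Proposition~\ref{nakpro} yields both the Azumaya property and the asserted étale trivialization, with splitting bundle the $K^{(1)}$-descent of the $\mathcal{O}_{Y_n}$-module $\mD^{K[p]}$ restricted to $Y_n^{\alpha^p-ss}$. I expect no new difficulty in the algebra; the main obstacle is the bookkeeping of the first step — chiefly checking that smoothness for every $\lambda$ genuinely upgrades ``smooth moment fibres'' to ``$\widetilde{\mmM}$ smooth with free $K^{(1)}$-action'', so that the descent of Lemma~\ref{invarlem} applies uniformly and the pointwise isomorphisms glue to isomorphisms of sheaves.
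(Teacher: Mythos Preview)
Your proposal is correct and matches the paper's intended argument: the paper gives no proof for this corollary at all, treating it as an immediate consequence of Theorem~\ref{mth2} applied fibrewise over $\mathfrak{k}^*$, and your write-up correctly unpacks what that entails. The only thing to note is that you have supplied considerably more detail than the paper does---in particular the bookkeeping about identifying the base spaces via Artin--Schreier base change and the verification that the smoothness hypothesis for all $\lambda$ yields a free $K^{(1)}$-action on the whole semistable locus---but this is all in the spirit of the omitted argument, and the pointwise core (reducing to the isomorphism $\mD_\nu^{K[p]}\cong End_{\boldk}(\mD_\eta^{K[p]})$ and invoking the dimension lemma) is exactly the content of the proof of Theorem~\ref{mth2}.
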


\begin{lem}\label{htorcar} The map $\mmM^{(1)} \leftarrow \mmM_Y$ fits into two Cartesian diagrams,
$$\xymatrix{ \mmM^{(1)} \ar@{->}[d] & \mmM_Y \ar@{->}[l] \ar[d]^{\mu_Y^{\mathfrak{h}}} \\
		\mathfrak{h}^{*(1)} & \mathfrak{h}^* \ar[l]^{AS} } \hspace{25pt}
\xymatrix{\mmM^{(1)} \ar@{->}[d] \ar@{<-}[r] & \mmM_Y \ar@{->}[d] \\ 
            \mmM^{(1)}_0 \ar@{<-}[r] & \mmM_{Y,0} }$$
\end{lem}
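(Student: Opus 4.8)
The plan is to deduce both squares from a single identity of graded rings obtained by carrying the Cartesian description of $Y_n$ from Lemma \ref{cart} through the GIT quotient. Recall that $\mmM_Y = \mu_{Y_n}^{-1}(\lambda) \git_{\alpha} K^{(1)}$ (the cover of Theorem \ref{mth2}) and that $\mmM_{Y,0}$ is its affine analogue $\mu_{Y_n}^{-1}(\lambda)/K^{(1)}$. First I would use that $1 \to K \to T \to H \to 1$ splits (the surjection $X^{*}(T) \twoheadrightarrow X^{*}(K)$ of free abelian groups splits), so $\mathfrak{t} = \mathfrak{k} \oplus \mathfrak{h}$ as restricted Lie algebras; hence under $\mathfrak{t}^{*(1)} = \mathfrak{k}^{*(1)} \times \mathfrak{h}^{*(1)}$ the map $AS$ becomes $AS_{\mathfrak{k}} \times AS_{\mathfrak{h}}$ and $\mu \colon {T^*\mathbb{A}^n}^{(1)} \to \mathfrak{t}^{*(1)}$ becomes $(\mu^{\mathfrak{k}}, \mu^{\mathfrak{h}})$. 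From Lemma \ref{cart}, $Y_n = {T^*\mathbb{A}^n}^{(1)} \times_{\mathfrak{t}^{*(1)}} \mathfrak{t}^{*}$; pasting fibre products (via $A \times_{B_1 \times B_2}(C_1 \times C_2) = (A \times_{B_1} C_1) \times_{B_2} C_2$) gives $Y_n \cong Y_{\mathfrak{k}} \times_{\mathfrak{h}^{*(1)}} \mathfrak{h}^{*}$, where $Y_{\mathfrak{k}} \to \mathfrak{h}^{*(1)}$ is $\mu^{\mathfrak{h}}$ precomposed with $Y_{\mathfrak{k}} \to {T^*\mathbb{A}^n}^{(1)}$ and the other map is $AS_{\mathfrak{h}}$. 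Base changing along $\lambda \in \mathfrak{k}^{*}$ (which factors through $Y_{\mathfrak{k}}$) then yields the affine scheme $\mu_{Y_n}^{-1}(\lambda) \cong \mu_Y^{-1}(\lambda) \times_{\mathfrak{h}^{*(1)}} \mathfrak{h}^{*}$.

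Now I would take the $K^{(1)}$-quotient. Since $H = T/K$ is abelian, $K^{(1)}$ acts trivially on $\mathfrak{h}^{*}$ and $\mathfrak{h}^{*(1)}$. Put $A = \mathcal{O}(\mu_Y^{-1}(\lambda))$, $C = \mathcal{O}(\mathfrak{h}^{*(1)})$, $B = \mathcal{O}(\mathfrak{h}^{*})$, so $\mathcal{O}(\mu_{Y_n}^{-1}(\lambda)) = A \otimes_C B$ with $K^{(1)}$ acting through $A$ only, the map $C \to A$ landing in $A^{K^{(1)}}$, and $C \to B$ equal to $AS_{\mathfrak{h}}^{\sharp}$. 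Each $\alpha$-weight space therefore splits off the base change, $(A \otimes_C B)_{\alpha^{m}} = A_{\alpha^{m}} \otimes_C B$, so with $R = \bigoplus_{m \ge 0} A_{\alpha^{m}}$ — for which $\mmM^{(1)} \cong \mathrm{Proj}(R)$ (the Cartesian identification used in defining $\mA_{\lambda}$), $\mmM_0^{(1)} = \mathrm{Spec}(R_0) = \mathrm{Spec}(A^{K^{(1)}})$, and $v^{(1)} \colon \mathrm{Proj}(R) \to \mathrm{Spec}(R_0)$ the resolution — the quotient of the cover is computed by $R \otimes_C B$. Because $\mathrm{Proj}$ commutes with base change on the coefficient ring, this yields both squares at once, according to which identification one reads: as base change of $\mathrm{Proj}(R)$ along $\mathrm{Spec}(B) \to \mathrm{Spec}(C)$ it gives $\mmM_Y \cong \mmM^{(1)} \times_{\mathfrak{h}^{*(1)}} \mathfrak{h}^{*}$ (after matching $\mathrm{Proj}(R) \to \mathrm{Spec}(C)$ with the descent of $\mu^{\mathfrak{h}}$, which is the twisted residual $H$-moment map of $\mmM^{(1)}$, and $\mathrm{Spec}(B) \to \mathrm{Spec}(C)$ with $AS$), the first square; and, using $R \otimes_C B = R \otimes_{R_0}(R_0 \otimes_C B)$, as base change of $\mathrm{Proj}(R)$ along $\mathrm{Spec}(R_0 \otimes_C B) = \mmM_{Y,0} \to \mathrm{Spec}(R_0) = \mmM_0^{(1)}$ it gives $\mmM_Y \cong \mmM^{(1)} \times_{\mmM_0^{(1)}} \mmM_{Y,0}$, the second.

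Everything above is formal modulo bookkeeping, and I expect that bookkeeping to be where essentially all of the real work lies. There are two points. First, semistability: the linearisation on $\mu_{Y_n}^{-1}(\lambda)$ is pulled back along a $K^{(1)}$-equivariant affine morphism whose target has trivial $K^{(1)}$-action, so its semistable locus is exactly the preimage of the one downstairs — a point that never surfaces if, as above, one builds the quotients as $\mathrm{Proj}$ of the rings of weight sections, which is why I would phrase the whole argument through graded rings. Second, and more delicate, one must keep the Frobenius twists straight and verify that the structure maps $\mmM^{(1)} \to \mathfrak{h}^{*(1)}$ and $\mmM_0^{(1)} \to \mathfrak{h}^{*(1)}$ coming out of $\mathrm{Proj}$ and $\mathrm{Spec}$ of the $C$-algebras really are the residual moment maps of $\mmM$ and $\mmM_0$, and that the former factors through $v^{(1)}$ and the latter; this is precisely the content of the moment-map conventions of Section 5, and is the only place I anticipate genuine care being required, though no real obstacle.
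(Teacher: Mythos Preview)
Your proposal is correct and follows essentially the same route as the paper: both arguments identify the GIT quotients with $\mathrm{Proj}$ (resp.\ $\mathrm{Spec}$) of the graded ring of $\alpha$-weight sections, observe that passing from $\mu_{Y_{\mathfrak{k}}}^{-1}(\lambda)$ to $\mu_{Y_n}^{-1}(\lambda)$ simply adjoins $T$-invariant Artin--Schreier generators, and conclude that each weight space factors as a tensor product, so the cover is a base change. Your version is more explicit about the torus splitting $\mathfrak{t}=\mathfrak{k}\oplus\mathfrak{h}$ and the fibre-product algebra, whereas the paper just writes $\mO_{\mu^{-1}_{Y_n}(\lambda)}^{\alpha^m}=\mO_{\mu^{-1}_{Y_{\mathfrak{k}}}(\lambda)}^{\alpha^m}[S_1,\dots,S_n]$ and reads off both squares directly; the content is the same.
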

\begin{proof}
$Y_n$ is obtained from $T^*\mathbb{A}^n$ by solving Artin-Schreier equations with $T$-invariant coefficients.  Observe that $\mO_{\mu^{-1}_{Y_n}(\lambda)}^{\alpha^m} = \mO_{\mu^{-1}_{Y_{\mathfrak{k}}}(\lambda)}^{\alpha^m}[S_1,..,S_n]$ where $S_i$ denote the image of the generators from \ref{netlem}.

The geometric invariant theory constructions show that,
$$\begin{array}{ll}
 \mmM = Proj(\oplus_{m \geq 0} \mO_{\mu^{-1}_{Y_{\mathfrak{k}}}(\lambda)}^{\alpha^m}) & \mmM_Y = Proj(\oplus_{m \geq 0} \mO_{\mu^{-1}_{Y_{\mathfrak{k}}}(\lambda)}^{\alpha^m}[S_1,..,S_n]) \\
 \mmM_0 = \mO_{\mu^{-1}_{Y_{\mathfrak{k}}(\lambda)}}^{K} & \mmM_{Y,0} = \mO_{\mu^{-1}_{Y_{\mathfrak{k}}}(\lambda)}^{K}[S_1,..,S_n]\\
\end{array}$$
From this description, $\mmM_Y$ is simply obtained from $\mmM^{(1)}$ by either base change.

\end{proof}
\begin{thm} The Azumaya algebra $\mA_{\lambda}$ is trivial on the fibers of $\pi$ and $\mu^{\mathfrak{h}}$.
\end{thm}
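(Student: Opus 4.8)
The plan is to mimic the proof of Theorem~\ref{smainthm} exactly, now that all the hypertoric analogues of the ingredients are in place. The strategy is to exploit the two Cartesian squares from Lemma~\ref{htorcar} together with the base-change/change-of-rings trick: if an Azumaya algebra becomes trivial after a flat base change that factors the specialization to a fiber, then it is already trivial on that fiber.

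First I would handle the fibers of $\mu^{\mathfrak{h}}: \mmM^{(1)} \to \mathfrak{h}^{*(1)}$. Fix $h \in \mathfrak{h}^{*(1)}$ and choose $h' \in \mathfrak{h}^*$ with $AS(h') = h$; this is possible since $AS$ is surjective (it is the Artin--Schreier-type cover appearing in Lemma~\ref{netlem}, hence finite surjective). Using the left Cartesian square of Lemma~\ref{htorcar}, $\mmM_Y \cong \mmM^{(1)} \times_{\mathfrak{h}^{*(1)}} \mathfrak{h}^*$, so pulling $\mA_\lambda$ back to $\mmM_Y$ and then restricting to the point $h' \in \mathfrak{h}^*$ agrees with restricting $\mA_\lambda$ to the fiber $(\mu^{\mathfrak{h}})^{-1}(h) \subset \mmM^{(1)}$ and then base-changing along the isomorphism $k(h) \cong k(h')$. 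Symbolically,
$$\mA_\lambda \otimes_{\mO_{\mmM^{(1)}}} \mO_{\mmM_Y} \otimes_{\mO_{\mmM_Y}} k(h') \;\cong\; \mA_\lambda \otimes_{\mO_{\mmM^{(1)}}} k(h).$$
By Main Theorem~2 (Theorem~\ref{mth2}), $\pi^*\mA_\lambda$ is a trivial Azumaya algebra on $\mmM_n$, and since $\mmM_Y$ is covered by $\mmM_n$ in the statement of that theorem—more precisely $\pi^*\mA_\lambda$ is already trivialized on $\mmM_Y$ via the splitting bundle coming from $\mD$—the left-hand side above is a trivial Azumaya algebra, being the restriction of a trivial one. (If one prefers, one may pull back all the way to $\mmM_n$, where $\pi^*\mA_\lambda \cong \mEnd(\pi^*\mD^K)$, and restrict there; the fiber over any point of $\mmM_n$ lying over $h'$ is then manifestly a matrix algebra with an honest splitting module.) Hence the right-hand side, namely $\mA_\lambda$ restricted to the fiber of $\mu^{\mathfrak{h}}$ over $h$, is trivial.

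For the fibers of $v \equiv \pi: \mmM^{(1)} \to \mmM^{(1)}_0$ I would argue identically using the right Cartesian square of Lemma~\ref{htorcar}, which gives $\mmM_Y \cong \mmM^{(1)} \times_{\mmM_0^{(1)}} \mmM_{Y,0}$. Fix a point $m_0 \in \mmM_0^{(1)}$ and lift it to $\tilde m_0 \in \mmM_{Y,0}$ along the finite surjective Artin--Schreier map $\mmM_{Y,0} \to \mmM_0^{(1)}$; the same change-of-rings computation shows that $\mA_\lambda$ restricted to $\pi^{-1}(m_0)$, after the base change $k(m_0) \to k(\tilde m_0)$, is the restriction of the trivialized algebra $\pi^*\mA_\lambda$ on $\mmM_Y$, hence trivial. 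Since triviality of an Azumaya algebra on a variety over $\boldk$ can be checked after the faithfully flat base change along a residue-field extension—or, more simply, since a matrix algebra stays a matrix algebra after any field extension and an endomorphism-algebra structure descends along $\mmM_Y \to \mmM^{(1)}$ because the relevant square is Cartesian—this yields triviality of $\mA_\lambda$ on the fibers of $\pi$ as well.

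The only delicate point, and the step I expect to require the most care, is the descent of the splitting: knowing that the \emph{pullback} of $\mA_\lambda$ to $\mmM_Y$ (or $\mmM_n$) is of the form $\mEnd$ of a bundle does not by itself give a splitting bundle on a fiber of $\mmM^{(1)}$, because fibers of $\mu^{\mathfrak{h}}$ and $\pi$ in $\mmM^{(1)}$ need not pull back isomorphically. What saves the argument is that in each case the map $\mmM_Y \to \mmM^{(1)}$ is obtained by a \emph{Cartesian} base change along exactly the morphism ($AS$, resp.\ $\mmM_{Y,0} \to \mmM_0^{(1)}$) whose fiber we are restricting to; so the restriction of $\mA_\lambda$ to a fiber is literally pulled back, via an isomorphism on residue fields, from the restriction of $\pi^*\mA_\lambda$ to the corresponding fiber of $\mmM_Y$—and there a splitting bundle exists because $\pi^*\mA_\lambda$ is globally trivial on $\mmM_Y$. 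Thus the formal change-of-rings identity displayed above does all the work, and no further analysis of the geometry of the fibers is needed.
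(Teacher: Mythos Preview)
Your proposal is correct and is exactly the approach the paper intends: its proof is the single line ``Follows from \ref{mth2} and \ref{htorcar},'' and you have simply unpacked that reference by rerunning the change-of-rings argument of Theorem~\ref{smainthm} with the hypertoric Cartesian squares in place of Lemma~\ref{cart}. One small clarification: in the paper $\mmM_Y$ and $\mmM_n$ denote the same cover (both are obtained by adjoining all $n$ Artin--Schreier roots, cf.\ the Proj description in the proof of Lemma~\ref{htorcar}), so your hedging about ``$\mmM_Y$ is covered by $\mmM_n$'' is unnecessary---the trivialization from Theorem~\ref{mth2} lives directly on the space appearing in the Cartesian squares.
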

\begin{proof} Follows from \ref{mth2} and \ref{htorcar}.	
\end{proof}

Finally, for the convenience of the reader a table relating the new objects to the classical objects studied in \cite{BMR} follows,

$$\begin{array}{lcl}
\widetilde{\mA} & \leftrightarrow & \widetilde{\mD}\\
\widetilde{\mmM} & \leftrightarrow & \widetilde{\mathfrak{g}^*}\\
\widetilde{\mmM_0^{(1)}} & \leftrightarrow & {\mathfrak{g}^*}^{(1)} \times_{{\mathfrak{h}^*}^{(1)} \git W} \mathfrak{h}^*\\
\Gamma(\widetilde{\mA}) & \leftrightarrow & \widetilde{U(\mathfrak{g})}\\
\end{array}
\hspace{25pt}
\begin{array}{lcl}
\mA_{\lambda} & \leftrightarrow & \mD^{\lambda}\\
U_{\lambda} & \leftrightarrow & U(\mathfrak{g})^{\lambda}\\
\mmM(K,\alpha,0) & \leftrightarrow & \widetilde{\mathcal{N}}\\
\mmM_0(K,\alpha,0) & \leftrightarrow & \mathcal{N}\\
\end{array}$$

\subsection{A localization theorem}

For hypertoric varieties in characteristic $0$ , there is a version of the Beilinson-Bernstein equivalence given by Bellamy and Kuwabara \cite{BeK}.  In characteristic $p$, no statement this strong is even true.  However, combined with the work of Bezrukavnikov-Mirkovic-Rumynin and Braden-Licata-Proudfoot-Webster, there is a strong suggestion that a derived Beilinson-Bernstein equivalence will hold. The main theorem of \cite{BMR} states that there is an equivalence of triangulated categories $D^{b}(Mod_{f.g}(U^{\lambda})) \rightarrow D^{b}(Mod_{l.f.g}(\mD^{\lambda}_{G/B}))$ for regular integral weights $\lambda \in \mathfrak{h}^*$.  This equivalence is given by derived localization functor $V \mapsto V \otimes_{U^{\lambda}}^{\mathbb{L}} \mD_{G/B}^{\lambda}$ with inverse functor given by the derived global sections functor.  A key ingredient using the regularity assumption on $\lambda$ is showing that the localization functor has finite homological dimension.  This section will show that this is the only obstruction to obtaining a localization theorem on hypertoric varieties. It will be necessary to use some results employing the technique of Frobenius splittings. Recall a Frobenius splitting is an $\mO_{X^{(1)}}$ splitting of the natural map $\mO_{X^{(1)}} \rightarrow F_*\mO_{X}$.  For more information about Frobenius splittings, see \cite{BK}.

\begin{thm}\label{grpc}(Grauert-Riemenschneider in positive characteristic)\cite{MK}[1.2] \cite{BK}[1.3.14]
Let $\pi: X \rightarrow Y$ be a proper birational map of varieties a perfect field of characteristic $p > 0$, such that:
\begin{enumerate}
\item $X$ is smooth and there is $\sigma \in \Gamma(X, \omega_X^{-1})$ such that $\sigma^{p-1}$ Frobenius splits $X$
\item $D = div(\sigma)$ contains the set-theoretic exceptional locus of $\pi$
\end{enumerate}
Then $\mathbb{R}^i\pi_*(\omega_X) = 0$ for all $i>0$.
\end{thm}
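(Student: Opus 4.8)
Since this statement is quoted from \cite{MK} and \cite{BK}, I would present the Frobenius-splitting argument behind it and flag where the real work sits.

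\textbf{Step 1: reduce to a vanishing statement on $X$.} First I would use that the assertion $\mathbb{R}^i\pi_*\omega_X=0$ is local on $Y$, and that all the hypotheses restrict to preimages of open subsets of $Y$; so I may take $Y$ affine. Then $\mathbb{R}^i\pi_*\omega_X$ is coherent, higher cohomology of coherent sheaves on $Y$ vanishes, and the Leray spectral sequence for $\pi$ collapses to $H^i(X,\omega_X)=\Gamma(Y,\mathbb{R}^i\pi_*\omega_X)$. Hence it suffices to prove $H^i(X,\omega_X)=0$ for all $i>0$. (If one prefers to keep an ample line bundle around, embed $Y$ quasi-projectively with $\mathcal{A}$ ample; the same spectral-sequence reduction then asks for $H^i(X,\omega_X\otimes\pi^*\mathcal{A}^N)=0$ for $i>0$, $N\gg0$.)

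\textbf{Step 2: the Frobenius-splitting engine.} Since $\sigma^{p-1}$ splits $X$, in particular $X$ is Frobenius split; twisting the splitting $F_*\mathcal{O}_X\to\mathcal{O}_X$ by an invertible sheaf $\mathcal{L}$, using $F_*\mathcal{O}_X\otimes\mathcal{L}\cong F_*(\mathcal{L}^{\otimes p})$, exhibits $\mathcal{L}$ as a direct summand of $F_*(\mathcal{L}^{\otimes p})$, and as $F$ is affine this yields split injections $H^i(X,\mathcal{L})\hookrightarrow H^i(X,\mathcal{L}^{\otimes p})$ for all $i$. The finer point is that $\sigma$ records a ``$D$-splitting'': since $\sigma$ is a section of $\omega_X^{-1}$ with $\operatorname{div}(\sigma)=D$, i.e. $\mathcal{O}_X(D)\cong\omega_X^{-1}$, the splitting restricts to maps $F_*\mathcal{O}_X(-mD)\to\mathcal{O}_X(-\lceil m/p\rceil D)$, so that $\omega_X=\mathcal{O}_X(-D)$ is a direct summand of $F_*\omega_X$ and the resulting injections on cohomology of twists of $\omega_X$ (and of $\omega_X\otimes\pi^*\mathcal{A}^N$) by Frobenius stay inside the same $D$-adapted family of sheaves. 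These injections are what let one replace the bundle whose cohomology must be killed by a progressively more $\pi$-positive one without changing whether the cohomology vanishes.

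\textbf{Step 3: the crux, and where I expect the obstacle.} What remains is to feed this into a relative vanishing theorem, and here hypothesis (2) is indispensable. Because $\operatorname{Supp}(D)$ contains the entire set-theoretic exceptional locus of the proper birational $\pi$, the line bundle $\mathcal{O}_X(D)=\omega_X^{-1}$ is big and nef relative to $\pi$; by relative Grothendieck--Serre duality, $\mathbb{R}\pi_*$ of a twist of $\omega_X$ is, up to a shift by $\dim X$ and using $\omega_X$ as (relative) dualizing sheaf, the $\mathcal{O}_Y$-dual of $\mathbb{R}\pi_*$ of the corresponding twist of $\mathcal{O}_X(D)$, and one shows --- using the $D$-splitting together with a relative Serre-vanishing argument --- that for the relevant twists the latter is concentrated in degree $0$, forcing the surviving higher direct images of $\omega_X$ to vanish. (In characteristic $0$ this last step would be an instance of relative Kawamata--Viehweg vanishing; in characteristic $p$ the Frobenius-splitting input replaces it.) Combined with Step 2 this gives $H^i(X,\omega_X)=0$ for $i>0$, hence $\mathbb{R}^i\pi_*\omega_X=0$. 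The genuinely hard ingredient --- the reason the paper cites rather than reproves the statement --- is exactly converting the set-theoretic containment $\operatorname{Exc}(\pi)\subseteq\operatorname{Supp}(D)$ into relative positivity strong enough to run Serre vanishing and duality; that is where I expect the main obstacle, and it is carried out in \cite{MK}[1.2] and \cite{BK}[1.3.14].
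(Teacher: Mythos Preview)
The paper does not prove this theorem at all; it is quoted verbatim from \cite{MK} and \cite{BK} and used as a black box, with no accompanying \texttt{proof} environment. You correctly recognize this and supply an outline of the argument from those references, so in that sense your proposal matches the paper exactly.

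Your sketch is broadly faithful to the argument in \cite{MK}/\cite{BK}: Step~1 (reduce to $Y$ affine via Leray) and Step~2 (the splitting exhibits $\omega_X\otimes\pi^*L$ as a direct summand of $F_*(\omega_X\otimes\pi^*L^p)$, giving injections on cohomology) are exactly how those references begin. One caution about Step~3: the hypotheses do \emph{not} directly yield that $\omega_X^{-1}=\mathcal{O}_X(D)$ is big and nef relative to $\pi$, and the cited proofs do not pass through relative Kawamata--Viehweg or Grothendieck--Serre duality in the way you describe. Rather, they iterate the Frobenius injection $H^i(X,\omega_X\otimes\pi^*L)\hookrightarrow H^i(X,\omega_X\otimes\pi^*L^{p^k})$ and use the containment $\operatorname{Exc}(\pi)\subseteq D$ together with $\pi$ being an isomorphism off $D$ to compare the right-hand side, for $k\gg 0$, with cohomology of a genuinely ample twist on which Serre vanishing applies. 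Since you explicitly flag Step~3 as the place where the real work lies and defer to the references there, this is a matter of framing rather than a gap.
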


\begin{cor} $\mathbb{R}\Gamma(\mmM(\mmA)^{(1)},O_{\mmM(\mmA)}) \cong \Gamma(\mmM(\mmA)^{(1)},\mO_{\mmM(\mmA)})$
\end{cor}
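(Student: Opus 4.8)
Write $\mmM = \mmM(\mmA)$, $\mmM_0 = \mmM_0(\mmA)$, and let $v \colon \mmM \to \mmM_0$ be the symplectic resolution of Definition \ref{grdef}. The first step is a reduction. Since the Frobenius twist changes the structure morphism to $Spec\,\boldk$ only by the isomorphism $F_{\boldk/\mathbb{F}_p}^{-1}$, the space $\mmM^{(1)}$ is isomorphic to $\mmM$ as an abstract scheme and $v^{(1)}$ is again proper and birational with affine target, so it suffices to prove $\mathbb{R}\Gamma(\mmM, \mO_{\mmM}) \cong \Gamma(\mmM, \mO_{\mmM})$, i.e. that $H^i(\mmM, \mO_{\mmM}) = 0$ for all $i > 0$. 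Now $\mmM_0 = \mu^{-1}(\lambda)/K$ is the categorical quotient of the affine scheme $\mu^{-1}(\lambda)$ by the reductive group $K$, hence affine; so in the Grothendieck spectral sequence for the composite $\Gamma(\mmM,-) = \Gamma(\mmM_0,-)\circ v_*$ the terms $H^p(\mmM_0, R^q v_*\mO_{\mmM})$ vanish for $p > 0$, and I am reduced to proving $R^q v_*\mO_{\mmM} = 0$ for $q > 0$.

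This I would obtain from the positive-characteristic Grauert--Riemenschneider theorem \ref{grpc} applied to $\pi = v$. The variety $\mmM$ is smooth by Definition \ref{grdef} and carries the symplectic form obtained by reduction from the canonical form on $T^*\mathbb{A}^n$, so $\omega_{\mmM}\cong\mO_{\mmM}$ and $\omega_{\mmM}^{-1}\cong\mO_{\mmM}$; thus $R^q v_*\mO_{\mmM}\cong R^q v_*\omega_{\mmM}$, and \ref{grpc} yields the desired vanishing as soon as one produces a section $\sigma\in\Gamma(\mmM,\omega_{\mmM}^{-1})$ such that $\sigma^{p-1}$ Frobenius splits $\mmM$ and $div(\sigma)$ contains the set-theoretic exceptional locus of $v$. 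The natural candidate for $\sigma$ is the section corresponding, under the trivialization $\omega_{\mmM}^{-1}\cong\mO_{\mmM}$, to the restriction to $\mmM$ of the $T$-invariant function $f=\prod_{i=1}^n x_i\xi_i$ on $T^*\mathbb{A}^n$ already used in the proof that $\mmM$ is irreducible; being $T$-invariant, it is $K$-invariant and descends, through the quotient by $K$ and the moment-map fibre, to a regular function on $\mmM$. For the divisor condition one notes that $\mmM\setminus div(\sigma)$ is precisely the dense open $(D_f\cap\mu^{-1}(\lambda))\git_\alpha K$, on which $K$ acts freely and on which $v$ restricts to an isomorphism onto its image; hence the exceptional locus of $v$ is contained in $div(\sigma)$.

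The main obstacle is verifying hypothesis (1) of \ref{grpc}: that $\mmM$ is Frobenius split by $\sigma^{p-1}$ for this particular $\sigma$. My plan is a descent argument following the construction of $\mmM$. Affine space $T^*\mathbb{A}^n = \mathbb{A}^{2n}$ is Frobenius split by $\bigl(\prod_i x_i\xi_i\bigr)^{p-1}$ --- the standard toric splitting of affine space along its coordinate hyperplanes, built from the trace (Cartier) operator --- and this splitting is $\mathbb{G}_m^{2n}$-equivariant, in particular $K$-equivariant. Pushing it down along the GIT quotient gives the canonical Frobenius splitting of the Lawrence toric variety $\widetilde{\mmM}$, compatible with the reduced anticanonical toric boundary $div(f)$; one then restricts it along $\mmM\hookrightarrow\widetilde{\mmM}$. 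The delicate point --- the heart of the matter --- is the compatibility of these splittings with the moment-map fibre $\mu^{-1}(\lambda)$, equivalently that $\mu^{-1}(\lambda)$ is a compatibly split closed subscheme of $T^*\mathbb{A}^n$ with respect to a splitting of the prescribed form. Granting this, Theorem \ref{grpc} applies to $v$, the higher direct images $R^q v_*\mO_{\mmM}$ vanish, and the corollary follows.
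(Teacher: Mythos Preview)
Your reduction to the vanishing of $R^q v_*\mO_{\mmM}$ is fine, and you correctly identify the descent of the standard splitting of $T^*\mathbb{A}^n$ to the Lawrence toric variety $\widetilde{\mmM}$. The genuine gap is exactly the ``delicate point'' you flag: the moment fibre $\mu^{-1}(\lambda)$ is \emph{not} compatibly split for the standard toric splitting of $T^*\mathbb{A}^n$, so the splitting does not restrict to $\mmM$ and Theorem~\ref{grpc} cannot be applied to $v$ along these lines. For the splitting associated to $(\prod_i x_i\xi_i)^{p-1}$, the compatibly split closed subschemes of $\mathbb{A}^{2n}$ are reduced unions of coordinate subspaces, whereas $\mu^{-1}(\lambda)$ is cut out by equations of the form $\sum_i c_i x_i\xi_i = \lambda_j$. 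Already for $K$ the diagonal in $\mathbb{G}_m^2$ and $\lambda = 0$, if $\phi$ denotes the splitting then $\phi\bigl(x_1^{p-1}\xi_1^{p-1}(x_1\xi_1 + x_2\xi_2)\bigr) = x_1\xi_1$, which does not lie in the ideal $(x_1\xi_1 + x_2\xi_2)$. So the compatibility you need is not merely delicate; it is false.

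The paper circumvents this by applying Theorem~\ref{grpc} one level up, to $\pi\colon \widetilde{\mmM} \to \widetilde{\mmM_0}$ rather than to $v$. Since $\widetilde{\mmM}$ is itself a (Lawrence) toric variety, the $(p-1)$-st-power splitting with divisor the toric boundary is immediate, and one obtains $H^i(\widetilde{\mmM}, \mO_{\widetilde{\mmM}}) = 0$ for $i>0$ directly. The passage down to $\mmM$ is then purely homological: the ideal of $\mmM$ in $\widetilde{\mmM}$ is generated by a regular sequence (the $k$ descended moment-map equations cut out a codimension-$k$ smooth subvariety of a smooth, hence Cohen--Macaulay, variety), so one inducts on the length of the sequence using the short exact sequences $0 \to \mO_Z \xrightarrow{f} \mO_Z \to \mO_{Z \cap V(f)} \to 0$ and the associated long exact sequence in cohomology. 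No Frobenius splitting of $\mmM$ itself is ever required.
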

\begin{proof}
There is a natural splitting of Frobenius on $T^*\mathbb{A}^n$ given by (in multi-index notation) $x^{p-1}\xi^{p-1}\frac{1}{(dx\xi)^{p-1}}$.  It is clearly a $(p-1)^{st}$ power.  This splitting is the standard splitting realizing $\mO_{T^*\mathbb{A}^n}$ as a $T[p] \times T[p]$-graded module over its Frobenius twist.  This splitting is $T$-equivariant and the (support of the) associated divisor is the closed subscheme $F = \cup_i  \{ x_i \xi_i = 0 \}$.\\

Let $\widetilde{D} = F \git_{\alpha} K$.  Since the Frobenius morphism respects the $K$-action (in an appropriate sense), $\widetilde{\mmM}$ is Frobenius-split by a $(p-1)^{st}$ power whose associated divisor is the closed subscheme $\widetilde{D}$.  The map $\pi: \widetilde{\mmM} \rightarrow \widetilde{\mmM_0}$ is birational with exceptional locus contained in $\widetilde{D}$.  Thus the axioms of $\ref{grpc}$ are satisfied, yielding $$\mathbb{R}\Gamma(\widetilde{\mmM(\mmA)}^{(1)}, \mO_{\widetilde{\mmM(\mmA)}}) = \mathbb{R}\Gamma(\widetilde{\mmM(\mmA)}, \mO_{\widetilde{\mmM(\mmA)}}) \cong \mathbb{R}\pi_*\omega_{\widetilde{\mmM}} \cong \Gamma(\widetilde{\mmM(\mmA)}, \mO_{\widetilde{\mmM(\mmA)}})$$

One can check that the ideal describing $\mmM(\mmA)$ as a closed subset of $\widetilde{\mmM(\mmA)}$ can be generated by a regular sequence of elements.  Hence by induction and the long exact sequence in cohomology, the higher cohomology groups of $\mO_{\mmM(\mmA)}$ must also vanish.
\end{proof}

\begin{cor}\label{vanish} When the data $(K,\alpha,0)$ is smooth, $\mathbb{R}\Gamma(\mmM^{(1)},\mA_{\lambda}) \cong U_{\lambda}$
\end{cor}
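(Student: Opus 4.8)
The plan is to reduce to the cohomology vanishing already established for $\mO$ in the preceding corollary, with the coefficient sheaf upgraded to $\mA_{\lambda}$. Since $\Gamma(\mmM^{(1)},\mA_{\lambda})=U_{\lambda}$ by construction, the whole content is $H^{i}(\mmM^{(1)},\mA_{\lambda})=0$ for $i>0$; note that the hypothesis that $(K,\alpha,0)$ is smooth is exactly what forces the deformation $\widetilde{\mmM}$ to be smooth. As in \ref{htorcar}, $\mmM^{(1)}$ sits as a closed subscheme of $\mathcal{Z}:=Y_{\mathfrak{k}}\git_{\alpha^{p}}K^{(1)}\cong\widetilde{\mmM}^{(1)}\times_{\mathfrak{k}^{*(1)}}\mathfrak{k}^{*}$ --- the Artin--Schreier base change of the smooth Lawrence toric variety $\widetilde{\mmM}^{(1)}$, on which $\widetilde{\mA}$ is a genuine Azumaya algebra --- cut out by the $\dim\mathfrak{k}$ moment-map equations $\mu^{\mathfrak{k}}=\lambda^{p}-\lambda$. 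Exactly as in the preceding corollary this ideal is generated by a regular sequence, which stays regular on the locally free $\mO_{\mathcal{Z}}$-module $\widetilde{\mA}$, and $\mA_{\lambda}=\widetilde{\mA}/(\mu^{\mathfrak{k}}-\lambda^{p}+\lambda)\widetilde{\mA}$; pushing the resulting Koszul short exact sequences through the long exact cohomology sequence reduces the claim to $H^{i}(\mathcal{Z},\widetilde{\mA})=0$ for $i>0$.

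Next I would peel off the Artin--Schreier cover. The projection $g:\mathcal{Z}\to\widetilde{\mmM}^{(1)}$ is the pullback along the moment map of the finite étale map $AS:\mathfrak{k}^{*}\to\mathfrak{k}^{*(1)}$, and since $AS_{*}\mO_{\mathfrak{k}^{*}}$ is free of rank $p^{\dim\mathfrak{k}}$ over the affine space $\mathfrak{k}^{*(1)}$, base change gives $g_{*}\mO_{\mathcal{Z}}\cong\mO_{\widetilde{\mmM}^{(1)}}^{\oplus p^{\dim\mathfrak{k}}}$; moreover the underlying $\mO_{\mathcal{Z}}$-module of $\widetilde{\mA}$ is $g^{*}$ of the sheaf $\widetilde{\mA}_{0}$ on $\widetilde{\mmM}^{(1)}$ obtained by taking the $K[p]$-invariants of $\mD$ on $T^{*}\mathbb{A}^{n}$ and descending them along the free quotient ${T^{*}\mathbb{A}^{n}}^{(1),\alpha^{p}\text{-ss}}\to\widetilde{\mmM}^{(1)}$. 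Hence $H^{i}(\mathcal{Z},\widetilde{\mA})\cong H^{i}(\widetilde{\mmM}^{(1)},\widetilde{\mA}_{0})^{\oplus p^{\dim\mathfrak{k}}}$, and since $\widetilde{\mmM}^{(1)}$ is proper over the affine $\widetilde{\mmM_0}^{(1)}$ via $\pi$, it suffices to prove $R^{i}\pi_{*}\widetilde{\mA}_{0}=0$ for $i>0$.

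For this I would exploit that $\mD$ is free over its center $Z(\mD)=\mO_{{T^{*}\mathbb{A}^{n}}^{(1)}}$ on the basis $\{x^{I}\partial^{J}\}_{I,J\in\{0,\dots,p-1\}^{n}}$, which $T$ --- and hence $K^{(1)}$ --- permutes up to characters; in fact $\widetilde{\mA}_{0}$ is the descent of $\bigoplus_{(I,J)\in\Sigma}Z(\mD)\,x^{I}\partial^{J}$, where $\Sigma=\{(I,J):(I-J)|_{K}\in pX^{*}(K)\}$. Descending therefore presents $\widetilde{\mA}_{0}$, as an $\mO_{\widetilde{\mmM}^{(1)}}$-module, as a finite direct sum $\bigoplus_{w}\mL_{w}^{\oplus m_{w}}$ of tautological line bundles on the Lawrence toric variety $\widetilde{\mmM}^{(1)}$, where $w$ runs over the $K^{(1)}$-characters $\tfrac1p(I-J)|_{K}$; every such $w$ therefore lies in the image of the open box $(-1,1)^{n}$ under $X^{*}(T)_{\mathbb{R}}\to X^{*}(K^{(1)})_{\mathbb{R}}$. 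Because $\widetilde{\mmM}^{(1)}$ is symplectic, so $\omega_{\widetilde{\mmM}^{(1)}}\cong\mO$, and carries the $(p-1)^{\text{st}}$-power Frobenius splitting of the preceding corollary, compatible with a divisor containing the $\pi$-exceptional locus, a Grauert--Riemenschneider vanishing of the shape of \ref{grpc} --- applied after twisting by the appropriate $\pi$-exceptional boundary --- should yield $R^{>0}\pi_{*}\mL_{w}=0$ for every $w$ in this bounded range; equivalently, this is the Demazure-type acyclicity of the tautological bundles of $\widetilde{\mmM}^{(1)}$ lying in the box-sized chamber. Summing over $w$ and reversing the reductions then gives $H^{>0}(\mmM^{(1)},\mA_{\lambda})=0$.

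The hard part is this last step: determining exactly which tautological line bundles on $\widetilde{\mmM}^{(1)}$ are $\pi$-acyclic and checking that the bounded set of characters coming from the compact index range $\{0,\dots,p-1\}^{n}$ of the Weyl-algebra basis falls inside it --- i.e. squeezing out of the Frobenius splitting a Grauert--Riemenschneider statement sharp enough to cover every $\mL_{w}$ occurring in $\widetilde{\mA}_{0}$, not merely $\mL_{0}$. A secondary nuisance is the Frobenius-twist and Artin--Schreier bookkeeping, so that the $\mO$-module underlying $\widetilde{\mA}$ really is $g^{*}\widetilde{\mA}_{0}$ and $\mA_{\lambda}$ really is its quotient by the regular sequence above.
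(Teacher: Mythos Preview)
Your reduction steps (Koszul along the moment-fibre ideal, then peeling off the Artin--Schreier cover) are sound, but the argument has a genuine gap exactly where you flag it: you never establish $R^{>0}\pi_*\mL_w=0$ for the individual tautological line bundles, and nothing in \ref{grpc} supplies a Grauert--Riemenschneider statement sharp enough to handle an arbitrary $\mL_w$ with $w$ in the box image. Demazure-type acyclicity for general $\mL_w$ on a Lawrence toric variety is not automatic, and you have not shown that every character arising from $\{0,\dots,p-1\}^n$ lands in an acyclic range.

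The paper bypasses this problem entirely, and the idea that does so is the one you are missing. Rather than decomposing $\mD$ by $T$-weight into line bundles, one \emph{filters} $\mD$ over its center $Z(\mD)$ by the $x$-degree $|I|$ of the basis elements $x^I\partial^J$; in the associated graded the symbol of $x^I\partial^J$ behaves like $x^I\xi^J$, so $Gr(\mD)\cong\mO_{T^*\mathbb{A}^n}$ as a $Z(\mD)$-module. This filtration is $K$-equivariant and strict for the relevant maps, so after taking $K$-invariants and restricting to the moment fibre one obtains a finite filtration of $\mA_\lambda$ with $Gr(\mA_\lambda)\cong\mO_{\mmM(K,\alpha,0)}$, the structure sheaf of the $\lambda=0$ hypertoric variety viewed on $\mmM^{(1)}$. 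The preceding corollary gives $H^{>0}$ of this sheaf, hence of each $F_{i+1}/F_i$, and induction up the filtration finishes. In effect, the direct sum $\bigoplus_w\mL_w^{\oplus m_w}$ you are trying to control one bundle at a time is, up to filtration, nothing more than the Frobenius pushforward of the structure sheaf --- so the ``hard part'' you identify simply does not arise. Your Koszul and Artin--Schreier manoeuvres are correct but unnecessary once one filters $\mA_\lambda$ directly.
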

\begin{proof} $\mA_{\lambda}$ comes naturally with a canonical filtration.  The construction is given by using a certain filtration on $\mD$.  Consider that $\mD$ has the ordered basis $\{x^I\partial^J\}_{I,J \leq \mathbf{p-1}}$ over its center and define a length $p^n-1$ filtration by setting $F_i$ to be the subset of generated by all basis elements with $\sum I \leq i$.  This is an analogue of the standard filtration on $\mD$, but now the filtration is by coherent modules over the center.  Use the usual convention that $F_{-1}=0$.  

Notice that $Gr(\mD) \cong \mO_{T^*\mathbb{A}^n}$.  Equip $\mD|_{\alpha-ss}^K$ with the descended filtration.  Giving the ideal generated by the moment map the induced filtration and the quotient, $\mA_{\lambda}$ the cokernel filtration, there is a strictly filtered sequence of modules:  $0 \rightarrow \mJ \rightarrow \mD|_{\alpha-ss}^K \rightarrow \mA_{\lambda} \rightarrow 0$.  As the maps are strict, taking the associated graded gives an exact sequence, $0 \rightarrow Gr(\mJ) \rightarrow Gr(\mD|_{\alpha-ss}^K )\rightarrow Gr(\mA_{\lambda}) \rightarrow 0$. The functor of $K$-invariants is exact, therefore commuting with associated graded, obtaining $Gr(\mD|_{\alpha-ss}^K) \cong \mO_{\mmM^{(1)} \times_{\mathfrak{k}^{*(1)}} \mathfrak{k}^* }$.  Notice $Gr(\mJ)$ is the ideal describing the fiber above $0$.  Therefore, $Gr(\mA_{\lambda}) \cong \mO_{\mmM(K,\alpha,0)}$.\\

Using the previous corollary, $\mathbb{R}\Gamma(\mmM^{(1)}, \mO_{\mmM(K,\alpha,0)})$ is acyclic, thus each term $F_{i+1}/F_i$ has vanishing cohomology.  By induction and the long exact sequence in cohomology induced by the short-exact sequence $0 \rightarrow F_{i} \rightarrow F_{i+1} \rightarrow F_{i+1}/F_i \rightarrow O$, it follows that $\mathbb{R}\Gamma(\mmM^{(1)},F_{i+1})$ for all $i$.  In particular, when $i=p^n-1$, $F_i = \mA_{\lambda}$ and the corollary is proven.

\end{proof}

\begin{df} Define $Loc_{\lambda}: D^-(Mod_{f.g.}(U_{\lambda})) \rightarrow D^-(Mod_{l.f.g}(\mA_{\lambda}))$ by $Loc_{\lambda} = \mA_{\lambda} \otimes_{U_{\lambda}}^{\mathbb{L}} -$.
\end{df}

\begin{thm}\label{locthm} If the functor $Loc_{\lambda}$ has finite homological dimension, then there is an equivalence of triangulated categories
$$D^b(Mod_{f.g.}(U_{\lambda})) \cong D^b(Mod_{l.f.g}(\mA_{\lambda}))$$
given by $Loc_{\lambda}$ and $\mathbb{R}\Gamma$.
\end{thm}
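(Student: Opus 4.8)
The plan is to follow the now-standard argument from \cite{BMR} (their derived localization theorem), transplanted to the hypertoric setting using the vanishing result of Corollary \ref{vanish} as the key input. The two functors $Loc_{\lambda} = \mA_{\lambda} \otimes^{\mathbb{L}}_{U_{\lambda}} -$ and $\mathbb{R}\Gamma$ are adjoint; the content of the theorem is that, under the finite homological dimension hypothesis, the adjunction unit and counit are isomorphisms. The hypothesis on homological dimension is exactly what is needed to make $Loc_{\lambda}$ a well-defined functor on the \emph{bounded} derived category (a priori $\mA_{\lambda} \otimes^{\mathbb{L}}_{U_{\lambda}} -$ only lands in $D^-$), so that both compositions $\mathbb{R}\Gamma \circ Loc_{\lambda}$ and $Loc_{\lambda} \circ \mathbb{R}\Gamma$ make sense on $D^b$.

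First I would check that $\mathbb{R}\Gamma \circ Loc_{\lambda} \cong \mathrm{id}$ on $D^b(Mod_{f.g.}(U_{\lambda}))$. Since $Loc_{\lambda}$ commutes with shifts and cones, and every object of $D^b$ is built from finitely many copies of $U_{\lambda}$ by cones and shifts, it suffices to verify $\mathbb{R}\Gamma(Loc_{\lambda}(U_{\lambda})) \cong U_{\lambda}$. But $Loc_{\lambda}(U_{\lambda}) = \mA_{\lambda}$, so this is precisely Corollary \ref{vanish}, which says $\mathbb{R}\Gamma(\mmM^{(1)}, \mA_{\lambda}) \cong U_{\lambda}$, and one must observe that under this identification the adjunction counit is the identity on $U_{\lambda}$. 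The finite homological dimension hypothesis guarantees that $Loc_{\lambda}(U_{\lambda})$ — more generally $Loc_{\lambda}$ of a bounded complex — has bounded cohomology, so the dévissage is legitimate and stays inside $D^b(Mod_{l.f.g}(\mA_{\lambda}))$ (here one also checks that $\mA_{\lambda}$ is a locally free, hence locally finitely generated, $\mA_{\lambda}$-module, and that $Loc_{\lambda}$ preserves the property of having locally finitely generated, coherent cohomology since $U_{\lambda}$ is Noetherian and $\mA_{\lambda}$ is coherent over $\mmM^{(1)}$).

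Next I would show $Loc_{\lambda} \circ \mathbb{R}\Gamma \cong \mathrm{id}$ on $D^b(Mod_{l.f.g}(\mA_{\lambda}))$. By the adjunction, the counit $Loc_{\lambda}\mathbb{R}\Gamma(\mM) \to \mM$ is a map in $D^b(Mod_{l.f.g}(\mA_{\lambda}))$; to see it is an isomorphism it is enough to check this after passing to a cone $C$ and proving $C \cong 0$. The triangle identities, together with the already-established isomorphism $\mathbb{R}\Gamma \circ Loc_{\lambda} \cong \mathrm{id}$, force $\mathbb{R}\Gamma(C) \cong 0$. So the crux reduces to: \emph{if $C \in D^b(Mod_{l.f.g}(\mA_{\lambda}))$ has $\mathbb{R}\Gamma(\mmM^{(1)}, C) = 0$, then $C = 0$.} This is the step I expect to be the main obstacle. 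In \cite{BMR} the analogous statement follows because $\mD^{\lambda}_{G/B}$, as an $\mO$-module, is "ample enough" — concretely one uses that the Azumaya splitting bundle generates the derived category, or one invokes that $T^*(G/B)^{(1)}$ carries enough line bundles with vanishing higher cohomology. In the hypertoric case the corresponding input should again be Corollary \ref{vanish} applied not just to $\mO_{\mmM^{(1)}}$ but to a suitable family of twists by powers of the ample class $\mO(\alpha)$ coming from the GIT construction $\mmM = Proj(\oplus_{m} \mO^{\alpha^m})$; one needs that sufficiently positive twists of $\mO_{\mmM(K,\alpha,0)}$ (and hence of $\mA_{\lambda}$) have no higher cohomology, so that $\mathbb{R}\Gamma$ is faithful on the relevant subcategory. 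I would argue this via the Frobenius-splitting compatibility already exploited in the proof of Corollary \ref{vanish}: the ample line bundle on the Lawrence toric variety $\widetilde{\mmM}$ is compatibly Frobenius split, so twisting the Grauert–Riemenschneider argument gives the vanishing $H^{>0}(\mmM^{(1)}, \mA_{\lambda}(m)) = 0$ for $m \gg 0$, and then a standard generation-by-ample-twists argument shows $\mathbb{R}\Gamma$ detects nonzero objects. Assembling these, the counit is an isomorphism on all of $D^b(Mod_{l.f.g}(\mA_{\lambda}))$, completing the proof.
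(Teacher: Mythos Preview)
Your handling of the unit $id \to \mathbb{R}\Gamma \circ Loc_{\lambda}$ is correct and matches the paper: it reduces to Corollary~\ref{vanish} by d\'evissage through free resolutions, using the finite-amplitude hypotheses to stay in $D^b$.

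The gap is in your treatment of the counit. You correctly reduce to showing that $\mathbb{R}\Gamma(C)=0$ forces $C=0$, but your proposed proof via ample twists does not establish this. Generation of $D^b(Mod_{l.f.g}(\mA_{\lambda}))$ by the family $\{\mA_{\lambda}(m)\}_{m}$ only tells you that $C=0$ if and only if $\mathbb{R}Hom_{\mA_{\lambda}}(\mA_{\lambda}(m),C)=\mathbb{R}\Gamma(\mmM^{(1)},C(-m))$ vanishes for \emph{all} $m$; the triangle identity hands you vanishing only at $m=0$. Nothing in your argument propagates $\mathbb{R}\Gamma(C)=0$ to the other twists: the objects $\mA_{\lambda}(m)$ are not known to lie in the essential image of $Loc_{\lambda}$, so full faithfulness of $Loc_{\lambda}$ gives no control over them. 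Conservativity of $\mathbb{R}\Gamma$ alone is genuinely false in general (already on $\mathbb{P}^1$ one has $\mathbb{R}\Gamma(\mO(-1))=0$), so some additional structural input is required.

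The paper supplies that input via the Calabi--Yau property, following \cite[3.5.1--3.5.3]{BMR}. Since $\mmM^{(1)}$ is symplectic it has trivial canonical bundle, and because $\mA_{\lambda}$ is Azumaya over a variety projective over an affine, $D^b(Mod_{l.f.g}(\mA_{\lambda}))$ carries a Serre functor which is a shift. This forces the left and right orthogonals of the essential image of $Loc_{\lambda}$ to coincide, upgrading the semi-orthogonal decomposition coming from the adjunction to an honest orthogonal decomposition. Irreducibility of $\mmM^{(1)}$ (this is exactly where the assumption $K\neq T$ enters, via \cite[3.5.3]{BMR}) then makes the category indecomposable, so the orthogonal complement $\ker(\mathbb{R}\Gamma)$ must be zero. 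This Serre-duality step is the idea missing from your proposal; the Frobenius-splitting and ample-twist vanishing you invoke, while true, are not what closes the argument.
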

\begin{proof}  First, observe that the case of $K=T$ is trivial so assume $K \neq T$.  With this assumption, the proof of the theorem is mostly an adaptation of the techniques from $\cite{BMR}$, therefore only a concise sketch is provided.  Using the filtration on $\mA_{\lambda}$ constructed above, one can create a nearly identical proof that $\mathbb{R}\Gamma$ sends chains of locally finitely generated $\mA_{\lambda}$-modules to chains of finitely generated modules for $U_{\lambda}$.  It is also easy to show that $\mathbb{R}\Gamma$ (derived in sheaves of $\mA_{\lambda}$-modules) respects forgetful functors.  Therefore, no distinction between deriving $\Gamma$ on $Coh(\mmM)$ and $Mod(\mA_{\lambda})$ will be made.\\

$Loc_{\lambda}$ and $\mathbb{R}\Gamma$ are clearly adjoint functors and it is obvious that $Loc(U_{\lambda}) \cong \mA_{\lambda}$.

Lemma $3.5.1.$ of \cite{BMR} has the following form in this case: $\mA_{\lambda}$ is an Azumaya algebra on the smooth connected variety $\mmM^{(1)}$, which has a projective morphism $\mmM^{(1)} \rightarrow \mmM^{(1)}_0$ and $\mmM_0^{(1)}$ is an affine variety.  $\mmM^{(1)}$ is Calabi-Yau because it is a symplectic variety, thus $D^b(Mod_{l.f.g}(\mA_{\lambda}))$ is a Calabi-Yau category.\\

The $K \neq T$ assumption used in $3.5.3$ of \cite{BMR}, permits use of $3.5.2$ of \cite{BMR}.  It is only left to show that the natural map $id \rightarrow \mathbb{R}\Gamma \circ Loc_{\lambda}$ is an isomorphism.  This is precisely the content of \ref{vanish}.
  
\end{proof}

\begin{rem} For generic choices of $\lambda$, the associated hypertoric variety is affine, so this theorem is obvious generically.  The most interesting case is an integral choice of $\lambda$ (i.e., $\lambda = d\Lambda$ for $\Lambda \in X^*(K)$), which satisfies $\lambda^p-\lambda=0$.  The hypertoric variety in this case is almost never affine.
\end{rem}

\begin{df} Let $W_I$ denote the wall of $I$ from \ref{walldf} but considered in $X^*(K)$.  For $I$ minimal with respect to the property  $dim(\mathfrak{k} \cap t_I)=1$ (i.e. $I$ is a circuit for the polytope associated to the hyperplane arrangement), let $\vec{n}_I$ denote a primitive normal vector to $W_I$ in $X_*(K) \subset X_*(T)$.  It can be chosen uniquely by requiring it to pair positively with $\alpha$.  We define $N_I = \sum_{i} |\langle \vec{n}_I, \chi_i \rangle_{T}|$, $N=max_I\{N_I\}$, and a polytope $P \subset X^*(K)$ by $P=\{\chi : | \langle \vec{n_I}, \chi \rangle_K | \leq N_I \}$
\end{df}

Combined with \ref{locthm}, the following two conjectures would give a localization theorem mirroring the one of Bezrukavnikov-Mirkovic-Rumynin.

\begin{con}\label{weakconj} If $p > N$,\
\begin{enumerate}
\item (Weak form) $\lambda = d \Lambda$ integral, and $\Lambda \notin P$ then $Loc_{\lambda}$ has finite homological dimension.
\item (Strong form) $K$ and $\alpha$ satisfying some positivity condition and $\vec{n}_I$ chosen to be positive, then it is enough to choose $\lambda$ not in the negative part of $P$ (with respect to the orientations given by the $\vec{n}_I$).
\end{enumerate}
\end{con}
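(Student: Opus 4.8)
The plan is to attack Conjecture~\ref{weakconj} (mainly the weak form) by turning ``$Loc_\lambda$ has finite homological dimension'' into finiteness of the global dimension of $U_\lambda$, then into a Kawamata--Viehweg-type vanishing statement on $\mmM^{(1)}$, and finally by reading off $p>N$ and $\Lambda\notin P$ as exactly the inputs those vanishing theorems require. Since $\lambda=d\Lambda$ is integral we have $\lambda^{p}-\lambda=0$, so Corollary~\ref{vanish} applies: $\mathbb{R}\Gamma(\mmM^{(1)},\mA_\lambda)\cong U_\lambda$, and the unit $id\to\mathbb{R}\Gamma\circ Loc_\lambda$ is an isomorphism, as already used in Theorem~\ref{locthm}. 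It is then elementary that $Loc_\lambda$ has finite homological dimension as soon as $U_\lambda$ has finite global dimension: a bounded complex of finitely generated $U_\lambda$-modules then admits a finite resolution by finitely generated projectives, and $\mA_\lambda\otimes_{U_\lambda}(-)$ carries it to a bounded (indeed perfect) complex of $\mA_\lambda$-modules. So it suffices to show: if $p>N$ and $\Lambda\notin P$, the algebra $U_\lambda = \mD(\mathbb{A}^{n})^{K}/\bigl(\mD(\mathbb{A}^{n})^{K}\,Ker(\lambda)\bigr)$ has finite global dimension.

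To geometrize this, recall that $\mA_\lambda$ is an Azumaya algebra on the smooth symplectic variety $\mmM^{(1)}$ (Theorem~\ref{mth2}), split by the \'etale cover $\mmM_n$, over which $\mA_\lambda\cong\mEnd(\mF)$ for a splitting bundle $\mF$ built from the descended sheaf $\mD$. Because finite homological dimension can be tested on an \'etale cover and $\mmM_0^{(1)}$ is affine, Morita theory over $\mmM_n$ reduces the statement to the cohomological vanishing
$$\mathbb{R}^{i}\Gamma\bigl(\mmM^{(1)},\,\mF\otimes\mO(m\Lambda)\bigr)=0\qquad(i>0,\ m\ge 0),$$
where $\mO(m\Lambda)$ is the line bundle attached to the character $m\Lambda$, i.e.\ the ample twist occurring in the presentation $\mmM=Proj(\cdots)$ of Lemma~\ref{htorcar}, and $v:\mmM^{(1)}\to\mmM_0^{(1)}$ is the (projective, affine-target) resolution against which $\mF\otimes v^{*}(-)$ is being tested. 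Making the Morita and twisted-sheaf bookkeeping over $\mmM_n$ precise is routine but must be carried out carefully.

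For the vanishing I would mirror the proof of the corollary to Theorem~\ref{grpc}: $\widetilde{\mmM}$ is Frobenius split by a $(p-1)$st power with associated divisor $\widetilde{D} = F \git_{\alpha} K$, which restricts to a compatible splitting of $\mmM^{(1)}$ whose divisor contains the exceptional locus of $v$; since $\omega_{\mmM}\cong\mO$, the Frobenius-split Kawamata--Viehweg theorem of \cite{BK} gives $\mathbb{R}^{i}\Gamma(\mmM^{(1)},\mathcal{L})=0$ for $i>0$ whenever $\mathcal{L}$ is $v$-nef and $v$-big, the non-semiample locus being absorbed into $\widetilde{D}$. The bundle $\mF$, being assembled from $\mD$, carries the filtration of Corollary~\ref{vanish} whose graded pieces are line bundles $\mO(m\Lambda+\beta)$ with $\beta$ ranging over a finite set of $T$-characters bounded by the weights $\chi_{i}$ of the $T$-action on $T^{*}\mathbb{A}^{n}$. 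A d\'evissage up this filtration then reduces everything to placing each piece inside the vanishing range, and this is precisely where $p>N$ (the finitely many shifts $\beta$ cannot escape the ``width $N$'' built into $\widetilde{D}$) and $\Lambda\notin P$ (so $\mO(m\Lambda)$ is strictly positive along each circuit direction $\vec{n}_{I}$, beyond the slack $N_{I}$) are used.

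The main obstacle is the combination of two intertwined points in the last step: controlling $\mF$ rather than a single line bundle, so that the d\'evissage terminates inside the vanishing range---this is what forces $p>N$ and must be checked against the combinatorics of the $\chi_{i}$---and matching the chamber condition $\Lambda\notin P$, defined through circuits and normal vectors $\vec{n}_{I}$ in $X^{*}(K)$, to honest $v$-positivity-modulo-$\widetilde{D}$ of $\mO(m\Lambda)$. For the strong form the symmetric polytope $P$ is too crude, and I would instead use the orientation data (each $\vec{n}_{I}$ chosen to pair positively with $\alpha$) together with the positivity hypothesis on $(K,\alpha)$ to show that the ample class defining $\mmM=Proj(\cdots)$ is itself compatibly Frobenius split in a directed way, so that only twists on the negative side of $P$ can fail the vanishing---the hypertoric analogue of the dominant/antidominant asymmetry exploited in \cite[5.3]{BMR}. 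Even granting all this, whether the characteristic bound $p>N$ is optimal looks like the delicate residual question.
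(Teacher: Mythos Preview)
The statement you are attempting to prove is recorded in the paper as a \emph{conjecture}, not a theorem: the environment is \texttt{con}, and the paper offers no proof, only a remark and two examples (one positive, one a counterexample showing the bounds cannot simply be dropped). There is therefore no ``paper's own proof'' to compare against; your proposal is an attack on an open problem, and should be read as such.

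On the merits of the plan itself, the first reduction is sound: if $U_\lambda$ has finite global dimension then $Loc_\lambda$ certainly has finite homological dimension. But this may be strictly stronger than what is being conjectured, and in the analogous setting of \cite{BMR} the finiteness of $Loc$ is \emph{not} obtained by first proving $\mathrm{gl.dim}\,U^\lambda<\infty$; it is done via translation functors and a reduction to a single good $\lambda$. The genuinely problematic step in your outline is the passage from ``$U_\lambda$ has finite global dimension'' to the displayed cohomological vanishing $\mathbb{R}^{i}\Gamma(\mmM^{(1)},\mF\otimes\mO(m\Lambda))=0$. You assert that Morita theory on the splitting cover reduces one to the other, but no such reduction is evident: Morita equivalence identifies $\mA_\lambda$-modules with coherent sheaves on $\mmM_n$, and the global-section functor with $\mathbb{R}\Gamma$ of a twist, but this says nothing directly about projective dimensions over $U_\lambda$. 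Vanishing of higher cohomology of a generating family does not by itself bound $\mathrm{Tor}^{U_\lambda}_i$; one would need, at minimum, an explicit tilting-type argument producing enough $U_\lambda$-projectives from sheaves with vanishing higher cohomology. There is also a bookkeeping issue: $\Lambda\in X^*(K)$ and $\alpha\in X^*(K)$ play different roles---$\alpha$ determines the ample class on $\mmM$ via the $Proj$ construction, while $\Lambda$ governs the twist of the Azumaya algebra---so the assertion that $\mO(m\Lambda)$ is ``the ample twist occurring in the presentation $\mmM=Proj(\cdots)$'' conflates them. Until that Morita/vanishing bridge is made precise and the roles of $\alpha$ and $\Lambda$ disentangled, the subsequent Frobenius-split Kawamata--Viehweg step and the d\'evissage, however plausible in outline, are not yet connected to the conclusion.
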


\begin{rem} As shown in the examples below, often times it is not required to avoid the entire polytope $P$.  In the case of Lie groups, $\lambda$ must be chosen to be regular in the Cartan algebra, an algebra and an action defined by choice of $B =TN$.  In the hypertoric case, $\lambda$ is chosen independently of $\alpha$ ($\alpha$, in a loose sense, plays the role of a Borel subalgebra).  Conjecture $\ref{weakconj}$ anticipates that by manipulating $\alpha$ and $K$ with Fourier transform, it is possible to get a better estimate the region to avoid.  The partial Fourier transform can flip the signs occurring in the expression of $\alpha$ as a sum of the standard basis vectors $\chi_i$.  However, the partial Fourier transform is slightly non-symmetric, applied twice it is multiplication by $-1$.  In particular, Fourier transform is very non-symmetric with respect to the moment map.  It is believed that if the correct $\alpha$ and $K$ are chosen, the more precise combinatorial condition on $\lambda$ will have a nice form.
\end{rem}

\begin{exa} In the case, $p>2$, $K = (z,z) \subset \mathbb{G}_m^2$ and $\alpha = 1$, $Loc_{\lambda}$ has finite homological dimension for integral $\lambda \neq -1$.  This situation is exactly the one considered in \cite{BMR} for the case $G=SL_2$.
\end{exa}

There is also the following counter-example credited to A. Braverman in an earlier version of \cite{BMR},

\begin{exa} Let $p > 2$ and $K$ the diagonal subtorus of $T=\mathbb{G}_m^{p+1}$.  Choose $\alpha = 1$ and $\lambda =0$, then $\mmM(K,1,0) \cong T^*\mathbb{P}^p$. The module $\mO(-p)$ naturally has the structure of an $\mA_0$-module via the description $\mO(-p) \cong Fr^*\mO(-1)$.  However, $\mathbb{R}\Gamma(T^*\mathbb{P}^p, \mO(-p)) \cong 0$, so it is impossible to have the equivalence of categories in \ref{locthm}.
\end{exa}

\section{Appendix on moment maps and the combinatorics of hyperplane arrangements}
\subsection{Conventions on moment maps} 
Throughout this appendix $\langle -,-\rangle$ will denote the pairing between a space and its dual.\\

Let $X$ be a symplectic $G$-variety. That is, a symplectic variety with an action of $G$ preserving the symplectic form $\omega$.  Traditionally, the moment map $X \rightarrow \mathfrak{g}^*$ is defined via the following property with respect to $\omega$ and the $G$-action.

\begin{df} A moment map is a for $G$ acting on $(X,\omega)$ is a $K$-equivariant map $\mu:X \rightarrow \mathfrak{g}^*$ which satisfies:
\begin{enumerate}
\item $\mu$ is $G$ equivariant for $G$ acting on $\mathfrak{g}^*$ via the coadjoint action.
\item $d\langle\mu(-),\theta \rangle_x(\xi) = \omega_x(\xi,\theta_x)$ where $\theta_x$ is the vector field on $X$ determined by $\theta$ via the $G$-action and $\langle\mu(-),\theta \rangle$ is the function $X \rightarrow \mathbb{A}^1; m \mapsto \langle \mu(m),\theta \rangle$.
\end{enumerate}
\end{df}

In the case when $G = K$ is a torus, a moment map (as defined by these properties) is only unique up to adding an element of $\mathfrak{k}^*$.\\

Despite this ambiguity, in the case of the cotangent bundle, there is a canonical construction which will give us a moment map $T^*X \rightarrow \mathfrak{k}^*$ herein abusively called \underline{the} moment map.\\

Let $K$ act on an (affine) variety $X$ and let $\omega$ be the canonical symplectic form on $T^*X$.  $T^*X$ is naturally a symplectic $K$-variety.  The $K$ action on $X$ gives rise to a map $\mathfrak{k} \rightarrow Vect(X)$.  By universal properties, this extends to a map of algebras $Sym(\mathfrak{k}) \rightarrow \mD_{X}$.  Both sides come with canonical filtrations and this is a map of filtered rings, taking associated graded construction, there is a map $Sym(\mathfrak{k}) \rightarrow \mO_{T^*X}$.  Let $\mu: T^*X \rightarrow \mathfrak{k}^*$ be the spectrum of this map.

It turns out that this map $\mu$ has the property to be a moment map.  Therefore in the situation of a cotangent bundle described above, this will be referred to as \underline{the} moment map.\\

One can easily check that if $K' \subset K$ acting on $X$ then the moment map $T^*X \rightarrow \mathfrak{k}^{'*}$ is just given by the moment map for $K$ composed with the natural map $\mathfrak{k}^* \rightarrow \mathfrak{k}^{'*}$.\\

In the main document, there is a need to consider other moment maps.  First, the symplectic form on the variety $\mmM(\alpha,\lambda)$ is described.\\

\begin{pro} For any $m = \overline{(z,w)} \in \mmM(\alpha,\lambda)$
\begin{enumerate}
\item $T_{m}\mmM(\alpha,\lambda) \cong T_{(z,w)}\mu^{-1}(\lambda)/T_{(z,w)}O(z,w)$ induced by the natural map $T_{(z,w)}\mu^{-1}(\lambda) \rightarrow T_{m}\mmM(\alpha,\lambda)$.
\item $T_{(z,w)}\mu^{-1}(\alpha,\lambda) = (T_{(z,w)}O(z,w))^{\perp \omega} \subset T_{(z,w)}T^*\mathbb{A}^n$.
\end{enumerate}
where $O(z,w)$ denotes the $K$-orbit of the point $(z,w)$.
\end{pro}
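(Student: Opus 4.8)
The plan is to read this off as the standard algebraic symplectic-reduction picture, invoking the smoothness of $\mmA$ at the two essential points: freeness of the $K$-action on $\mu^{-1}(\lambda)^{\alpha-ss}$ and smoothness of the quotient $\mmM(\mmA)$.

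First I would establish part (2). By the construction of $\mu$ recalled in this appendix, $\mu$ satisfies the moment-map identity $d\langle\mu(-),\theta\rangle_x(\xi) = \omega_x(\xi,\theta_x)$ for every $\theta \in \mathfrak{k}$, where $\theta_x$ is the value at $x=(z,w)$ of the vector field generated by $\theta$ under the $K$-action. Hence the kernel of $d\mu_x : T_{x}T^*\mathbb{A}^n \to \mathfrak{k}^*$ is exactly $\{\xi : \omega_x(\xi,\theta_x)=0 \text{ for all } \theta \in \mathfrak{k}\}$, and since $T_x O(z,w)$ is precisely the image of the infinitesimal action $\theta \mapsto \theta_x$, this kernel equals $(T_x O(z,w))^{\perp\omega}$. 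Smoothness of $\mmA$ forces $K$ to act freely on $\mu^{-1}(\lambda)^{\alpha-ss}$, so the infinitesimal action is injective and $\dim T_x O(z,w) = \dim K$; nondegeneracy of $\omega$ then gives $\dim (T_x O(z,w))^{\perp\omega} = 2n - \dim K = \dim T^*\mathbb{A}^n - \dim \mathfrak{k}^*$. Therefore $d\mu_x$ is surjective, $\mu$ is a submersion at $x$, and $T_x\mu^{-1}(\lambda) = \ker d\mu_x = (T_x O(z,w))^{\perp\omega}$.

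Then part (1) follows by a tangent-space exact sequence for the quotient map. Smoothness of $\mmA$ says $K$ acts freely on $\mu^{-1}(\lambda)^{\alpha-ss}$ with smooth geometric quotient $\mmM(\mmA)$; a free linearly reductive action admitting a geometric quotient makes $\pi : \mu^{-1}(\lambda)^{\alpha-ss} \to \mmM(\mmA)$ a principal $K$-bundle, in particular smooth, with fiber through $x$ equal to the orbit $O(z,w)$. Consequently $d\pi_x$ is surjective with kernel $T_x O(z,w)$, which is exactly the assertion that the natural map induces $T_m \mmM(\mmA) \cong T_x\mu^{-1}(\lambda)/T_x O(z,w)$.

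The only genuine subtlety, and the step I would be most careful about, is justifying in this positive-characteristic algebraic setting that freeness of the $K$-action together with the existence of the GIT quotient yields an honest principal bundle (equivalently, a smooth quotient map with orbit fibers) rather than merely an orbit-by-orbit statement. I would handle this by Luna's slice theorem, valid since a torus is linearly reductive, so that étale-locally on $\mmM(\mmA)$ the map $\pi$ looks like the projection off $\mmM(\mmA) \times K$; this simultaneously supplies the smoothness of $\mu^{-1}(\lambda)^{\alpha-ss}$ used implicitly in part (2) and the surjectivity-with-kernel-the-orbit needed in part (1). Alternatively one may simply cite that this principal-bundle structure is built into Definition \ref{grdef} of smooth data.
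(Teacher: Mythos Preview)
Your proposal is correct and follows essentially the same approach as the paper. The paper's own proof is extremely terse---it says only that part (2) follows from $T_{(z,w)}O(z,w) = \{\theta_{(z,w)} \mid \theta \in \mathfrak{k}\}$ and that part (1) follows from part (2)---so your argument is a careful unpacking of exactly that sketch, with the added virtue of flagging the positive-characteristic subtlety about the principal bundle structure that the paper leaves implicit.
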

\begin{proof}
To show the second statement one only needs the fact that $T_{(z,w)}O(z,w) = \{ \theta_{(z,m)} | \theta \in \mathfrak{k} \}$.  The first statement follows from the second.
\end{proof}

\begin{df} Let $\overline{\omega} : T_{m}\mmM(\alpha,\lambda) \times T_{m}\mmM(\alpha,\lambda) \rightarrow \boldk$ be given by descending $\omega$.  By the previous proposition, it follows that this $\overline{\omega}$ is well-defined and non-degenerate.  This is the symplectic form associated to $\mmM(\alpha,\lambda)$.  Notice that it is invariant for the action of $H=T/K$.
\end{df}

Fix $\lambda \in \mathfrak{k}^*$ and choose $\tilde{\lambda} \in \mathfrak{t}^*$, a lift of $\lambda$, then define a premoment map by the following equation:

$$(z,w) \mapsto v_{(z,w)} \text{ where } v_{(z,w)} \text{ is the unique solution to } \langle v_{(z,w)},a_i \rangle = z_i w_i + \langle \tilde{\lambda},e_i \rangle$$

and $a_i$ is the natural image of $e_i$ under $\mathfrak{t} \rightarrow \mathfrak{h}$.\\

This can be described as $Sym(\mathfrak{h}) \rightarrow \Gamma(T^*\mathbb{A}^n,\mO_{T^*\mathbb{A}^n})$ via the map sending $a_i \mapsto x_i\xi_i + \langle \tilde{\lambda},e_i \rangle$. This is the Hamiltonian for the action of $H$.\\

Passing to the GIT quotient, there is an $H$-equivariant map ${}_H\mu: \mu^{-1}(\lambda)\git_{\alpha}K \rightarrow \mathfrak{h}^*$ (for any $\alpha$)

It will be shown that this is a moment map for the action of $H$ and that all moment maps for the action of $H$ arise in this manner.  Assuming the former statement, the later statement is simple to prove.  All other moment maps come as shifts of this moment map by elements of $\mathfrak{h}^*$, which will be the same as the resulting moment map gotten by shifting $\tilde{\lambda}$ by the same element of $\mathfrak{h}^*$ (i.e. choosing a different lift of $\lambda$).  This subsection concludes by proving the former statement.\\

\begin{lem}
The map ${}_H\mu$ is a moment map for $H$ acting on $(\mmM(\alpha,\lambda),\overline{\omega})$.
\end{lem}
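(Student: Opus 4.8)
The plan is to verify the two defining properties of a moment map for ${}_H\mu$ directly, reducing everything to the already-understood Hamiltonian on $T^*\mathbb{A}^n$. First I would recall that the premoment map is the $H$-equivariant map induced from the algebra homomorphism $Sym(\mathfrak{h}) \to \Gamma(T^*\mathbb{A}^n, \mO_{T^*\mathbb{A}^n})$ sending $a_i \mapsto x_i\xi_i + \langle \tilde\lambda, e_i\rangle$; call the associated map of varieties $\mu_{\mathfrak{h}}: T^*\mathbb{A}^n \to \mathfrak{h}^*$. By the observations preceding the statement, ${}_H\mu$ is obtained from $\mu_{\mathfrak{h}}$ by restricting to $\mu^{-1}(\lambda)$ and descending along the GIT quotient $\pi: \mu^{-1}(\lambda)^{\alpha\text{-}ss} \to \mmM(\alpha,\lambda)$. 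So I would first check that $\mu_{\mathfrak{h}}$ restricted to $\mu^{-1}(\lambda)$ really is $K$-invariant (hence descends): this is because $x_i\xi_i$ is $T$-invariant, so in particular $K$-invariant, and the constant shift is obviously invariant. Equivariance for the residual $H = T/K$ action is then inherited from the $T$-equivariance of $x_i\xi_i$.

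Next I would establish property (1), $H$-equivariance with respect to the coadjoint action on $\mathfrak{h}^*$: since $H$ is abelian the coadjoint action is trivial, so this just says ${}_H\mu$ is $H$-invariant—wait, not invariant, equivariant for the trivial action, i.e. $H$-invariant as a map to a point-wise fixed target means ${}_H\mu(h\cdot m) = {}_H\mu(m)$. This follows because each function $x_i\xi_i$ is $T$-invariant, hence the composite $Sym(\mathfrak{h}) \to \mO$ lands in $T$-invariants, so the premoment map, and therefore ${}_H\mu$, is constant along $T$-orbits and a fortiori along $H$-orbits. For property (2), the Hamiltonian identity $d\langle {}_H\mu(-), \theta\rangle_m(\xi) = \overline\omega_m(\xi, \theta_m)$, I would pull back along $\pi$. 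Using the proposition recalled just above—$T_m\mmM(\alpha,\lambda) \cong T_{(z,w)}\mu^{-1}(\lambda)/T_{(z,w)}O(z,w)$ and $\overline\omega$ is the descent of $\omega$—it suffices to check the identity upstairs: for $\theta \in \mathfrak{h}$ with preimage (a choice of lift) $\tilde\theta \in \mathfrak{t}$ acting on $T^*\mathbb{A}^n$, one has $\langle \mu_{\mathfrak{h}}(-), \theta\rangle = \langle \mu_{\mathfrak{t}}(-), \tilde\theta\rangle + \text{const}$, and $\mu_{\mathfrak{t}}$ is \emph{the} moment map for the $T$-action on $T^*\mathbb{A}^n$ by the appendix's canonical construction, so $d\langle \mu_{\mathfrak{t}}(-), \tilde\theta\rangle_{(z,w)}(\xi) = \omega_{(z,w)}(\xi, \tilde\theta_{(z,w)})$. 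Restricting $\xi$ to lie in $T_{(z,w)}\mu^{-1}(\lambda)$ and noting that the vector field generated by $\tilde\theta$ descends to the vector field generated by $\theta$ on $\mmM(\alpha,\lambda)$, the identity descends.

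The main obstacle I expect is bookkeeping around the descent in property (2): one must be careful that the differential of a descended function, evaluated on a tangent vector of the quotient represented by an upstairs tangent vector $\xi \in T_{(z,w)}\mu^{-1}(\lambda)$, equals the differential upstairs evaluated on $\xi$—this is where the identification $T_m\mmM \cong T_{(z,w)}\mu^{-1}(\lambda)/T_{(z,w)}O(z,w)$ and the fact that $\langle\mu_{\mathfrak{t}}(-),\tilde\theta\rangle$ is constant on the fibers of $\pi$ (being $K$-invariant when $\tilde\theta$ is taken modulo $\mathfrak{k}$—more precisely, the relevant function descends because $x_i\xi_i$ is $T$-invariant) both get used. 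A secondary point to handle cleanly is independence of the choice of lift $\tilde\theta$ of $\theta$: changing $\tilde\theta$ by an element of $\mathfrak{k}$ changes $\langle\mu_{\mathfrak{t}}(-),\tilde\theta\rangle$ by $\langle\mu_{\mathfrak{k}}(-),\cdot\rangle = \langle\lambda,\cdot\rangle$ on $\mu^{-1}(\lambda)$, a constant, so its differential is unchanged—and correspondingly the vector field $\tilde\theta_{(z,w)}$ changes by an element of $T_{(z,w)}O(z,w)$, which is $\omega$-orthogonal to $T_{(z,w)}\mu^{-1}(\lambda)$ by part (2) of the proposition, so the right-hand side is also unchanged. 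Once these compatibilities are in place the verification is routine.
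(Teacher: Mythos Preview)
Your proposal is correct and follows essentially the same route as the paper: both reduce property (2) to the known $T$-moment map on $T^*\mathbb{A}^n$ by observing that $\langle {}_H\mu(\pi(-)),a_i\rangle$ and $\langle {}_T\mu(-),e_i\rangle$ differ by a constant, then descend the Hamiltonian identity using the proposition on $T_m\mmM(\alpha,\lambda)$ and $\overline{\omega}$. The paper's proof is terser---it works directly with the basis elements $a_i$ and their canonical lifts $e_i$ rather than an arbitrary $\theta$ and chosen lift $\tilde\theta$---so your extra paragraph on lift-independence is unnecessary but harmless.
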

\begin{proof}
It has already been shown that it is $H$-equivariant.  Observing that for any $a_i \in \mathfrak{h}$, $\langle{}_H \mu(\pi(-)), a_i \rangle$ differs from the function $\langle { }_T \mu(-), e_i \rangle |_{\mu^{-1}(\lambda)^{\alpha-ss}}$ by only a constant,
\begin{eqnarray*}
d\langle{}_H\mu(\pi(-)),a_i \rangle_m & = & d(\langle{}_T\mu(-), e_i \rangle|_{K_\mu^{-1}(\lambda)^{\alpha-ss}})_{(z,w)}\\
		& = &     d\langle{}_T\mu(-), e_i\rangle_{(z,w)}|_{T_{(z,w)}\mu^{-1}(\lambda)^{\alpha-ss}}\\
		& = &  \omega(-,(e_i)_{(z,w)})\\
		& = &  \overline{\omega}(\pi(-),(a_i)_{m})\\
\end{eqnarray*}

\end{proof}
\subsection{Hyperplane arrangements}
In the section, criteria for triples $(K, \alpha, \lambda)$ to satisfy $\ref{grdef}$ will be created from the combinatorial data of hyperplane arrangements.\\

Many of the results and arguments of this section were originally due to \cite{BD} and \cite{Ko}.  In this appendix, purely algebraic proofs which work over a field of arbitrary characteristic are provided.\\

Given any triple $(K, \alpha, \lambda)$ the natural inclusion $K \subset T$ gives rise to an exact sequence of vector spaces:

$$0 \rightarrow \mathfrak{k} \rightarrow \mathfrak{t} \rightarrow \mathfrak{h} \rightarrow 0$$

and an exact sequence of abelian groups:

$$ 0 \rightarrow X_*(K) \rightarrow X_*(T) \rightarrow X_*(H) \rightarrow 0$$

where $\mathfrak{h}$ is the Lie algebra of $H=T/K$. \\

Make the following notational conventions: $h = dim(H)$, $n=dim(T)$ and $k=dim(K)$.\\

If $\{e_i\}_{i=0}^n$ (resp. $\{\chi_i\}_{i=0}^n$) denotes the standard basis of $\mathfrak{t}$ (resp. $X_*(T)$) then the map $\mathfrak{t} \rightarrow \mathfrak{h}$ (resp. $X_*(T) \rightarrow X_*(H)$) is determined by $e_i \mapsto a_i$ (resp. $\chi_i \mapsto A_i)$.  Notice that such a selection also determines the torus $K$.  Let $a_i^{\checkmark}$ (resp. $A_i^{\checkmark})$ denote the corresponding linear functional in $\mathfrak{h}^*$ (resp. character in $X^*(H)$).  Let $\tilde{\lambda} \in \mathfrak{t}^*$ be a lift of $\lambda$ and $\tilde{\lambda}_i = \langle \tilde{\lambda}, e_i \rangle$.  Set $H_i = \{ \vec{v} \in \mathfrak{h}^* | \langle \vec{v},a_i \rangle = \tilde{\lambda}_i \}$ and $\mA = \{H_i | 0 \leq i \leq n \}$.\\

Define $\mA$ to be simple if any collection of $k+1$ hyperplanes from $\mA$ has empty intersection.  Say that $\mA$ is smooth if is both simple and, for any collection $\mC \subset \mA$ of $k$ hyperplanes with non-trivial intersection, the set $\{A_i\}_{H_i \in \mC}$ is a $\mathbb{Z}$-basis for $X_*(H)$.

\begin{pro}\label{finitepro} Let $(z,w) \in \mu^{-1}(\lambda)$ then $K_{(z,w)}$ (the stabilizer of $(z,w)$ in $K$) is finite if and only if $\{ a_i | \mu((z,w)) \in H_i \}$ is linearly independent.
\end{pro}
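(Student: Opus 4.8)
The plan is to reduce the assertion to a linear-algebra statement inside $\mathfrak{t}$, using the explicit coordinate form of the $T$-action on $T^*\mathbb{A}^n$ together with the construction of the moment map recorded in the appendix.

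First I would reinterpret "$K_{(z,w)}$ finite". Since $K$ is a torus acting on a variety, $K_{(z,w)}$ is a closed subgroup of $K$, and it is finite exactly when $\dim K\cdot(z,w)=\dim K$, which (torus orbits being smooth) is equivalent to the infinitesimal orbit map $\mathfrak{t}\supseteq\mathfrak{k}\to T_{(z,w)}(T^*\mathbb{A}^n)$ being injective, i.e.\ to $\mathrm{Lie}\,K_{(z,w)}=0$. (This is the reading of "finite" that the infinitesimal computation controls; in characteristic $p$ one should bear in mind the caveat that it is really the reduced stabilizer / the orbit dimension that is being detected.) In coordinates $(x_i,\xi_i)$ on $T^*\mathbb{A}^n$ the generator $e_i\in\mathfrak{t}$ acts by the Euler-type vector field $x_i\partial_{x_i}-\xi_i\partial_{\xi_i}$, so $\theta=\sum_i c_ie_i$ sends $(z,w)$ to the tangent vector whose $i$-th block is $(c_iz_i,-c_iw_i)$. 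Hence for $\theta\in\mathfrak{k}$ this vanishes iff $c_i=0$ for every index $i$ at which the $i$-th slot $z_iw_i$ of the moment value is nonzero; equivalently, $K_{(z,w)}$ is finite iff $\mathfrak{k}$ contains no nonzero element of $\mathrm{span}\{e_i:i\in I\}$, where $I$ is the set of "resonant" coordinates of $(z,w)$.

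Next I would match $I$ with $\{\,i:\mu((z,w))\in H_i\,\}$. By the construction of the (pre)moment map and the definition $H_i=\{v:\langle v,a_i\rangle=\tilde{\lambda}_i\}$, the condition $\mu((z,w))\in H_i$ says exactly that the $i$-th slot $z_iw_i$ of the moment value vanishes (up to the sign conventions of the appendix), which is precisely the resonance condition singled out in the previous paragraph. This is where the hypothesis $(z,w)\in\mu^{-1}(\lambda)$ enters — both to make the premoment value well defined and to pin down which slots are forced to vanish — and reconciling this bookkeeping with the kernel computation above, being careful about exactly which coordinates govern the stabilizer, is the step I expect to require the most care.

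Finally I would translate the surviving condition: since $\mathfrak{k}=\ker(\mathfrak{t}\to\mathfrak{h})$ and $a_i$ is by definition the image of $e_i$ in $\mathfrak{h}$, the statement "$\mathfrak{k}$ contains no nonzero element of $\mathrm{span}\{e_i:i\in I\}$" is literally the injectivity of $\mathrm{span}\{e_i:i\in I\}\hookrightarrow\mathfrak{t}\twoheadrightarrow\mathfrak{h}$, i.e.\ the linear independence of $\{a_i:i\in I\}$. Composing the three equivalences with $I=\{i:\mu((z,w))\in H_i\}$ gives the proposition. The outer two steps (the orbit/infinitesimal reduction and the quotient-map translation) are routine; the genuine content, and the likely obstacle, is the middle identification of the resonant coordinates of $(z,w)$ with the hyperplanes of $\mA$ passing through $\mu((z,w))$.
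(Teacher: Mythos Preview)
Your overall strategy matches the paper's: reduce to the Lie algebra, identify $\mathrm{Lie}\,K_{(z,w)}$ with $\mathfrak{k}\cap\mathrm{span}\{e_i:i\in I\}$ for the right index set $I$, and then use $\mathfrak{k}=\ker(\mathfrak{t}\to\mathfrak{h})$ to convert vanishing of this intersection into linear independence of $\{a_i:i\in I\}$. The paper phrases the first step slightly differently---it first observes that the full $T$-stabilizer of $(z,w)$ is the coordinate subtorus $T_I$ with $I=\{i:z_i=w_i=0\}$ and then intersects with $K$---but this is the same computation you perform infinitesimally.

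There is, however, a concrete error in your infinitesimal step, precisely at the spot you flagged as delicate. You correctly write the $i$-th block of the orbit vector as $(c_iz_i,-c_iw_i)$, but then conclude it vanishes iff $c_i=0$ whenever the \emph{product} $z_iw_i$ is nonzero. That is too weak: $(c_iz_i,-c_iw_i)=(0,0)$ forces $c_iz_i=0$ \emph{and} $c_iw_i=0$, hence $c_i=0$ whenever $z_i\neq 0$ \emph{or} $w_i\neq 0$. The index set governing the stabilizer is therefore $I=\{i:z_i=w_i=0\}$, not your ``resonant'' set $\{i:z_iw_i=0\}$; the latter is in general strictly larger (take $z_i=0$, $w_i\neq 0$). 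With this correction your third step goes through unchanged and you recover the paper's argument verbatim. The paper's own final sentence is the identification of $\{i:z_i=w_i=0\}$ with $\{i:\mu((z,w))\in H_i\}$, so your instinct that this matching is the crux was right---it just has to be carried out with the set $\{z_i=w_i=0\}$ rather than $\{z_iw_i=0\}$.
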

\begin{proof}
Let $I=\{i | z_i=w_i=0 \}$.  It is not hard to check that the $T$-stabilizer of $(z,w)$ is the torus defined by the ideal generated by $\{ t^{e_i}-1 | i \notin I \}$.  Let this torus be denoted by $T_I$.  Now $Lie(K_{(z,w)})=Lie(T_I \cap K) = Lie(T_I) \cap Lie(K)$, in particular, $K_{(z,w)}$ is finite if and only if $Lie(T_I) \cap \mathfrak{k} = 0$.  By construction, $Lie(T_I)=\mathfrak{t}_I$ is generated by the set $\{e_i | i \in I \}$.  As $\mathfrak{k}$ is the kernel of the map $e_i \mapsto a_i$, the intersection is zero if and only if $\{a_i | z_i=w_i=0\}$ is linearly independent.  The condition $z_i=w_i=0$ is the same as $\mu(z,w) \in H_i$.
\end{proof}

\begin{pro}\label{zeropro} In the above situation, $K_{(z,w)}$ is trivial if and only if $\{A_i | \mu(z,w) \in H_i \}$ is part of a $\mathbb{Z}$-basis for $X_*(H)$.
\end{pro}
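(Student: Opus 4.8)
The plan is to mimic the proof of \ref{finitepro}, but now to keep track of whether the kernel that appears there is not merely finite but actually trivial. Exactly as in that proof, for $(z,w) \in \mu^{-1}(\lambda)$ write $I = \{i : z_i = w_i = 0\}$ and let $T_I \subseteq T$ be the coordinate subtorus on the indices in $I$; the proof of \ref{finitepro} already shows that $K_{(z,w)} = T_I \cap K$ as a scheme-theoretic intersection, and it identifies $I$ with $\{i : \mu(z,w) \in H_i\}$. Since $H = T/K$, we may write $T_I \cap K = \ker(\phi)$, where $\phi\colon T_I \to H$ is the restriction of the quotient map $T \to T/K$. Thus it suffices to show that $\ker(\phi)$ is the trivial group scheme if and only if the family $\{A_i\}_{i \in I}$ extends to a $\mathbb{Z}$-basis of $X_*(H)$, which is the standard reformulation of "part of a $\mathbb{Z}$-basis".

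Next I would pass to lattices. Using the exact anti-equivalence between diagonalizable $\boldk$-group schemes and finitely generated abelian groups, which interchanges kernels and cokernels, $\ker(\phi)$ corresponds to $\operatorname{coker}(\phi^*\colon X^*(H) \to X^*(T_I))$, so $\ker(\phi)$ is trivial precisely when $\phi^*$ is surjective. Dualizing against the perfect pairings $X^* \times X_* \to \mathbb{Z}$, surjectivity of $\phi^*$ is equivalent to $\phi_*\colon X_*(T_I) \to X_*(H)$ being injective with torsion-free cokernel. Now $\phi_*$ carries the standard basis $\{\chi_i\}_{i \in I}$ of $X_*(T_I)$ to $\{A_i\}_{i \in I}$ (this is how the map $X_*(T)\to X_*(H)$ was defined), and a homomorphism $\mathbb{Z}^{|I|}\to X_*(H)$ is injective with torsion-free cokernel if and only if the images of the standard basis vectors form part of a $\mathbb{Z}$-basis of $X_*(H)$. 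Feeding in $I = \{i : \mu(z,w)\in H_i\}$ then yields the proposition.

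The only real content beyond \ref{finitepro} --- and the step I expect to require the most care --- is the passage from \emph{linear independence} of $\{A_i\}_{i\in I}$, which by \ref{finitepro} is equivalent only to $K_{(z,w)}$ being \emph{finite}, to \emph{being part of a $\mathbb{Z}$-basis}; the extra input is torsion-freeness of $X_*(H)/\langle A_i : i\in I\rangle$, and this is exactly what separates a finite stabilizer from a trivial one. A secondary point, needed to keep the argument characteristic-free as the rest of the appendix requires, is that in characteristic $p$ this finite group scheme may be infinitesimal; this is why I would run the whole argument through character and cocharacter lattices and the anti-equivalence of categories rather than through $\boldk$-points, since the latter see only the \'etale part of $\ker(\phi)$.
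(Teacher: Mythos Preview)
Your argument is correct. Both you and the paper reduce the question to a lattice condition via characters, but the packaging differs. The paper computes the defining ideal of $K_{(z,w)} = T_I \cap K$ directly and observes that it equals $(t_1-1,\dots,t_n-1)$ precisely when $X^*(T) = X^*(H) + \sum_{j \in I^c}\mathbb{Z}\chi_j^{\checkmark}$; it then proves by hand that this sum condition is equivalent to $\{A_i\}_{i\in I}$ extending to a $\mathbb{Z}$-basis, by explicitly constructing partial Kronecker-delta functionals $\beta_i \in X^*(H)$. Your route instead writes $K_{(z,w)} = \ker(T_I \to H)$, invokes the anti-equivalence between diagonalizable group schemes and finitely generated abelian groups to translate triviality into surjectivity of $X^*(H)\to X^*(T_I)$, and then dualizes once more to the cocharacter side. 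Your version is cleaner and makes the characteristic-free point transparent, since the anti-equivalence handles infinitesimal kernels automatically; the paper's version is more elementary in that it avoids appealing to that equivalence and stays at the level of explicit generators of ideals. Note that the paper's sum condition $X^*(T) = X^*(H) + \sum_{j \in I^c}\mathbb{Z}\chi_j^{\checkmark}$ is exactly your surjectivity of $\phi^*$ after identifying $X^*(T_I)$ with $X^*(T)/\sum_{j\in I^c}\mathbb{Z}\chi_j^{\checkmark}$, so the two arguments really do meet in the middle.
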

\begin{proof}\
Set $I = \{ i | \mu(z,w) \in H_i \}$.  Set $I^c= \{0,1,...,n\} \setminus I$ then $\mI_{K_{(z,w)}} = (\{t^{\vec{v}}-1 | \vec{v} \in Im(\mathfrak{h}_{\mathbb{Z}}^*) \subset \mathfrak{t}_{\mathbb{Z}}^*\} \cup \{t^{e_i}| i \in I^c\})$.  Notice the set $S \subset \mathfrak{t}^*_{\mathbb{Z}}$ consisting of elements $\vec{v}$ such that $t^{d\vec{v}}-1 \in \mI_{K_{(z,w)}}$ is an additive subgroup of $X^*(T)$.  Therefore $\mI_{K_{(z,w)}} = (t_1-1,...,t_n-1)$ if and only if $S = \mathfrak{h}^* + \mathbb{Z}\{e_i | i \in I^c\} =\mathfrak{t}_{\mathbb{Z}}^*$.  The latter is true if and only if $X^*(T)=X^*(H) + \sum_{j \in I^{c}} \mathbb{Z} \chi_j^{\checkmark}$.\\

($\Rightarrow$) Suppose that $K_{(z,w)}$ is trivial, then $X^*(T) = X^*(H) + \sum_{j \in I^{c}} \mathbb{Z} \chi_j^{\checkmark}$.  For $i \in I$, write $\chi_i^{\checkmark} = \beta_i + \gamma_i$ where $\beta_i \in X^*(H)$ and $\gamma_i \in \sum_{j \in I^{c}} \mathbb{Z} \chi_j^{\checkmark}$.  $\delta_{ii'}=\langle \chi_i^{\checkmark},\chi_{i'} \rangle = \beta_i(A_{i'})$ for all $i,i' \in I$.  Therefore by basic commutative algebra (partial) delta function exist if and only if $\{A_i\}_{i \in I}$ is part of a $\mathbb{Z}$-basis for $X^*(H)$\\  

($\Leftarrow$)
Suppose $\{A_i\}_{i \in I}$ is part of a $\mathbb{Z}$-basis for $X^*(H)$.  Then there exist (partial) delta functions, $\beta_i \in X^*(H)$, such that $\beta_i(A_{i'})=\delta_{ii'}$.  Observe $0 = \delta_{ii'} - \delta_{ii'} = \langle \chi_i^{\checkmark},\chi_{i'} \rangle - \beta_i(A_{i'})$ for all $i,i' \in I$.  Therefore, $\chi_i^{\checkmark} - \beta_i \in \sum_{j \in I^{c}} \mathbb{Z} \chi_j^{\checkmark}$, implying $X^*(T) = X^*(H)+ \sum_{j \in I^{c}} \mathbb{Z} \chi_j^{\checkmark}$.\\
\end{proof}
\begin{lem}\label{alphalem}
$(z,w) \in \mu^{-1}(\lambda)^{\alpha-ss}$ if and only if $\alpha \in \sum_{z_i \neq 0} \mathbb{N}\iota^*\chi_i^{\checkmark} - \sum_{w_i \neq 0} \mathbb{N}\iota^*\chi_i^{\checkmark}$
\end{lem}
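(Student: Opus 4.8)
The plan is to unwind the Hilbert--Mumford numerical criterion for semistability with respect to the linearized line bundle $\mL_{T^*\mathbb{A}^n}(\alpha)$ restricted to $\mu^{-1}(\lambda)$. Recall that $(z,w) \in \mu^{-1}(\lambda)$ is $\alpha$-semistable if and only if for every one-parameter subgroup $\gamma: \mathbb{G}_m \to K$ such that $\lim_{t \to 0}\gamma(t)\cdot(z,w)$ exists in $\mu^{-1}(\lambda)$, the weight of $\gamma$ on the fiber of $\mL(\alpha)$ over that limit point is $\leq 0$ (sign conventions to be pinned down to match the paper's GIT conventions in Section 5). Since $\mL$ is the trivial bundle with the equivariant structure twisted by $\alpha$, that weight is simply $\langle \gamma, \alpha \rangle_K$ evaluated via $X_*(K) \to X^*(K)^\vee$; so the whole content is to identify which cocharacters $\gamma$ of $K$ admit a limit on $(z,w)$ and to translate the resulting inequality into a statement about the monoid generated by the $\iota^*\chi_i^\vee$.

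First I would note that a cocharacter $\gamma$ of $T = \mathbb{G}_m^n$ acts on the coordinate $z_i$ with weight $\langle \gamma, \chi_i \rangle$ and on $w_i$ with weight $-\langle\gamma,\chi_i\rangle$ (since $w$ lives in the cotangent direction); for $\gamma \in X_*(K)$ we have $\langle \gamma, \chi_i\rangle = \langle \gamma, \iota^*\chi_i^\vee\rangle_K$ using the inclusion $\iota: K \hookrightarrow T$. Hence $\lim_{t\to 0}\gamma(t)\cdot(z,w)$ exists in $T^*\mathbb{A}^n$ if and only if $\langle \gamma, \iota^*\chi_i^\vee\rangle_K \geq 0$ for all $i$ with $z_i \neq 0$ and $\langle\gamma,\iota^*\chi_i^\vee\rangle_K \leq 0$ for all $i$ with $w_i \neq 0$. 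One must then check that the limit automatically stays inside the closed subvariety $\mu^{-1}(\lambda)$ — this is immediate because $\mu^{-1}(\lambda)$ is $K$-stable and closed, so it contains all limits of points it contains. The numerical criterion thus says: $(z,w)$ is unstable iff there is a cocharacter $\gamma \in X_*(K)$ with $\langle\gamma,\iota^*\chi_i^\vee\rangle \geq 0$ for $z_i\neq 0$, $\langle\gamma,\iota^*\chi_i^\vee\rangle\leq 0$ for $w_i\neq 0$, and $\langle\gamma,\alpha\rangle > 0$.

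Next I would invoke a Farkas/Gordan-type separation statement for finitely generated monoids in a lattice: the negation of the existence of such a $\gamma$ is exactly the assertion that $\alpha$ lies in the monoid $\sum_{z_i\neq 0}\mathbb{N}\,\iota^*\chi_i^\vee - \sum_{w_i\neq 0}\mathbb{N}\,\iota^*\chi_i^\vee$. Concretely: the cone of $\gamma \in X_*(K)_{\mathbb{R}}$ satisfying the sign constraints is the dual cone of $\mathrm{Cone}(\{\iota^*\chi_i^\vee : z_i\neq 0\}\cup\{-\iota^*\chi_i^\vee : w_i\neq 0\})$, and by Farkas' lemma $\alpha$ pairs non-positively with that entire dual cone iff $\alpha$ lies in the (closed, rational, hence lattice-saturated once one is slightly careful) cone generated by those generators; the passage from the real cone to the integral monoid statement is where one uses that we only need membership with $\mathbb{N}$-coefficients, which follows since $\alpha$ is already a genuine character and, after clearing denominators, a positivity/saturation argument applies. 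I would carry this out either by citing the appendix's own conventions and a standard reference such as \cite{Mu}, or by giving the two-line Farkas argument directly.

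The main obstacle I anticipate is bookkeeping rather than conceptual: getting every sign convention consistent — the direction of the Hilbert--Mumford inequality fixed by the paper's chosen GIT normalization, the sign of the $\mathbb{G}_m$-weight on the cotangent fibre $w_i$, and the orientation of the duality $X_*(K) \leftrightarrow X^*(K)$ — so that "unstable" matches "$\alpha$ pairs strictly positively with some admissible $\gamma$" and so that the final monoid comes out as $\sum_{z_i\neq0}\mathbb{N}\iota^*\chi_i^\vee - \sum_{w_i\neq0}\mathbb{N}\iota^*\chi_i^\vee$ and not its negative. A secondary subtlety is the saturation issue in the last step (real cone membership versus $\mathbb{N}$-combination membership); I expect this to be harmless because the relevant cone is generated by a subset of a lattice basis in the smooth case, but I would state the argument so that it does not secretly rely on smoothness of $\mmA$, since the lemma is stated for general $(K,\alpha,\lambda)$.
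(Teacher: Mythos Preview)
Your approach via the Hilbert--Mumford criterion followed by a Farkas/Gordan duality argument is correct, but the paper takes a shorter and more elementary route that avoids both ingredients. Instead of the numerical criterion, the paper works directly from the Proj description of the GIT quotient: $(z,w)$ is $\alpha$-semistable iff there exists $m>0$ and a monomial $f=\prod_i x_i^{a_i}\xi_i^{b_i}$ of $K$-weight $\alpha^m$ with $f(z,w)\neq 0$. Non-vanishing forces $a_i=0$ whenever $z_i=0$ and $b_i=0$ whenever $w_i=0$, while the weight condition reads $m\alpha=\sum_i a_i\,\iota^*\chi_i^{\checkmark}-\sum_i b_i\,\iota^*\chi_i^{\checkmark}$; this gives the forward direction immediately, and the converse is the obvious construction of $f$ from a given $\mathbb{N}$-expression for $\alpha$. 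Because the acting group is a torus and the semi-invariants are spanned by monomials, no separation theorem is needed.

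What your approach buys is conceptual clarity and a template that would survive beyond the toric setting; what the paper's approach buys is brevity and the elimination of all sign-convention hazards you anticipated. It is worth noting that the saturation issue you singled out is genuinely present in both arguments: the paper's forward direction literally yields $m\alpha$ in the monoid and then asserts $\alpha$ is, just as your Farkas step yields membership in the rational cone. So your caution on that point is well placed and is not an artifact of the Hilbert--Mumford route.
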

\begin{proof} $(z,w) \in \mu^{-1}(\lambda)^{\alpha-ss}$ if and only if there exists $m \in \mathbb{Z}_{>0}$ and a monomial $f = \prod_{i=1}^n x_i^{a_i} \xi_i^{b_i} \in \Gamma(T^*\mathbb{A}^n,\mO^{\alpha^m}_{T^*\mathbb{A}^n})$ such that $f(z,w) \neq 0$.\\

($\Rightarrow$) Suppose that $(z,w) \in \mu^{-1}(\lambda)$ then by the non-vanishing hypothesis on $f$, $a_i = 0$ for all $i$ such that $z_i = 0$ and $b_i=0$ for all $i$ such that $w_i = 0$.  The hypothesis that $f \in \mO^{\alpha^m}$ yields that $m \alpha = \sum_{z_i \neq 0} a_i \iota^*\chi_i^{\checkmark} - \sum_{w_i \neq 0} b_i \iota^*\chi_i^{\checkmark}$.   This implies that $\alpha \in \sum_{z_i \neq 0} \mathbb{N}\iota^*\chi_i^{\checkmark} - \sum_{w_i \neq 0} \mathbb{N}\iota^*\chi_i^{\checkmark}$.

($\Leftarrow$) Suppose that $\alpha \in \sum_{z_i \neq 0} \mathbb{N}\iota^*\chi_i^{\checkmark} - \sum_{w_i \neq 0} \mathbb{N}\iota^*\chi_i^{\checkmark}$.  Write $\alpha = \sum_{z_i \neq 0} a_i \iota^*\chi_i^{\checkmark} - \sum_{w_i \neq 0} b_i \iota^*\chi_i^{\checkmark}$. Consider the function $f = \prod_{i=1}^n x_i^{a_i} \xi_i^{b_i}$.  By construction $f(z,w) \neq 0$ and $f \in \mO^{\alpha^m}$.
\end{proof}

\begin{df}\label{walldf}
Let $I \subset \{0,1,...,n\}$ with the property $dim(\mathfrak{t}_I \cap \mathfrak{k})=1$ then define the wall of $I$, $W_I \subset \mathfrak{k}^*$, to be:

$$W_I = \{ \vec{v} | \langle \vec{v}, \mathfrak{k} \cap \mathfrak{t}_I \rangle = 0 \}$$

It clearly has dimension $k-1$.
\end{df}
Since the base field is infinite (it is algebraically closed), $\mathfrak{k} \neq \cup_{I} W_I$ as no vector space over an infinite field can be a finite union of its subspaces.  This fact, combined with the next theorem, proves that it rather common for all points in $\mu^{-1}(\lambda)^{\alpha-ss}$ to have finite stabilizers for the action of $K$.

\begin{thm}  $(d\alpha, \lambda) \notin \cup_I W_I \times W_I$ if and only if every point in $\mu^{-1}(\lambda)^{\alpha-ss}$ has a finite stabilizer for the action of $K$.
\end{thm}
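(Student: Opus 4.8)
The plan is to reduce the statement to combinatorics of coordinate subtori. For $(z,w)\in\mu^{-1}(\lambda)$ set $J(z,w)=\{\,i : z_i=w_i=0\,\}$; the computation in the proof of \ref{finitepro} identifies the Lie algebra of the $K$-stabilizer of $(z,w)$ with $\mathfrak{t}_{J(z,w)}\cap\mathfrak{k}$, so $(z,w)$ has infinite $K$-stabilizer precisely when $\mathfrak{t}_{J(z,w)}\cap\mathfrak{k}\neq 0$. Thus the theorem amounts to: $\mu^{-1}(\lambda)^{\alpha-ss}$ contains a point $(z,w)$ with $\mathfrak{t}_{J(z,w)}\cap\mathfrak{k}\neq 0$ if and only if $(d\alpha,\lambda)\in W_I\times W_I$ for some $I$ with $\dim(\mathfrak{t}_I\cap\mathfrak{k})=1$. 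I will prove each implication by exhibiting the relevant index set or the relevant point.

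For one implication, suppose $(z,w)\in\mu^{-1}(\lambda)^{\alpha-ss}$ has $\mathfrak{t}_J\cap\mathfrak{k}\neq 0$, $J=J(z,w)$. Choose $I\subseteq J$ minimal with $\mathfrak{t}_I\cap\mathfrak{k}\neq 0$; since $\mathfrak{t}_I$ is spanned by the standard $e_i$, $i\in I$, deleting an index lowers $\dim\mathfrak{t}_I$ by one and hence $\dim(\mathfrak{t}_I\cap\mathfrak{k})$ by at most one, so minimality forces $\dim(\mathfrak{t}_I\cap\mathfrak{k})=1$ and $W_I$ of \ref{walldf} is defined. Now $\mu(z,w)\in\mathfrak{k}^*$ is the restriction to $\mathfrak{k}$ of $\sum_{i\notin J}z_iw_i\,e_i^{\checkmark}\in\mathfrak{t}^*$, which vanishes on $\mathfrak{t}_J\supseteq\mathfrak{t}_I\cap\mathfrak{k}$; hence $\lambda=\mu(z,w)$ vanishes on $\mathfrak{t}_I\cap\mathfrak{k}$, i.e.\ $\lambda\in W_I$. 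By \ref{alphalem}, $\alpha$ lies in the subgroup of $X^*(K)$ generated by the $\iota^*\chi_i^{\checkmark}$ with $z_i\neq 0$ or $w_i\neq 0$, hence with $i\notin J$; differentiating, $d\alpha$ lies in the $\boldk$-span of the functionals $\iota^*(d\chi_i^{\checkmark})$ for $i\notin J$, and each of these vanishes on $\mathfrak{t}_I\cap\mathfrak{k}\subseteq\mathfrak{t}_I$ because $i\notin J\supseteq I$. Therefore $d\alpha\in W_I$ and $(d\alpha,\lambda)\in W_I\times W_I$.

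For the converse, assume $(d\alpha,\lambda)\in W_I\times W_I$; passing to a minimal $I$ with $\mathfrak{t}_I\cap\mathfrak{k}\neq 0$ and using $W_I\subseteq W_{I'}$ for $I'\subseteq I$, we keep $\dim(\mathfrak{t}_I\cap\mathfrak{k})=1$. I want a point $(z,w)\in\mu^{-1}(\lambda)^{\alpha-ss}$ with $z_i=w_i=0$ for all $i\in I$, which automatically gives $\mathfrak{t}_{J(z,w)}\cap\mathfrak{k}\supseteq\mathfrak{t}_I\cap\mathfrak{k}\neq 0$. Observe that $W_I$ is exactly the image of $\{\phi\in\mathfrak{t}^*:\phi|_{\mathfrak{t}_I}=0\}$ under restriction to $\mathfrak{k}$ — a functional on $\mathfrak{k}$ extends to one killing $\mathfrak{t}_I$ iff it kills $\mathfrak{k}\cap\mathfrak{t}_I$ — so $\lambda\in W_I$ yields a lift $\widetilde\lambda=\sum_{i\notin I}\widetilde\lambda_i\,e_i^{\checkmark}$ of $\lambda$; choosing $z_i,w_i$ with $z_iw_i=\widetilde\lambda_i$ for $i\notin I$ puts $(z,w)$ in $\mu^{-1}(\lambda)$. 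The remaining freedom in the pairs $(z_i,w_i)$, $i\notin I$ (which of the two is nonzero when $\widetilde\lambda_i=0$) is then used to satisfy the semistability criterion of \ref{alphalem}: one must write $\alpha$ as a $\mathbb{Z}$-combination $\sum_{i\notin I}m_i\,\iota^*\chi_i^{\checkmark}$ whose sign pattern is realizable, and the hypothesis $d\alpha\in W_I$ is what puts $\alpha$ in the relevant subgroup, via the exact sequences $0\to\mathfrak{k}\to\mathfrak{t}\to\mathfrak{h}\to 0$ and $0\to X_*(K)\to X_*(T)\to X_*(H)\to 0$ and their duals, after reduction mod $p$.

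The main obstacle is this last step of the converse: reconciling the $\mathbb{Z}$-linear condition on $\alpha$ coming out of \ref{alphalem} with the $\boldk$-linear condition $d\alpha\in W_I$. Because reduction mod $p$ on character lattices is not injective, one has to be careful about which lift $\widetilde\lambda$ of $\lambda$ is used and about the precise normalization of $\iota^*\chi_i^{\checkmark}$ and of the moment map fixed in the appendix; everything else is bookkeeping with linear subspaces and with the already-established \ref{finitepro} and \ref{alphalem}, and the first implication is essentially formal.
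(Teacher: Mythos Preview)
Your argument for the direction ``point with infinite stabilizer $\Rightarrow (d\alpha,\lambda)\in W_I\times W_I$'' is correct and is exactly what the paper does: pass from $J=J(z,w)$ to a minimal $I\subseteq J$ with $\dim(\mathfrak t_I\cap\mathfrak k)=1$, then read off $\lambda\in W_I$ from the shape of $\mu$ and $d\alpha\in W_I$ from \ref{alphalem}.

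For the converse you flag the right obstruction, and it is genuine. The paper handles this direction via the lemma immediately following the theorem, whose proof opens by asserting, for $I$ minimal with $d\alpha\in W_I$, that ``$\alpha$ has the expansion $\alpha=\sum_{i\in I^c}\alpha_i\,\iota^*\chi_i^{\checkmark}$'' with integer $\alpha_i$. In other words, the paper silently promotes the $\boldk$-linear condition $d\alpha\in W_I$ to the $\mathbb Z$-linear statement that $\alpha$ lies in the image of $\sum_{i\in I^c}\mathbb Z\chi_i^{\checkmark}$. Over a field of characteristic $0$ this is harmless, since $d$ is injective on characters; in characteristic $p$ it is exactly the step you were unable to justify, and it fails: if $\langle\alpha,n_I\rangle$ is a nonzero multiple of $p$ (with $n_I$ a generator of $X_*(K)\cap X_*(T)_I$) then $d\alpha\in W_I$ without $\alpha$ lying in that sublattice. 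So the gap you identify is not merely a gap in your write-up; it is a gap in the paper's argument, and the biconditional as stated is false over $\boldk$.

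A concrete failure: take $n=3$, $K=\{(s,t,st)\}\subset\mathbb G_m^3$, so that $\iota^*\chi_1^{\checkmark}=(1,0)$, $\iota^*\chi_2^{\checkmark}=(0,1)$, $\iota^*\chi_3^{\checkmark}=(1,1)$ in $X^*(K)\cong\mathbb Z^2$; put $\alpha=(p,1)$ and $\lambda=0$. Then $d\alpha=(0,1)\in W_{\{1,3\}}$, so $(d\alpha,\lambda)\in W_{\{1,3\}}\times W_{\{1,3\}}$. But a semistable point with infinite $K$-stabilizer would need $J(z,w)$ to contain one of $\{1,2\}$, $\{1,3\}$, $\{2,3\}$, and in each case \ref{alphalem} forces $\alpha$ into $\mathbb Z(1,1)$, $\mathbb Z(0,1)$, or $\mathbb Z(1,0)$ respectively, none of which contains $(p,1)$. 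Hence every point of $\mu^{-1}(0)^{\alpha\text{-ss}}$ has finite stabilizer, contradicting the ``if'' direction. The honest fix is to replace $d\alpha\in W_I$ by the integral condition $\alpha\in\iota^*\bigl(\sum_{i\in I^c}\mathbb Z\chi_i^{\checkmark}\bigr)$; with that change the paper's explicit construction of $(z,w)$ (and your sketch) goes through verbatim.
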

\begin{proof}\
Proceed by proving the contraposition, $(d\alpha, \lambda) \in \cup_I W_I \times W_I$ if and only if there is a point in $\mu^{-1}(\lambda)^{\alpha-ss}$ which has an infinite stabilizer for the action of $K$.\\

($\Rightarrow$) Let $(z,w) \in \mu^{-1}(\lambda)^{\alpha-ss}$ and let $I = \{i | z_i=w_i=0\}$.  Suppose that $\mathfrak{t}_I \cap \mathfrak{k} \neq 0$.  By basic linear algebra there must be a set $I' \subset I$ for which $\mathfrak{t}_{I'} \cap k$ is has dimension $1$.  By \ref{alphalem},

\begin{eqnarray*}
d\alpha \in Span_{\boldk}\{\iota^*e_i^{\checkmark}|i \in I^{c} \} & \subset & \{\iota^*e_i^{\checkmark} | i \in I^{'c}\}\\
									& = & \{ \vec{v} \in \mathfrak{k}^* | \langle \vec{v},\mathfrak{t}_{I'} \rangle = 0 \}\\
									& = & W_{I'}
\end{eqnarray*}

It must be that $\lambda = \mu(z,w) = \sum_{i=0}^n z_iw_i \iota^*e_i = \sum_{i \in I^{c}}z_i w_i \iota^*e_i \in Span_{\boldk}\{\iota^*e_i | i \in I^{'c} \}$.  Implying that $(\alpha,\lambda) \in W_{I'} \times W_{I'}$.\\

($\Leftarrow$) Suppose that $(d\alpha, \lambda) \in W_I \times W_I$, and assume that $I$ is minimal (see comment below).  By the next lemma, there exists a point $(z,w) \in \mu^{-1}(\lambda)^{\alpha-ss}$ such that $I = \{i|z_i=w_i=0\}$ and $\mathfrak{k} \cap \mathfrak{t}_I \neq 0$.  Hence, $dim(K_{(z,w)}) \geq 1$.
\end{proof}

Set $Q_{\alpha,\lambda} = \{ I | \exists (z,w) \in \mu^{-1}(\lambda)^{\alpha-ss} \text{ such that } I=\{i | z_i=w_i=0\} \}$. Set $P_K = \{I | dim(\mathfrak{k} \cap \mathfrak{t}_I) = 1 \}$   It is a partially ordered set with respect to inclusion.  Notice that if $I' \subset I$ in $P_K$ then $W_I = W_{I'}$ so in what follows one might consider defining $P_K$ to contain only the minimal elements.

\begin{lem} If $I \in P_K$ is a minimal element and $d\alpha \in W_I$, then there exists $(z,w) \in \mu^{-1}(\lambda)^{\alpha-ss}$ such that $I = \{i | z_i=w_i=0\}$
\end{lem}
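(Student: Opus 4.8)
The plan is to exhibit the point by hand. Any candidate $(z,w)$ with vanishing locus containing $I$ must have $z_i=w_i=0$ for $i\in I$, and then $\mu(z,w)=\sum_{i\notin I}z_iw_i\,\iota^{*}e_i$ lies in $\mathrm{span}_{\boldk}\{\iota^{*}e_i:i\notin I\}$. A short linear-algebra computation identifies this span with $W_I$: it is contained in $W_I$ because the generator $v=\sum_{i\in I}v_ie_i$ of the line $\mathfrak{t}_I\cap\mathfrak{k}$ is supported on $I$, and it has dimension $k-1=\dim W_I$ because $\mathrm{span}\{e_i^{\checkmark}:i\notin I\}+\mathfrak{h}^{*}$ has codimension $\dim(\mathfrak{t}_I\cap\mathfrak{k})=1$ in $\mathfrak{t}^{*}$ (its annihilator is $\mathfrak{t}_I\cap\mathfrak{k}$) and $\iota^{*}$ is the quotient $\mathfrak{t}^{*}\to\mathfrak{k}^{*}=\mathfrak{t}^{*}/\mathfrak{h}^{*}$. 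In particular the statement is non-vacuous only when $\lambda\in W_I$, which I assume henceforth (this is exactly the situation in which the lemma is invoked in the preceding theorem). So first I would pick scalars $c_i$ ($i\notin I$) with $\sum_{i\notin I}c_i\,\iota^{*}e_i=\lambda$.

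Next I would promote the $c_i$ to an honest point. Minimality of $I$ forces $v_i\neq0$ for every $i\in I$, and from this one checks that for each $j\notin I$ with $e_j\notin\mathfrak{k}$ one has $\iota^{*}e_j\in\mathrm{span}\{\iota^{*}e_i:i\notin I,\ i\neq j\}$ (removing $j$ from $I$ does not change $\dim(\mathfrak{t}_{I\cup\{j\}}\cap\mathfrak{k})=1$), so the coefficient $c_j$ may be perturbed by adding a linear relation; hence the affine solution set of $\sum_{i\notin I}c_i\,\iota^{*}e_i=\lambda$ is not contained in any of the coordinate hyperplanes $\{c_j=0\}$ over those $j$, and since $\boldk$ is infinite there is a solution with all such $c_j\neq0$, giving $z_j,w_j\neq0$ there. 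For the finitely many exceptional indices $j\notin I$ with $e_j\in\mathfrak{k}$ (if any), $c_j$ is rigid; there I instead set one of $z_j,w_j$ equal to $1$ and the other to $0$. In every case $\{i:z_i=w_i=0\}=I$ exactly and $\mu(z,w)=\lambda$.

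It remains to fix the remaining signs so that $(z,w)\in\mu^{-1}(\lambda)^{\alpha-ss}$, which by \ref{alphalem} means $\alpha\in\sum_{z_i\neq0}\mathbb{N}\,\iota^{*}\chi_i^{\checkmark}-\sum_{w_i\neq0}\mathbb{N}\,\iota^{*}\chi_i^{\checkmark}$, the sums running over $i\notin I$. This is where $d\alpha\in W_I$ is used: it forces $\alpha$ into the subgroup generated by $\{\iota^{*}\chi_i^{\checkmark}:i\notin I\}$ (once the characteristic-$p$ issues noted below are handled), and writing $\alpha$ as a difference of two $\mathbb{N}$-combinations of these generators dictates which of the $j\notin I$ should have $z_j\neq0$ and which should have $w_j\neq0$. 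The non-exceptional indices already have \emph{both} coordinates nonzero, which is never an obstruction since it only enlarges both index sets appearing in the criterion; the exceptional indices are assigned according to the sign with which $\iota^{*}\chi_j^{\checkmark}$ occurs.

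The main obstacle, and the part requiring real care, is the passage from the hypothesis $d\alpha\in W_I\subseteq\mathfrak{k}^{*}$ (a statement over $\boldk$) to the integral statement about $\alpha$ lying suitably in $\sum_{i\notin I}\mathbb{Z}\,\iota^{*}\chi_i^{\checkmark}$ that \ref{alphalem} actually needs --- controlling both torsion in $X^{*}(H)$ and the kernel $pX^{*}(K)$ of $d$ --- together with the bookkeeping that makes the sign assignment for the rigid exceptional indices consistent with the constraints already placed on their moment-map coefficients.
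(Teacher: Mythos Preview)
Your perturbation step has a genuine error. You claim that whenever $j\notin I$ and $e_j\notin\mathfrak{k}$ the dimension $\dim(\mathfrak{t}_{I\cup\{j\}}\cap\mathfrak{k})$ stays equal to $1$, so that $\iota^*e_j^{\checkmark}$ lies in $\mathrm{span}\{\iota^*e_i^{\checkmark}:i\notin I,\,i\neq j\}$ and $c_j$ can be moved off zero. This is false: take $n=3$, $\mathfrak{k}=\mathrm{span}\{e_1+e_2,\,e_1+e_3\}$, $I=\{1,2\}$ (which is minimal in $P_K$), and $j=3$. Then $e_3\notin\mathfrak{k}$, yet $\mathfrak{t}_{\{1,2,3\}}\cap\mathfrak{k}=\mathfrak{k}$ is two-dimensional, and $\iota^*e_3^{\checkmark}\neq 0$ is the \emph{only} generator of $W_I$. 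Your dichotomy into exceptional and non-exceptional indices is therefore incomplete; when $\lambda=0$ in this example there is no way to arrange $c_3\neq 0$, while your fallback prescription (one coordinate $1$, the other $0$) is never triggered since $j=3$ is not exceptional by your definition.

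The paper sidesteps this with a more direct construction that needs no perturbation: choose a lift $\tilde\lambda\in\mathfrak{t}^*$ of $\lambda$ supported on $I^c$ (possible precisely because $\lambda\in W_I$, a hypothesis you rightly make explicit), and for each $i\in I^c$ set $z_iw_i=\tilde\lambda_i$ when that is nonzero, otherwise set $(z_i,w_i)\in\{(1,0),(0,1)\}$ according to the sign of the $i$-th coordinate of a lift $\tilde\alpha$. The components of a single lift do all the work. You are, on the other hand, correct to flag the passage from $d\alpha\in W_I$ to an integral expansion $\alpha=\sum_{i\in I^c}\alpha_i\,\iota^*\chi_i^{\checkmark}$ as the delicate point: the paper asserts this expansion without argument, and since $\ker(d:X^*(K)\to\mathfrak{k}^*)=pX^*(K)$ in characteristic $p$, it is not automatic.
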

\begin{proof}
When $I$ is minimal, $W_I$ is spanned by $\{ \iota^*e_i | i \in I^{c} \}$.  $\alpha$ has the expansion,  $$\alpha = \sum_{i \in I^{c}}\alpha_i \iota^*\chi_i^{\checkmark} = \sum_{\substack{ i \in I^{c} \\ \alpha_i \geq 0}} \alpha_i \iota^*\chi_i^{\checkmark} + \sum_{\substack{ i \in I^{c} \\ \alpha_i < 0}} \alpha_i \iota^*\chi_i^{\checkmark}$$

This yields that $d\alpha = \sum_{\substack{ i \in I^{c} \\ \alpha_i \geq 0}} \alpha_i \iota^*e_i^{\checkmark} + \sum_{\substack{ i \in I^{c} \\ \alpha_i < 0}} \alpha_i \iota^*e_i^{\checkmark}$

Let $\tilde{\alpha}$ be some lift of $\alpha$ to $X^*(T)$.\\

Break into cases:\\
If $i \in I^{c}$ is such that $\langle \tilde{\lambda},e_i \rangle \neq 0$ then find any two numbers $z_i$ and $w_i$ such that $z_iw_i=\langle \tilde{\lambda},e_i \rangle$.\\
If $i \in I^{c}$, $\langle \tilde{\lambda},e_i \rangle = 0$ and $\langle d\tilde{\alpha},e_i \rangle \geq 0$ then choose $z_i=1$ and $w_i=0$.\\
If $i \in I^{c}$, $\langle \tilde{\lambda},e_i \rangle = 0$ and $\langle d\tilde{\alpha},e_i \rangle < 0 $ choose $z_i = 0$ and $w_i = 1$.\\
If $i \in I$, choose $z_i=w_i=0$.\\

It is clear that $I = \{i | z_i=w_i=0 \}$ and by construction, $\mu(z,w) = \lambda$ (since $\lambda \in W_I$) and $\alpha$ is in the monoid spanned over $\mathbb{N}$ by $\{ \iota^*e_i | z_i \neq 0\}$ and $\{ \iota^*e_i | w_i \neq 0 \}$.
\end{proof}

\begin{thm} $K$ acts freely on $\mu^{-1}(\lambda)^{\alpha-ss}$ if and only if $(\alpha,\lambda) \notin \cup_I W_I \times W_I$ and for all $I \in Q_{\alpha,\lambda}$ the set $\{A_i\}_{i \in I}$ is part of a $\mathbb{Z}$-basis for $X_*(H)$.
\end{thm}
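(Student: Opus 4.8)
The plan is to read freeness of the $K$-action on $\mu^{-1}(\lambda)^{\alpha-ss}$ as the pointwise statement that $K_{(z,w)}=\{e\}$ for every $(z,w)\in\mu^{-1}(\lambda)^{\alpha-ss}$, and then to feed each point into the two stabilizer criteria already established. For $(z,w)\in\mu^{-1}(\lambda)$ write $I(z,w)=\{i\mid z_i=w_i=0\}$, which in the conventions of this appendix is also $\{i\mid\mu(z,w)\in H_i\}$. Proposition \ref{finitepro} says $K_{(z,w)}$ is finite exactly when $\{a_i\}_{i\in I(z,w)}$ is $\boldk$-linearly independent, and Proposition \ref{zeropro} says $K_{(z,w)}$ is trivial exactly when $\{A_i\}_{i\in I(z,w)}$ is part of a $\mathbb{Z}$-basis of $X_*(H)$. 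Since $Q_{\alpha,\lambda}$ is by definition precisely $\{I(z,w)\mid(z,w)\in\mu^{-1}(\lambda)^{\alpha-ss}\}$, it is tailor-made to record exactly the index sets that can occur.

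For ``$\Leftarrow$'' I would take an arbitrary $(z,w)\in\mu^{-1}(\lambda)^{\alpha-ss}$, set $I=I(z,w)\in Q_{\alpha,\lambda}$, use the hypothesis that $\{A_i\}_{i\in I}$ is part of a $\mathbb{Z}$-basis of $X_*(H)$, and conclude $K_{(z,w)}=\{e\}$ via Proposition \ref{zeropro}; letting $(z,w)$ vary gives freeness. In fact the hypothesis $(\alpha,\lambda)\notin\bigcup_I W_I\times W_I$ is redundant for this direction, since membership of $\{A_i\}_{i\in I}$ in a $\mathbb{Z}$-basis of $X_*(H)$ forces the corresponding $a_i$ to be $\boldk$-linearly independent, hence every stabilizer finite by Proposition \ref{finitepro}, hence $(\alpha,\lambda)\notin\bigcup_I W_I\times W_I$ by the preceding theorem; it appears only to mirror the converse.

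For ``$\Rightarrow$'', assume $K$ acts freely. Then every $K_{(z,w)}$ with $(z,w)$ semistable is trivial, in particular finite, so the preceding theorem characterizing finiteness of all stabilizers yields $(\alpha,\lambda)\notin\bigcup_I W_I\times W_I$ (reading $\alpha$ as its differential, as in that theorem). Given $I\in Q_{\alpha,\lambda}$, pick, by definition of $Q_{\alpha,\lambda}$, a semistable $(z,w)$ with $I(z,w)=I$; triviality of $K_{(z,w)}$ together with Proposition \ref{zeropro} then shows $\{A_i\}_{i\in I}$ is part of a $\mathbb{Z}$-basis of $X_*(H)$, as required.

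Because the whole argument is an assembly of Propositions \ref{finitepro}--\ref{zeropro} with the finite-stabilizer theorem, I do not expect a genuine obstacle. The one point deserving care is the dictionary used at the outset: one should confirm that for $(z,w)\in\mu^{-1}(\lambda)$ the sets $\{i\mid z_i=w_i=0\}$ and $\{i\mid\mu(z,w)\in H_i\}$ agree, so that the index set fed to Proposition \ref{zeropro} is genuinely an element of $Q_{\alpha,\lambda}$, together with the elementary observation that being part of a $\mathbb{Z}$-basis of $X_*(H)$ passes to $\boldk$-linear independence of the images in $\mathfrak{h}$.
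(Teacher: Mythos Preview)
Your proposal is correct and follows essentially the same route as the paper: both directions are obtained by feeding each semistable point into Proposition~\ref{zeropro} and invoking the preceding finite-stabilizer theorem for the wall condition. Your additional remark that the hypothesis $(\alpha,\lambda)\notin\bigcup_I W_I\times W_I$ is redundant for the ``$\Leftarrow$'' direction (since a subset of a $\mathbb{Z}$-basis of $X_*(H)$ stays $\boldk$-independent in $\mathfrak{h}$) is a valid sharpening that the paper does not make explicit.
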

\begin{proof}
($\Rightarrow$) Suppose $K$ acts freely on $\mu^{-1}(\lambda)^{\alpha-ss}$, then every point has a finite stabilizer.  By the previous theorem, this implies that $(\alpha,\lambda) \notin \cup_I W_I \times W_I$.  For the second statement, consider \ref{zeropro} yields that $K_{(z,w)} =0$ if and only if $\{a_i | \mu(z,w) \in H_i\} = \{a_i | z_i=w_i=0\}$ is part of a $\mathbb{Z}$-basis.

($\Leftarrow$) The first condition guarantees that the stabilizer of any point is finite.  The second ensures that $\{A_i | \mu(z,w) \in H_i\}$ is part of a $\mathbb{Z}$-basis for $X_*(H)$.
\end{proof}

The next corollary explains that if the collection of hyperplanes in generic enough, it is common for $(K,\alpha,\lambda)$ to be smooth.
\begin{cor} If $(\alpha,\lambda) \notin \cup_{I} W_I \times W_I$ then
 \begin{enumerate}
  \item If $\mA$ is simple, $K$ acts with finite stabilizers on $\mu^{-1}(\lambda)^{\alpha-ss}$.
  \item If $\mA$ is smooth, $K$ acts freely on $\mu^{-1}(\lambda)^{\alpha-ss}$.
 \end{enumerate}

\end{cor}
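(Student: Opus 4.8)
The plan is to obtain both assertions from the two theorems of this subsection together with the pointwise stabilizer criteria \ref{finitepro} and \ref{zeropro}, with the genericity hypothesis $(\alpha,\lambda)\notin\bigcup_I W_I\times W_I$ doing all the ``numerical'' work and the conditions on $\mA$ supplying the remaining ``integrality''.

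For part (1) there is essentially nothing to prove beyond unwinding definitions: identifying $\alpha$ with $d\alpha\in\mathfrak{k}^*$, the hypothesis is exactly the condition of the theorem characterizing finiteness of $K$-stabilizers on $\mu^{-1}(\lambda)^{\alpha-ss}$, so that theorem immediately gives the conclusion. Via \ref{finitepro} this says equivalently that $\{a_i\}_{i\in I}\subset\mathfrak{h}$ is linearly independent for every $I\in Q_{\alpha,\lambda}$; simplicity of $\mA$ is the reason this is geometrically believable (the hyperplanes $H_i$, $i\in I$, all pass through $\mu(z,w)$, so $|I|\le\dim H$), and I would record this reformulation explicitly because part (2) uses it.

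For part (2) I would invoke the theorem characterizing free actions: since the hypothesis already supplies $(\alpha,\lambda)\notin\bigcup_I W_I\times W_I$, it remains only to verify that $\{A_i\}_{i\in I}\subset X_*(H)$ is part of a $\mathbb{Z}$-basis for each $I\in Q_{\alpha,\lambda}$. Fix such an $I$. By part (1) the vectors $\{a_i\}_{i\in I}$ are $\boldk$-linearly independent, and since the $a_i$ span $\mathfrak{h}$ I would extend $I$ to a set $J\supseteq I$ with $\{a_j\}_{j\in J}$ a $\boldk$-basis of $\mathfrak{h}$. Then the linear system cutting out $\bigcap_{j\in J}H_j$ has a unique solution, so this intersection is a single (hence non-trivial) point, and the smoothness hypothesis on $\mA$ forces $\{A_j\}_{j\in J}$ to be a $\mathbb{Z}$-basis of $X_*(H)$; therefore $\{A_i\}_{i\in I}$, being a subset of it, is part of a $\mathbb{Z}$-basis. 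Feeding this into the freeness theorem (or directly into \ref{zeropro}) completes part (2).

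The routine content is the two appeals to the structure theorems of this subsection; the one step I expect to need genuine care is the extension $I\subseteq J$ in part (2) — one must enlarge the independent family $\{a_i\}_{i\in I}$ to a basis using normals that actually occur among the $a_j$ (so that the enlarged collection of hyperplanes is covered by the smoothness hypothesis), and one must keep track of the $\mathbb{Z}$-linear versus $\boldk$-linear algebra, using torsion-freeness of $X_*(H)$ to pass back and forth. This is exactly the place where smoothness of $\mA$, rather than mere simplicity, enters.
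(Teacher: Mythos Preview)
Your argument is correct and is precisely the route the paper intends; the corollary is stated there without proof, immediately after the two characterization theorems, and your deduction --- part (1) directly from the finiteness theorem, part (2) by extending $I$ to a set $J$ with $\{a_j\}_{j\in J}$ a $\boldk$-basis of $\mathfrak{h}$ so that smoothness of $\mA$ makes $\{A_j\}_{j\in J}$ a $\mathbb{Z}$-basis of $X_*(H)$ --- is the natural way to fill in the omitted details. Your side observation that simplicity of $\mA$ is not formally needed for part (1) is correct as well, and the caution you flag about $\mathbb{Z}$- versus $\boldk$-linear algebra in part (2) is in fact unnecessary: the smoothness hypothesis hands you the $\mathbb{Z}$-basis statement directly, with no passage through torsion-freeness required.
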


\section{Acknowledgments}

The author benefited from conversations with Sam Gunningham and Michael Skirvin.  Correspondences with Ben Webster assisted with the combinatorial condition found in \ref{weakconj}.  He also provided helpful comments on an earlier version of this paper.  The author is indebted to David Treumann, most of all for his outstanding mentorship, but also for the reformulation contained in \ref{cart} and his suggestion to expand the concepts of \cite{MVdB}, \cite{BeK}, and \cite{PW} to positive characteristic.  The author would also like to extend deep thanks to his advisor David Nadler for years of continual support, assistance and encouragement, with a special gratitude for cultivating the author's interest in geometric representation theory.  This work grew from the pursuit of \ref{smainthm}, which is a question that he posed to the author.

Department of Mathematics, Northwestern University, 2033 Sheridan Road, Evanston, IL 60202\\
\textit{E-mail: tstadnik@math.northwestern.edu}\\
\end{document}